\documentclass[11pt,reqno]{amsart}
\setlength{\textheight}{23cm}
\setlength{\textwidth}{16cm}
\setlength{\topmargin}{-0.8cm}
\setlength{\parskip}{0.3\baselineskip}
\hoffset=-1.4cm

\usepackage{diagrams}
\usepackage{amssymb}
\usepackage{cite}
\usepackage[pagewise]{lineno}
\numberwithin{equation}{section}

\theoremstyle{definition}
\newtheorem{thm}{Theorem}[section]
\newtheorem{cor}[thm]{Corollary}
\newtheorem{lem}[thm]{Lemma}

\newtheorem{prop}[thm]{Proposition}

\newtheorem{rem}[thm]{Remark}
\newtheorem{note}[thm]{Notation}

\DeclareMathOperator{\Hc}{\mathcal{H}om}

\DeclareMathOperator{\Ima}{\mathrm{Im}}

\DeclareMathOperator{\p3}{\mathbb{P}^3}

\DeclareMathOperator{\pr}{\mathrm{pr}}

\DeclareMathOperator{\mo}{\mathcal{O}}

\newcommand{\mr}[1]{\mathrm{#1}}
\newcommand{\mb}[1]{\mathbb{#1}}
\newcommand{\mbf}[1]{\mathbf{#1}}
\newcommand{\mc}[1]{\mathcal{#1}}
\newcommand{\ov}[1]{\overline{#1}}

\newcommand{\mf}[1]{\mathfrak{#1}}


\begin{document}

\title[Degeneration of Torelli theorem]
{Degeneration of intermediate Jacobians and the Torelli theorem}

\author[S. Basu]{Suratno Basu}

\address{Institute of Mathematical Sciences, HBNI, CIT Campus, Tharama
ni, Chennai 600113, India}

\email{suratnob@imsc.res.in}

\author[A. Dan]{Ananyo Dan}

\address{BCAM - Basque Centre for Applied Mathematics, Alameda de Mazarredo 14,
48009 Bilbao, Spain}

\email{adan@bcamath.org}

\author[I. Kaur]{Inder Kaur}

\address{Instituto de Matem\'{a}tica Pura e Aplicada, Estr. Dona Castorina, 110 - Jardim Bot\^{a}nico, Rio de Janeiro - RJ, 22460-320, Brazil}

\email{inder@impa.br}

\subjclass[2010]{Primary $14$C$30$, $14$C$34$, $14$D$07$, $32$G$20$, $32$S$35$, $14$D$20$, Secondary $14$H$40$}

\keywords{Torelli theorem, intermediate Jacobians, N\'{e}ron models, nodal curves, Gieseker moduli space, limit mixed Hodge structures}

\date{\today}

\begin{abstract}
 Mumford and Newstead generalized the classical Torelli theorem to higher rank i.e., a smooth, projective curve $X$ 
is uniquely determined by the second intermediate Jacobian of the moduli space of stable rank $2$ bundles on $X$, with fixed 
odd degree determinant.
 In this article we prove the analogous result in the case $X$ is an irreducible nodal curve with one node.
 As a byproduct, we obtain the degeneration of the second intermediate Jacobians and the 
 associated N\'{e}ron model of a family of such moduli spaces.
\end{abstract}

\maketitle

\section{Introduction}
Throughout this article the underlying field will be $\mb{C}$.
Recall, that given a smooth, projective variety $Y$, the $k$-th \emph{intermediate Jacobian of} $Y$, denoted $J^k(Y)$ is defined as:
 \[J^k(Y):=\frac{H^{2k-1}(Y,\mb{C})}{F^kH^{2k-1}(Y,\mb{C})+H^{2k-1}(Y,\mb{Z})},\]
 where $F^\bullet$ denotes Hodge filtration.
 
In $1913$, Torelli proved that a smooth, projective curve $X$ is uniquely determined by its Jacobian variety $J^1(X)$, along with 
 the polarization on $J^1(X)$ induced by a non-degenerate, integer valued symplectic pairing on $H^1(X,\mb{Z})$ (see \cite{An, To}). 
 Since then several Torelli type theorems for smooth, projective curves have been proven (see for example \cite{OS, We, MO}). 
 Generalizations of Torelli's theorem for singular curves have also been investigated, notably by 
 Mumford \cite{mum2}, Namikawa \cite{nami}, Alexeev \cite{alex1, alex2}, Caporaso and Viviani \cite{cap} and more recently by Rizzi and Zucconi \cite{RZ}.
 In a different direction, Mumford and Newstead in \cite{mumn} generalized the classical Torelli to higher rank. More precisely, they proved 
 that a smooth, projective curve $X$ is uniquely determined by the moduli space of rank $2$ stable vector bundles with fixed odd degree determinant on $X$.
Recently, Basu \cite{basu1}
 proved an analogous result for the case when the curve is reducible with two smooth components meeting at a simple node. 
 In this article we prove the higher rank Torelli theorem in the case the curve is an irreducible nodal curve. More precisely, we prove:
 \begin{thm}[See Theorem \ref{tor30}]\label{tor32}
  Let $X_0$ and $X_1$ be projective, irreducible nodal curves of genus $g \ge 4$ with exactly one 
  node such that the normalizations $\widetilde{X}_0$ and $\widetilde{X}_1$ are not 
  hyper-elliptic. Let $\mc{L}_0$ and $\mc{L}_1$ be invertible sheaves of odd degree on $X_0$ and $X_1$, respectively.
  Denote by $\mc{G}_{X_0}(2,\mc{L}_0)$ (resp. $\mc{G}_{X_1}(2,\mc{L}_1)$) the Gieseker moduli space of rank $2$ semi-stable sheaves with determinant 
  $\mc{L}_0$ (resp. $\mc{L}_1$) on curves semi-stably equivalent to $X_0$ (resp. $X_1$).
  If $\mc{G}_{X_0}(2,\mc{L}_0)$ is isomorphic to $\mc{G}_{X_1}(2,\mc{L}_1)$, then $X_0 \cong X_1$.
 \end{thm}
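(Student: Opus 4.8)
The plan is to recover the curve $X_0$ from the intrinsic geometry of the Gieseker moduli space $\mc{G}_{X_0}(2,\mc{L}_0)$, following the Mumford--Newstead strategy adapted to the degenerate setting. The first step is to understand the cohomology of $\mc{G}_{X_0}(2,\mc{L}_0)$: this space is reducible, with one component being a desingularization of the Narasimhan--Ramanan moduli space $M_{X_0}(2,\mc{L}_0)$ of semistable sheaves on $X_0$ itself, and the other components parametrizing sheaves on semistable models of $X_0$ (curves obtained by bubbling off a $\mathbb{P}^1$ at the node). I would place $\mc{G}_{X_0}(2,\mc{L}_0)$ in a flat family $\mc{G} \to \Delta$ over a disc, whose generic fibre is the smooth moduli space $M_{X_t}(2,\mc{L}_t)$ of a smoothing $X_t$ of $X_0$. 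The key computation is of the limit mixed Hodge structure on $H^3$ of the generic fibre, and the comparison with $H^3(\mc{G}_{X_0}(2,\mc{L}_0))$ via the Clemens--Schmid sequence. From Mumford--Newstead one knows that for smooth $X_t$ there is an isomorphism of Hodge structures $H^3(M_{X_t}(2,\mc{L}_t),\mb{Z}) \cong H^1(X_t,\mb{Z})$, identifying $J^2(M_{X_t}(2,\mc{L}_t))$ with $J^1(X_t)$ as principally polarized abelian varieties.

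The second step is the degeneration analysis. The N\'{e}ron model of the family of intermediate Jacobians $J^2(M_{X_t})$ over $\Delta^*$ should, by the above isomorphism and the known structure of the N\'{e}ron model of the family of Jacobians $J^1(X_t)$, have special fibre the generalized Jacobian of $X_0$ — a semi-abelian variety, extension of $J^1(\widetilde{X}_0)$ by $\mb{G}_m$. I would compute this from the limit MHS: the weight filtration on $\lim H^3$ has graded pieces $\gr^W_2 \cong \mb{Z}$ (the vanishing-cycle part) and $\gr^W_3 \cong H^1(\widetilde{X}_0,\mb{Z})$, exactly mirroring the curve case. The content here is to show that this limit MHS, together with its polarization, is recoverable from $\mc{G}_{X_0}(2,\mc{L}_0)$ alone — i.e. that $H^3$ of the Gieseker space computes $\gr^W$ of the limit, via the Clemens--Schmid exact sequence applied to a semistable family with $\mc{G}_{X_0}(2,\mc{L}_0)$ as a normal crossings central fibre. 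This gives us canonically the pair $\bigl(H^1(\widetilde{X}_0,\mb{Z}), \text{polarization}\bigr)$.

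The third step is to reconstruct $X_0$ from this data. The classical Torelli theorem applied to the recovered principally polarized Hodge structure $H^1(\widetilde{X}_0,\mb{Z})$ gives the normalization $\widetilde{X}_0$ (here the non-hyperelliptic hypothesis guarantees that $\widetilde{X}_0$, and not merely its Jacobian, is determined, and rules out the ambiguity in the classical Torelli map). It then remains to locate the two points of $\widetilde{X}_0$ that are glued to form the node of $X_0$. This should come from the extension data of the limit MHS: the class in $\Ext^1(\mb{Z}(-1), H^1(\widetilde{X}_0,\mb{Z}))$ — equivalently a point of $J^1(\widetilde{X}_0)$ — is, by the theory of the Jacobian of a nodal curve, precisely the image under the Abel--Jacobi map of the divisor $p_1 - p_2$ where $p_1,p_2$ are the preimages of the node. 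Since $g \ge 4$ and $\widetilde{X}_0$ is non-hyperelliptic, the difference map $\widetilde{X}_0 \times \widetilde{X}_0 \to J^1(\widetilde{X}_0)$ is generically injective, so $\{p_1,p_2\}$ is recovered up to the unavoidable swap, which is harmless. Gluing $p_1$ to $p_2$ reconstructs $X_0$.

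The main obstacle I anticipate is the second step: establishing that the limit mixed Hodge structure on $H^3$ of the family of moduli spaces genuinely has the predicted form — in particular that $\mc{G}_{X_0}(2,\mc{L}_0)$, despite being reducible and singular along the intersections of its components, behaves well enough (e.g. that the total space of the family admits a semistable model with controlled geometry, and that the Clemens--Schmid sequence applies) for $H^3(\mc{G}_{X_0}(2,\mc{L}_0))$ to recover $\gr^W_\bullet$ of the limit together with the monodromy weight filtration and the limiting polarization. This requires a careful analysis of the components of the Gieseker space and their pairwise intersections (which are themselves moduli-type spaces, bundles over products of moduli of bundles on $\widetilde{X}_0$), and a Mayer--Vietoris computation feeding into the weight spectral sequence. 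Once the limit MHS is pinned down, the Torelli reconstruction in step three is essentially formal.
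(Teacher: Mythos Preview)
Your proposal is correct and follows essentially the same route as the paper: place $\mc{G}_{X_0}(2,\mc{L}_0)$ as the simple-normal-crossings central fibre of a family over a disc, use Mumford--Newstead on the smooth fibres to identify the two variations of Hodge structure, compute the limit mixed Hodge structure via the weight spectral sequence (the paper does exactly the component-and-intersection analysis you flag as the main obstacle, culminating in the Gysin-kernel computation that forces $\mathrm{Gr}^W_4$ to be one-dimensional of type $(2,2)$), and conclude that the intrinsically defined $J^2(\mc{G}_{X_0}(2,\mc{L}_0))$ is isomorphic to the generalized Jacobian $J^1(X_0)$ as a semi-abelian variety. The only difference is packaging at the final step: the paper invokes Namikawa's Torelli theorem for irreducible nodal curves directly, whereas you unpack that argument by hand (classical Torelli for $\widetilde{X}_0$ from $\mathrm{Gr}^W_1 J^1(X_0)$, then the gluing points from the extension class) --- which is exactly how Namikawa's result is proved in the one-node case, so there is no substantive divergence.
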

 
 The non hyper-ellipticity assumption in the theorem comes from the fact that existence of non-trivial linear systems of degree $2$ on a curve 
 (equivalent to hyper-ellipticity of curves)
 prevents it from being uniquely determined by its \emph{generalized Jacobian} (see \cite[p. $247$]{nami}). Recall, generalized Jacobian of a curve $X$
 is the same as the $1$-st intermediate Jacobian of $X$ as defined above, with the mixed Hodge structure on $H^1(X,\mb{C})$. 
 As we need our curves to be uniquely determined by its generalized Jacobian, we use this additional hypothesis.

  One must note that the strategy of Mumford-Newstead \cite{mumn} or Basu \cite{basu1} cannot be used to prove the above theorem.
  In particular, the underlying curve $X$ studied in \cite{mumn} is smooth, projective, 
  whence the associated moduli space $M_{X}(2,\mc{L})$ of  rank $2$ stable vector bundles with 
  fixed odd degree determinant is smooth and equipped with an universal bundle. Using the Chern class of the universal bundle, Mumford and Newstead
 give a unimodular isomorphism of pure Hodge structures from $H^1(X,\mb{Z})$ to $H^3(M_{X}(2,\mc{L}),\mb{Z})$.
 The higher rank Torelli theorem for smooth, projective curves follows immediately from this observation.
 Clearly this approach fails in our setup (the relevant cohomology of $X_i$ and $\mc{G}_{X_i}(2,\mc{L}_i)$
is not pure). Instead, we use degeneration of Hodge structures to prove Theorem \ref{tor32}. Although Basu in \cite{basu1} also uses
variation of Hodge structures, it turns out that the associated monodromy vanishes,
thereby the resulting limit Hodge structure is pure (see \cite[Lemma $4.1$ and $4.3$]{basu1}). 
By comparison, the variation of Hodge structures in our setup has non-trivial monodromy (see Theorem \ref{tor17}), 
thereby the limit Hodge structure is not pure. Therefore, we need to use the theory of N\'{e}ron model of families of intermediate Jacobians (see \S \ref{sec4}).

The first step to prove Theorem \ref{tor32} is to obtain a relative version of the Mumford-Newstead theorem on a flat family of projective curves 
 of genus $g \ge 2$, over the unit disc $\Delta$, say
 \[\pi_1:\mc{X} \to \Delta,\]
 smooth over the punctured disc $\Delta^*$
  with central fiber $X_0$ an irreducible nodal curve with exactly one node. 
   Fix an invertible sheaf $\mc{L}$ on $\mc{X}$ of odd degree and let $\mc{L}_0:=\mc{L}|_{X_0}$. 
  Denote by $\pi_2:\mc{G}(2,\mc{L}) \to \Delta$ (resp. $\mc{G}_{X_0}(2,\mc{L}_0)$) the relative Gieseker moduli 
  space of rank $2$ semi-stable sheaves on families of curves semi-stably equivalent to $\mc{X}$ (resp. $X_0$) 
  with determinant $\mc{L}$ (resp. $\mc{L}_0$) (see Notation \ref{ner06}). 
  Now, $\mc{G}_{X_0}(2,\mc{L}_0)$ is not smooth (Theorem \ref{tor15}).
  Define the second \emph{intermediate Jacobian} of $\mc{G}_{X_0}(2,\mc{L}_0)$ to be 
 \[ J^2(\mc{G}_{X_0}(2,\mc{L}_0)):=\frac{H^3(\mc{G}_{X_0}(2,\mc{L}_0),\mb{C})}{F^2 H^3(\mc{G}_{X_0}(2,\mc{L}_0),\mb{C})+H^3(\mc{G}_{X_0}(2,\mc{L}_0),\mb{Z})},\]
 with the natural mixed Hodge structure on $H^3(\mc{G}_{X_0}(2,\mc{L}_0),\mb{C})$.
 Note that $J^2(\mc{G}_{X_0}(2,\mc{L}_0))$ is not an abelian variety. However, we show that in this case it is a semi-abelian variety.
  Since the families $\mc{X}$ and $\mc{G}(2,\mc{L})$ are smooth over the punctured disc $\Delta^*$, there exists
  a family of (intermediate) Jacobians $\mbf{J}^1_{\mc{X}_{\Delta^*}}$ (resp. $\mbf{J}^2_{\mc{G}(2,\mc{L})_{\Delta^*}}$) 
  over $\Delta^*$ associated to the family of curves $\mc{X}$ (resp. family of Gieseker moduli spaces $\mc{G}(2,\mc{L})$) restricted to $\Delta^*$
  i.e., for all $t \in \Delta^*$, the fibers 
  \[\left(\mbf{J}^1_{\mc{X}_{\Delta^*}}\right)_t \cong J^1(\mc{X}_t) \mbox{ and } \left(\mbf{J}^2_{\mc{G}(2,\mc{L})_{\Delta^*}}\right)_t \cong J^2(\mc{G}(2,\mc{L})_t),\]
  where $(-)_t$ denotes the fiber over $t$ of the family $(-)$.
 Using the isomorphism obtained by Mumford-Newstead
  (between $J^1(\mc{X}_t)$ and $J^2(\mc{G}(2,\mc{L})_t)$), we obtain an isomorphism of families of intermediate Jacobians over $\Delta^*$:
 \[\Phi:\mbf{J}^1_{\mc{X}_{\Delta^*}} \to \mbf{J}^2_{\mc{G}(2,\mc{L})_{\Delta^*}}.\]

 Our next goal is to extend the morphism $\Phi$ to the entire  disc $\Delta$.
 Clemens \cite{cle2} and Zucker \cite{zuck} show that under certain conditions there exist holomorphic, 
 canonical N\'{e}ron models $\ov{\mbf{J}}^1_{\widetilde{\mc{X}}}$ and $\ov{\mbf{J}}^2_{\mc{G}(2,\mc{L})}$ extending the 
 families of intermediate Jacobians  $\mbf{J}^1_{\mc{X}_{\Delta^*}}$ and $\mbf{J}^2_{\mc{G}(2,\mc{L})_{\Delta^*}}$
 respectively, to $\Delta$.
 We prove in Theorem \ref{tor17} that these conditions are satisfied.
 Although the construction of the N\'{e}ron model by Zucker differs from that by Clemens, 
 we prove in Theorem \ref{ner02} that they coincide in our setup (see Remark \ref{nmequiv}). Moreover, we prove:
  
 \begin{thm}\label{tor35}
  Notations as above. Denote by \[J^1(X_0):=\frac{H^1(X_0,\mb{C})}{F^1H^{1}(X_0,\mb{C})+H^{1}(X_0,\mb{Z})}\] the \emph{generalized Jacobian of} $X_0$,
  using the mixed Hodge structure on $H^1(X_0,\mb{C})$. Then,
  \begin{enumerate}
  \item there exist complex manifolds 
 $\ov{\mbf{J}}^1_{\widetilde{\mc{X}}}$ and $\ov{\mbf{J}}^2_{\mc{G}(2,\mc{L})}$ over $\Delta$ extending 
 $\mbf{J}^1_{\mc{X}_{\Delta^*}}$ and $\mbf{J}^2_{\mc{G}(2,\mc{L})_{\Delta^*}}$, respectively. Furthermore, the extension is canonical,
 \item the central fiber of $\ov{\mbf{J}}^1_{\widetilde{\mc{X}}}$ (resp. $\ov{\mbf{J}}^2_{\mc{G}(2,\mc{L})}$) is isomorphic to $J^1(X_0)$  
  (resp. $J^2(\mc{G}_{X_0}(2,\mc{L}_0))$). Furthermore, $J^1(X_0)$  and $J^2(\mc{G}_{X_0}(2,\mc{L}_0))$ are semi-abelian varieties,
   \item the isomorphism $\Phi$  extends holomorphically to an isomorphism   
 \[\ov{\Phi}: \ov{\mbf{J}}^1_{\widetilde{\mc{X}}} \to \ov{\mbf{J}}^2_{\mc{G}(2,\mc{L})},\]
 over the entire disc $\Delta$. Furthermore, the induced isomorphism on the central fiber is 
 an isomorphism of semi-abelian varieties i.e., the abelian part of $J^1(X_0)$ maps isomorphically to that of $J^2(\mc{G}_{X_0}(2,\mc{L}_0))$.
   \end{enumerate}
 \end{thm}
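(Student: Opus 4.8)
The three assertions will be established together by a careful analysis of the limit mixed Hodge structures (LMHS) on $H^1$ of the family of curves and on $H^3$ of the family of Gieseker moduli spaces. The plan is as follows.

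First, for part (1) and the first half of (2), I would invoke the Clemens–Zucker machinery: given a one-parameter degeneration over $\Delta$, smooth over $\Delta^*$, with unipotent monodromy on the relevant cohomology, one gets a canonical holomorphic Néron model extending the family of intermediate Jacobians, and — crucially — the existence of such a Néron model requires precisely that the weight filtration on the LMHS interacts correctly with the Hodge filtration (the "admissibility"/Zucker conditions). Theorem \ref{tor17} is cited to give us these conditions in both cases, so the existence of $\ov{\mbf{J}}^1_{\widetilde{\mc{X}}}$ and $\ov{\mbf{J}}^2_{\mc{G}(2,\mc{L})}$, together with canonicity and the identification of their central fibers with $J^1(X_0)$ and $J^2(\mc{G}_{X_0}(2,\mc{L}_0))$, follows from the construction in \cite{cle2, zuck} together with Theorem \ref{ner02} identifying the two constructions. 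The key point is that the central fiber of the Néron model is, by construction, the intermediate Jacobian built from the LMHS, and the LMHS on $H^1$ of the family of curves agrees with the mixed Hodge structure on $H^1(X_0)$ (local invariant cycle theorem plus the Clemens–Schmid sequence), so the central fiber is exactly the generalized Jacobian; likewise for the moduli space.

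Second, for the semi-abelian claim in (2): a mixed Hodge structure of weights in $\{0,1\}$ with $\mathrm{Gr}^W_1$ polarizable gives rise to a semi-abelian variety (extension of an abelian variety by a torus), so I would show the LMHS on $H^1(X_0)$ has weights $\{0,1\}$ — this is classical for an irreducible one-nodal curve, where $\mathrm{Gr}^W_1 H^1 \cong H^1(\widetilde X_0)$ and $\mathrm{Gr}^W_0 H^1 \cong \mb{Q}(0)$ from the single node — and then show the same for $H^3(\mc{G}_{X_0}(2,\mc{L}_0))$. For the latter I would either use the explicit description of the Gieseker moduli space and its cohomology (via Theorem \ref{tor15} and the structure of $\mc{G}_{X_0}(2,\mc{L}_0)$), or, more efficiently, transport the weight information across $\Phi$: since $\Phi$ is an isomorphism of the VHS over $\Delta^*$ (after the Mumford–Newstead identification, which shifts weights by $2$), the LMHS on $H^3$ is the Tate twist by $\mb{Q}(-1)$ of the LMHS on $H^1$, hence has weights $\{2,3\}$ and the associated intermediate Jacobian $J^2$ is again semi-abelian with the same abelian part.

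Third, for part (3): the isomorphism $\Phi$ over $\Delta^*$ is an isomorphism of the underlying VHS (of the appropriate weights and Tate twist), hence is horizontal and compatible with the Hodge and weight filtrations; by the functoriality/universal property of the canonical Néron model — any morphism of admissible VHS over $\Delta^*$ extends uniquely to a morphism of their Néron models over $\Delta$ — $\Phi$ extends to a holomorphic map $\ov\Phi$, and applying the same to $\Phi^{-1}$ shows $\ov\Phi$ is an isomorphism. On central fibers, $\ov\Phi$ is then an isomorphism of the semi-abelian varieties $J^1(X_0) \xrightarrow{\sim} J^2(\mc{G}_{X_0}(2,\mc{L}_0))$, and since it comes from an isomorphism of MHS it respects the weight filtration, hence carries $\mathrm{Gr}^W$ to $\mathrm{Gr}^W$; in particular the abelian parts (the images of $W_1$, resp. $W_2$) correspond.

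The main obstacle is the semi-abelianness of $J^2(\mc{G}_{X_0}(2,\mc{L}_0))$ and the precise identification of the LMHS on $H^3$ of the family of Gieseker moduli spaces — i.e. checking that the monodromy on $H^3$ is unipotent with the right weight behaviour and that no higher weights appear. Establishing this directly from the geometry of $\mc{G}_{X_0}(2,\mc{L}_0)$ would be delicate; the clean route is to reduce everything to the curve case via the relative Mumford–Newstead isomorphism $\Phi$ and the compatibility of LMHS with morphisms of VHS, which is where Theorem \ref{tor17} does the real work. I would therefore organise the argument so that all weight-theoretic statements on the moduli side are pulled back from the (well-understood) curve side through $\Phi$, and reserve the independent geometric input for verifying the hypotheses of the Clemens–Zucker construction in Theorem \ref{tor17}.
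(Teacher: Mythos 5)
Your plan follows the same architecture as the paper's proof (Clemens--Zucker models whose hypotheses come from Theorem \ref{tor17}; identification of central fibers via specialization; semi-abelianness established on the curve side and transported to the moduli side through the Mumford--Newstead isomorphism with a shift of weights by two; extension of $\Phi$ via canonical extensions and functoriality), but the step on which you base the identification of the central fibers is false as stated. The limit mixed Hodge structure on $H^1(\widetilde{\mc{X}}_\infty)$ does \emph{not} agree with the mixed Hodge structure on $H^1(X_0)$, and the limit on $H^3(\mc{G}(2,\mc{L})_\infty)$ does not have weights $\{2,3\}$: the monodromy here is nontrivial (this is exactly the contrast with the reducible case of \cite{basu1} stressed in the introduction), so $\dim H^1(\widetilde{\mc{X}}_\infty)=2g$ while $\dim H^1(X_0)=2g-1$, and the limit carries a one-dimensional $\mr{Gr}^W_2$ (resp. $\mr{Gr}^W_4$ on the moduli side). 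Moreover the special fiber of the N\'eron model is $H^1(\widetilde{\mc{X}}_\infty,\mb{C})$ modulo $F^1$ plus the \emph{monodromy-invariant} integral lattice, not the Jacobian of the full limit MHS. Hence the isomorphism with $J^1(X_0)$ is not a formal consequence of the local invariant cycle theorem; it requires precisely what the paper proves: $\mr{sp}_1$ identifies $H^1(X_0,\mb{Q})$ with $W_1H^1(\widetilde{\mc{X}}_\infty,\mb{Q})$ (Proposition \ref{t04}), the quotients mod $F^1$ agree because the extra graded piece $\mr{Gr}^W_2$ is pure of type $(1,1)$ (Theorem \ref{tor17}), and the invariant lattice is $\mr{sp}_1(H^1(X_0,\mb{Z}))$ (Corollary \ref{ner04}); similarly for $H^3$ with type $(2,2)$ and the fact that $H^3(\mc{G}_{X_0}(2,\mc{L}_0))$ has weights at most $3$. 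Your ``more efficient'' transport of weights across $\Phi$ is fine in spirit (and is how Corollary \ref{tor27} argues, via Proposition \ref{ner01} and Remark \ref{ner10}), but it yields limit weights $\{2,3,4\}$, and the semi-abelian conclusion again needs the purity of the top piece and the identification of the central-fiber cohomology with $W_3$ of the limit.

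A second point your proposal does not address is the integral obstruction governing part (1): the canonical, smooth extension (and the agreement of the Clemens and Zucker constructions, Remark \ref{nmequiv}) requires the vanishing of the finite groups $G_{\mc{X}_s}$ and $G_{\mc{G}(2,\mc{L})_s}$ comparing $(T-\mr{Id})H^*(\cdot,\mb{Q})\cap H^*(\cdot,\mb{Z})$ with $(T-\mr{Id})H^*(\cdot,\mb{Z})$. This is the heart of Theorem \ref{ner02}: on the curve side it follows from the Picard--Lefschetz formula together with the non-divisibility of the vanishing cycle (torsion-freeness of $H_1(X_0,\mb{Z})$), and it is carried over to the moduli side because the Mumford--Newstead isomorphism of integral local systems commutes with the monodromy operators. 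Quoting \cite{cle2} and \cite{zuck} alone does not supply this, and quoting Theorem \ref{ner02} wholesale is circular here, since the paper's proof of the present theorem consists exactly of Theorem \ref{ner02}, Remark \ref{nmequiv} and Corollaries \ref{ner04} and \ref{tor27}. With the corrected weight bookkeeping and the vanishing-cycle argument added, your outline becomes the paper's proof.
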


 See Theorem \ref{ner02}, Remark \ref{nmequiv} and Corollaries \ref{ner04} and \ref{tor27} for the proof.
 
 Finally, we use the Torelli theorem on generalized Jacobian of irreducible nodal curves by 
 Namikawa, to obtain the higher rank analogue as mentioned in Theorem \ref{tor32}.

 Note that in this article, we use Gieseker's relative moduli space of semi-stable sheaves with fixed determinant,
 since the central fiber of the moduli space is  a simple normal crossings divisor (see \cite[\S $6$]{tha}), this is not the case
 for Simpson's relative moduli space. 
 This is needed for computing the limit mixed Hodge structure using Steenbrink spectral sequences. In 
 recent years several authors have studied the algebraic and  geometric properties of the Gieseker's moduli space 
   (see for example \cite{bala3,barik2,RF}). We believe that Theorem \ref{tor32} holds for any number of nodes. However, the references we use 
  formulate their results in the one node case only, 
  although they claim that analogous results hold for several node case as well. 
  Therefore, for the sake of consistency 
  we also restrict to the one node case.

 The outline of the article is as follows: in \S $2$ we prove that the conditions for the existence of N\'{e}ron models are satisfied.
 In \S $3$, we obtain the associated N\'{e}ron models, mentioned above.
 In \S $4$, we study the central fibers of the N\'{e}ron models and prove the main results.

\vspace{0.2 cm}
\emph{Acknowledgements} 
We thank Prof. J. F. de Bobadilla for numerous discussions and Dr. B. Sigurdsson and Dr. S. Das for helpful suggestions.  The first author was funded by IMSC, Chennai postdoctoral fellowship.
The second author is currently supported by ERCEA Consolidator Grant $615655$-NMST and also
by the Basque Government through the BERC $2014-2017$ program and by Spanish
Ministry of Economy and Competitiveness MINECO: BCAM Severo Ochoa
excellence accreditation SEV-$2013-0323$.
The third author is funded by a postdoctoral fellowship from CNPq Brazil.
 A part of this work was 
done when the third author was visiting ICTP. She warmly thanks ICTP, the Simons Associateship and Prof. 
C. Araujo for making this possible.

 \section{Limit mixed Hodge structure on the relative moduli space}\label{sec3}
 
 In this section we compute the limit mixed Hodge structures and monodromy associated to 
 degeneration of curves and the corresponding Gieseker moduli space with fixed determinant, defined in Appendix \ref{sec2} (see Theorem \ref{tor17}).
 We assume familiarity with basic results on limit mixed Hodge structures. See \cite{pet} for reference.
  
 \begin{note}\label{tor33}
  Denote by $\pi_1:\mc{X} \to \Delta$ a family of projective curves of genus $g \ge 2$
  over the unit disc $\Delta$, smooth over 
  the punctured disc $\Delta^*$ and central fiber isomorphic to an irreducible nodal curve $X_0$ with exactly one node, say at $x_0$. 
  Assume further that $\mc{X}$ is regular. To compute the limit mixed Hodge structure, we need the central fiber to be a reduced 
  simple normal crossings divisor.  For this purpose, we blow-up $\mc{X}$
  at the point $x_0$.
  Denote by $\widetilde{\mc{X}}:=\mr{Bl}_{x_0}\mc{X}$ and by $\widetilde{\pi}_1:\widetilde{\mc{X}} \to \mc{X} \xrightarrow{\pi_1} \Delta$. Note that for $t \not= 0$, 
  $\widetilde{\pi}_1^{-1}(t)=\pi_1^{-1}(t)$.
  The central fiber of $\widetilde{\pi}_1$ is the union of two irreducible components, the normalization $\widetilde{X}_0$ of $X_0$ and 
  the exceptional divisor $F \cong \mb{P}^1_{x_0}$ intersecting $\widetilde{X}_0$ at the two points 
  over $x_0$.
  
  Fix an invertible sheaf $\mc{L}$ on $\mc{X}$ of relative odd degree, say $d$. Set $\mc{L}_0:=\mc{L}|_{X_0}$, the restriction of $\mc{L}$ to the central fiber.
  Let $\widetilde{\mc{L}}_0$ be the pullback of $\mc{L}_0$ by the normalization $\widetilde{X}_0\to X_0$.
    Let \[\pi_2:\mc{G}(2,\mc{L}) \to \Delta\]
    the   relative Gieseker moduli spaces of rank $2$ semi-stable sheaves on $\mc{X}$ with determinant 
  $\mc{L}$ as defined in Notation  \ref{ner06}. We know by \cite[Theorem $1.2$]{K4} that these moduli spaces are in fact non-empty. 
  Let $\mc{G}_{X_0}(2,\mc{L}_0)$ denote the central fiber of the family $\pi_2$. 
  Recall, $\mc{G}(2,\mc{L})$ is regular, smooth over $\Delta^*$,  with reduced simple normal crossings divisor $\mc{G}_{X_0}(2,\mc{L}_0)$ 
  as the central fiber (Theorem \ref{tor15}).
  Denote by $M_{\widetilde{X}_0}(2,\widetilde{\mc{L}}_0)$ the fine moduli space of semi-stable sheaves of 
  rank $2$ and with determinant $\widetilde{\mc{L}}_0$ over $\widetilde{X}_0$. See \cite{ind} for basic results related to $M_{\widetilde{X}_0}(2,\widetilde{\mc{L}}_0)$.
  By Theorem \ref{tor15}, $\mc{G}_{X_0}(2,\mc{L}_0)$ can be written as the union of two irreducible components, say $\mc{G}_0$ and $\mc{G}_1$, where 
  $\mc{G}_1$ (resp. $\mc{G}_0 \cap \mc{G}_1$) is isomorphic to a $\p3$ (resp. $\mb{P}^1 \times \mb{P}^1$)-bundle over $M_{\widetilde{X}_0}(2,\widetilde{\mc{L}}_0)$.
  \end{note}

  \subsection{Hodge bundles}
  
   Consider the restriction of the families $\widetilde{\pi}_1$ and $\pi_2$ to the punctured  disc:
 \[\widetilde{\pi}_1':\widetilde{\mc{X}}_{\Delta^*} \to \Delta^* \, \mbox{ and }\, \pi_2':\mc{G}(2,\mc{L})_{\Delta^*} \to \Delta^*,\]
 where $\widetilde{\mc{X}}_{\Delta^*}:=\widetilde{\pi}_1^{-1}(\Delta^*)$ and $\mc{G}(2,\mc{L})_{\Delta^*}:=\pi_2^{-1}(\Delta^*)$.
 Using Ehresmann's theorem (see \cite[Theorem $9.3$]{v4}), we have for all $i \ge 0$,
 \[\mb{H}_{\widetilde{\mc{X}}_{\Delta^*}}^i:=R^i\widetilde{\pi}'_{1_*}\mb{Z} \mbox{ and } \mb{H}^i_{\mc{G}(2,\mc{L})_{\Delta^*}}:=R^i\pi'_{2_*}\mb{Z}\] 
 are the local systems over $\Delta^*$ with fibers $H^i(\mc{X}_t,\mb{Z})$ and $H^i(\mc{G}(2,\mc{L})_{t},\mb{Z})$ respectively, for $t \in \Delta^*$.
 One can canonically associate to these local systems, the holomorphic vector bundles 
 \[\mc{H}_{\widetilde{\mc{X}}_{\Delta^*}}^1:=\mb{H}_{\widetilde{\mc{X}}_{\Delta^*}}^1 \otimes_{\mb{Z}} \mo_{\Delta^*} \, \mbox{ and } \, \mc{H}^3_{\mc{G}(2,\mc{L})_{\Delta^*}}:=\mb{H}^3_{\mc{G}(2,\mc{L})_{\Delta^*}} \otimes_{\mb{Z}} \mo_{\Delta^*},\]
 called the \emph{Hodge bundles}.
 There exist holomorphic sub-bundles \[F^p\mc{H}_{\widetilde{\mc{X}}_{\Delta^*}}^1 \subset \mc{H}_{\widetilde{\mc{X}}_{\Delta^*}}^1 \mbox{ and } F^p\mc{H}^3_{\mc{G}(2,\mc{L})_{\Delta^*}} \subset \mc{H}^3_{\mc{G}(2,\mc{L})_{\Delta^*}}\]
defined by the condition: for any $t \in \Delta^*$, the fibers \[\left(F^p\mc{H}_{\widetilde{\mc{X}}_{\Delta^*}}^1\right)_t \subset \left(\mc{H}_{\widetilde{\mc{X}}_{\Delta^*}}^1\right)_t \mbox{ and }
\left(F^p\mc{H}^3_{\mc{G}(2,\mc{L})_{\Delta^*}}\right)_t \subset \left(\mc{H}^3_{\mc{G}(2,\mc{L})_{\Delta^*}}\right)_t\] can be identified respectively with 
\[F^pH^1(\mc{X}_t,\mb{C}) \subset H^1(\mc{X}_t,\mb{C}) \mbox{ and } F^pH^3(\mc{G}(2,\mc{L})_{t},\mb{C}) \subset H^3(\mc{G}(2,\mc{L})_{t},\mb{C}),\]
 where $F^p$ denotes the Hodge filtration (see \cite[\S $10.2.1$]{v4}).
  
 \subsection{Canonical extensions}
 The Hodge bundles and their holomorphic sub-bundles defined above can be extended to the entire disc. In particular,
 there exist  \emph{canonical extensions}, $\ov{\mc{H}}_{\widetilde{\mc{X}}}^1$ and $\ov{\mc{H}}^3_{\mc{G}(2,\mc{L})}$ of 
 ${\mc{H}}_{\widetilde{\mc{X}}_{\Delta^*}}^1$ and ${\mc{H}}^3_{\mc{G}(2,\mc{L})_{\Delta^*}}$ respectively, to $\Delta$ (see \cite[Definition $11.4$]{pet}).
 Note that, $\ov{\mc{H}}_{\widetilde{\mc{X}}}^1$ and $\ov{\mc{H}}^3_{\mc{G}(2,\mc{L})}$ are locally-free over $\Delta$. Denote by $j:\Delta^* \to \Delta$
 the inclusion morphism, 
 \[F^p\ov{\mc{H}}_{\widetilde{\mc{X}}}^1:= j_*\left(F^p\mc{H}_{\widetilde{\mc{X}}_{\Delta^*}}^1\right) \cap  \ov{\mc{H}}_{\widetilde{\mc{X}}}^1 \mbox{ and }
   F^p\ov{\mc{H}}^3_{\mc{G}(2,\mc{L})}:= j_*\left(F^p\mc{H}^3_{\mc{G}(2,\mc{L})_{\Delta^*}}\right) \cap \ov{\mc{H}}^3_{\mc{G}(2,\mc{L})}.\]
 Note that, $F^p\ov{\mc{H}}_{\widetilde{\mc{X}}}^1$ (resp. $F^p\ov{\mc{H}}^3_{\mc{G}(2,\mc{L})}$) is the \emph{unique largest} locally-free
 sub-sheaf of $\ov{\mc{H}}_{\widetilde{\mc{X}}}^1$ (resp. $\ov{\mc{H}}^3_{\mc{G}(2,\mc{L})}$) which extends $F^p\mc{H}_{\widetilde{\mc{X}}_{\Delta^*}}^1$
 (resp. $F^p\mc{H}^3_{\mc{G}(2,\mc{L})_{\Delta^*}}$).

 Consider the universal cover $\mf{h} \to \Delta^*$ of the punctured unit disc. 
 Denote by $e:\mf{h} \to \Delta^* \xrightarrow{j} \Delta$ the composed morphism and  
 $\widetilde{\mc{X}}_\infty$ (resp. $\mc{G}(2,\mc{L})_\infty$)
 the base change of the family $\widetilde{\mc{X}}$ (resp. $\mc{G}(2,\mc{L})$) over $\Delta$ to $\mf{h}$, by the morphism $e$.
   There is an explicit identification of the central fiber of the canonical extensions $\ov{\mc{H}}_{\widetilde{\mc{X}}}^1$ 
 and $\ov{\mc{H}}^3_{\mc{G}(2,\mc{L})}$ and the cohomology groups $H^1(\widetilde{\mc{X}}_{\infty},\mb{C})$ and 
 $H^3(\mc{G}(2,\mc{L})_\infty,\mb{C})$ respectively, depending on the choice of the parameter $t$ on $\Delta$ (see \cite[XI-$8$]{pet}):
 \begin{equation}\label{tor23}
  g_{\widetilde{\mc{X}},t}:H^1(\widetilde{\mc{X}}_{\infty},\mb{C}) \xrightarrow{\sim} \left(\ov{\mc{H}}_{\widetilde{\mc{X}}}^1\right)_0 \mbox{ and } g_{\mc{G}(2,\mc{L}),t}:H^3(\mc{G}(2,\mc{L})_\infty,\mb{C}) \xrightarrow{\sim} \left(\ov{\mc{H}}^3_{\mc{G}(2,\mc{L})}\right)_0.
 \end{equation}
 This induce (Hodge) filtrations on $H^1(\widetilde{\mc{X}}_{\infty},\mb{C})$ and $H^3(\mc{G}(2,\mc{L})_\infty,\mb{C})$ as:
 \begin{eqnarray}\label{tor20}
      & F^pH^1(\widetilde{\mc{X}}_{\infty},\mb{C})  :=(g_{\widetilde{\mc{X}},t})^{-1}\left(F^p\ov{\mc{H}}_{\widetilde{\mc{X}}}^1\right)_0 \mbox{ and }\\
      & F^pH^3(\mc{G}(2,\mc{L})_\infty,\mb{C})  :=(g_{\mc{G}(2,\mc{L}),t})^{-1}\left(F^p\ov{\mc{H}}^3_{\mc{G}(2,\mc{L})}\right)_0.
            \end{eqnarray}
          
\subsection{Limit weight filtration}
     Let $T_{\widetilde{\mc{X}}_{\Delta^*}}$ and $T_{\mc{G}(2,\mc{L})_{\Delta^*}}$ denote the \emph{monodromy automorphisms}:
     \begin{equation}\label{ner07}
      T_{\widetilde{\mc{X}}_{\Delta^*}}: \mb{H}_{\widetilde{\mc{X}}_{\Delta^*}}^1 \to \mb{H}_{\widetilde{\mc{X}}_{\Delta^*}}^1 \mbox{ and } T_{\mc{G}(2,\mc{L})_{\Delta^*}}:\mb{H}^3_{\mc{G}(2,\mc{L})_{\Delta^*}} \to \mb{H}^3_{\mc{G}(2,\mc{L})_{\Delta^*}}
     \end{equation}
 defined by parallel transport along a counterclockwise loop about $0 \in \Delta$ (see \cite[\S $11.1.1$]{pet}).
 By \cite[Proposition $11.2$]{pet} the automorphisms extend to automorphisms of $\ov{\mc{H}}_{\widetilde{\mc{X}}}^1$ and $\ov{\mc{H}}^3_{\mc{G}(2,\mc{L})}$, respectively.
 Since the monodromy automorphisms $T_{\widetilde{\mc{X}}_{\Delta^*}}$ and $T_{\mc{G}(2,\mc{L})_{\Delta^*}}$ were defined on the integral local systems, the induced automorphisms on 
 $\ov{\mc{H}}_{\widetilde{\mc{X}}}^1$ and $\ov{\mc{H}}^3_{\mc{G}(2,\mc{L})}$ after restriction to the central fiber gives the following automorphisms:
 \begin{eqnarray*}
  &T_{\widetilde{\mc{X}}}:H^1(\widetilde{\mc{X}}_{\infty},\mb{Q}) \to H^1(\widetilde{\mc{X}}_{\infty},\mb{Q}) \mbox{ and }\\
  &T_{\mc{G}(2,\mc{L})}:H^3(\mc{G}(2,\mc{L})_{\infty},\mb{Q}) \to H^3(\mc{G}(2,\mc{L})_{\infty},\mb{Q}).
 \end{eqnarray*}

  \begin{rem}\label{tor25}
  There exists an unique increasing \emph{monodromy weight filtration} $W_\bullet$  (see \cite[Lemma-Definition $11.9$]{pet}) on $H^1(\widetilde{\mc{X}}_\infty,\mb{Q})$ (resp. $H^3(\mc{G}(2,\mc{L})_\infty,\mb{Q})$) such that,
 \begin{enumerate}
  \item  for $i \ge 2$, $\log T_{\widetilde{\mc{X}}}(W_iH^1(\widetilde{\mc{X}}_\infty,\mb{Q})) \subset W_{i-2}H^1(\widetilde{\mc{X}}_\infty,\mb{Q})$ \[\mbox{(resp. } \log T_{\mc{G}(2,\mc{L})}(W_iH^3(\mc{G}(2,\mc{L})_\infty,\mb{Q})) \subset W_{i-2}H^3(\mc{G}(2,\mc{L})_\infty,\mb{Q})),\]
  \item the map $(\log T_{\widetilde{\mc{X}}})^l: \mr{Gr}^W_{1+l} H^1(\widetilde{\mc{X}}_\infty,\mb{Q}) \to \mr{Gr}^W_{1-l} H^1(\widetilde{\mc{X}}_\infty,\mb{Q})$ 
  \[(\mbox{resp. } (\log T_{\mc{G}(2,\mc{L})})^l: \mr{Gr}^W_{3+l} H^3(\mc{G}(2,\mc{L})_\infty,\mb{Q}) \to\mr{Gr}^W_{3-l} H^3(\mc{G}(2,\mc{L})_\infty,\mb{Q}))\]
  is an isomorphism for all $l \ge 0$.
   \end{enumerate}

  Using \cite[Theorem $6.16$]{schvar}, one can observe that the induced filtrations on  $H^1(\widetilde{\mc{X}}_{\infty},\mb{C})$ (resp. $H^3(\mc{G}(2,\mc{L})_\infty,\mb{C})$) defines 
  a mixed Hodge structure \[(H^1(\widetilde{\mc{X}}_{\infty},\mb{Z}),W_\bullet,F^\bullet)\, (\mbox{resp. }\, (H^3(\mc{G}(2,\mc{L})_\infty,\mb{Z}), W_\bullet, F^\bullet)).\]
 \end{rem}

\subsection{Steenbrink spectral sequences}
 Let $\rho:\mc{Y} \to \Delta$ be a flat, family of projective varieties,
 smooth over $\Delta \backslash \{0\}$ such that the central fiber $\mc{Y}_0$ is a reduced 
 simple normal crossings divisor in $\mc{Y}$ consisting of exactly \emph{two} irreducible components, say $Y_1$ and $Y_2$.
 Assume further that $\mc{Y}$ is regular. Denote by $\mf{h}$ is the universal cover of $\Delta^*$  and 
 \[\rho_\infty: \mc{Y}_{\infty} \to \mf{h}\] the base change of $\rho$ under the natural morphism from $\mf{h}$ to $\Delta^*$.
  As $\mc{Y}_0$ consists of exactly two irreducible components, we have the following terms of the (limit) weight spectral sequence:
 
  \begin{prop}[{\cite[Corollary $11.23$]{pet} and \cite[Example $3.5$]{ste1}}]\label{tor26}
 The \emph{limit weight spectral sequence} $^{^{\infty}}_{_W}E_1^{p,q} \Rightarrow H^{p+q}(\mc{Y}_\infty , \mb{Q})$ consists of the following terms:
 \begin{enumerate}
  \item if $|p| \ge 2$, then $^{^{\infty}}_{_W}E_1^{p,q}=0$,
  \item $^{^{\infty}}_{_W}E_1^{1,q}=H^q(Y_1 \cap Y_2,\mb{Q})(0)$, $^{^{\infty}}_{_W}E_1^{0,q}=H^q(Y_1,\mb{Q})(0) \oplus H^q(Y_2,\mb{Q})(0)$  and $^{^{\infty}}_{_W}E_1^{-1,q}=H^{q-2}(Y_1 \cap Y_2,\mb{Q})(-1)$,
  \item the differential map $d_1:\, ^{^{\infty}}_{_W}E_1^{0,q} \to \, ^{^{\infty}}_{_W}E_1^{1,q}$ is the restriction morphism and \[d_1:\, ^{^{\infty}}_{_W}E_1^{-1,q} \to \, ^{^{\infty}}_{_W}E_1^{0,q}\] is the Gysin morphism.
 \end{enumerate}
The limit weight spectral sequence $^{^{\infty}}_{_W}E_1^{p,q}$ degenerates at $E_2$.
Similarly, the \emph{weight spectral sequence } $_{_W}E_1^{p,q} \Rightarrow H^{p+q}(\mc{Y}_0 , \mb{Q})$  on $\mc{Y}_0$ consists of the following terms:
 \begin{enumerate}
  \item for $p \ge 2$ or $p<0$, we have $_{_W}E_1^{p,q}=0$,
  \item $_{_W}E_1^{1,q}=H^q(Y_1 \cap Y_2,\mb{Q})(0)$ and $_{_W}E_1^{0,q}=H^q(Y_1,\mb{Q})(0) \oplus H^q(Y_2,\mb{Q})(0)$,
  \item the differential map $d_1:\, _{_W}E_1^{0,q} \to \, _{_W}E_1^{1,q}$ is the restriction morphism.
 \end{enumerate}
 The spectral sequence $_{_W}E_1^{p,q}$ degenerates at $E_2$. 
 \end{prop}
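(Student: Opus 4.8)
The plan is to obtain Proposition~\ref{tor26} by specializing the general description of the (limit) weight spectral sequence of a semistable degeneration --- recorded in \cite[Corollary $11.23$]{pet} and \cite[Example $3.5$]{ste1} --- to a central fibre with exactly two irreducible components. First I would recall the shape of the $E_1$-page in general. Writing $\mc{Y}_0 = \bigcup_{i \in S} Y_i$, and for $I \subseteq S$ setting $Y_I := \bigcap_{i \in I} Y_i$ and $Y^{(k)} := \coprod_{|I| = k} Y_I$ (a disjoint union of smooth projective varieties), one has
\[{}^{^{\infty}}_{_W}E_1^{p,q} = \bigoplus_{k \ge \max(0,-p)} H^{q-2k}\big(Y^{(p+2k+1)}, \mb{Q}\big)(-k) \ \Rightarrow\ H^{p+q}(\mc{Y}_\infty, \mb{Q}),\]
with $d_1$ the alternating sum of restriction maps along the combinatorial (\v{C}ech) direction together with Gysin maps in the other direction, and each ${}^{^{\infty}}_{_W}E_1^{p,q}$ a pure Hodge structure of weight $q$ (the summand $H^{q-2k}(Y^{(m)})(-k)$ has weight $(q-2k)+2k = q$). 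Likewise $H^\bullet(\mc{Y}_0)$ is computed by the \v{C}ech spectral sequence $_{_W}E_1^{p,q} = H^q(Y^{(p+1)}, \mb{Q})(0) \Rightarrow H^{p+q}(\mc{Y}_0, \mb{Q})$ with $d_1$ the alternating sum of restrictions, again pure of weight $q$. A necessary preliminary --- routine, but the only genuinely fiddly point --- is to check that the indexing, sign and Tate-twist conventions of \cite{pet, ste1} agree with the normalization used in the statement.

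Next I would specialize to $S = \{1, 2\}$, so that $Y^{(1)} = Y_1 \sqcup Y_2$, $Y^{(2)} = Y_1 \cap Y_2$ and $Y^{(k)} = \emptyset$ for $k \ge 3$. Every summand of ${}^{^{\infty}}_{_W}E_1^{p,q}$ occurring when $|p| \ge 2$ involves some $Y^{(m)}$ with $m \ge 3$, hence vanishes; this is assertion (1). For $p = 1$ only $k = 0$ contributes, yielding $H^q(Y_1 \cap Y_2, \mb{Q})(0)$; for $p = 0$ only $k = 0$ contributes, yielding $H^q(Y_1, \mb{Q})(0) \oplus H^q(Y_2, \mb{Q})(0)$; for $p = -1$ only $k = 1$ contributes, yielding $H^{q-2}(Y_1 \cap Y_2, \mb{Q})(-1)$. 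This is (2); unwinding the general formula for $d_1$ in this case identifies $d_1 : {}^{^{\infty}}_{_W}E_1^{0,q} \to {}^{^{\infty}}_{_W}E_1^{1,q}$ with the difference of the two restriction maps $H^q(Y_i) \to H^q(Y_1 \cap Y_2)$, and $d_1 : {}^{^{\infty}}_{_W}E_1^{-1,q} \to {}^{^{\infty}}_{_W}E_1^{0,q}$ with the Gysin maps of the inclusions $Y_1 \cap Y_2 \hookrightarrow Y_i$, which is (3). The same substitution into $_{_W}E_1^{p,q} = H^q(Y^{(p+1)}, \mb{Q})(0)$ kills all columns except $p = 0$ and $p = 1$ and reproduces the asserted terms and differential for the spectral sequence on $\mc{Y}_0$.

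Finally, degeneration at $E_2$ in both cases is a pure-weight argument: since ${}^{^{\infty}}_{_W}E_1^{p,q}$ (resp. $_{_W}E_1^{p,q}$) is pure of weight $q$ and $d_1$ preserves $q$, the term ${}^{^{\infty}}_{_W}E_2^{p,q}$ (resp. $_{_W}E_2^{p,q}$) is again pure of weight $q$; but for $r \ge 2$ the differential $d_r$ sends a term of weight $q$ into one of weight $q - r + 1 < q$, hence is a morphism of pure Hodge structures of different weights and therefore vanishes. Thus both spectral sequences degenerate at $E_2$. I expect no conceptual obstacle here: the content is entirely in the first step, namely aligning the bookkeeping (indices, signs, Tate twists) of the references with the statement; once that is pinned down, the second step is a direct substitution and the third is the standard weight argument.
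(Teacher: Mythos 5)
Your proposal is correct and matches the paper's treatment: the paper offers no independent argument for Proposition \ref{tor26}, simply quoting \cite[Corollary $11.23$]{pet} and \cite[Example $3.5$]{ste1}, and your plan is exactly the intended specialization of those general (limit) weight spectral sequences to a central fibre with two components, with the standard purity-of-weight argument giving degeneration at $E_2$. Your bookkeeping of the $E_1$-terms (indices, Tate twists, and the identification of $d_1$ as restriction resp.\ Gysin) agrees with the conventions used in the statement, so no gap remains.
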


 \begin{note}
  To  avoid confusions arising from the underlying family, we denote by $ ^{^{\infty}}_{_W}E_1^{p,q}(\rho)$ and $_{_W}E_1^{p,q}(\rho)$, the 
 limit weight spectral sequence on $\mc{Y}_\infty$ and the weight spectral sequence on $\mc{Y}_0$ respectively, associated to the family $\rho$, defined above.
 \end{note}
 
 \subsection{Kernel of a Gysin morphism}
 We compute the kernel of the Gysin morphism from $H^2(\mc{G}_0 \cap \mc{G}_1,\mb{Q})$ to $H^4(\mc{G}_1,\mb{Q})$ (Proposition \ref{tor14}). This will play an important role in giving an explicit description of the 
 specialization morphisms associated to the families $\pi_1$ and $\pi_2$
 defined above. We first fix some notations:
 
 \begin{note}
 Denote by $Y_b:=M_{\widetilde{X}_0}(2,\widetilde{\mc{L}}_0)$ and 
 $\rho_{12}:\mc{G}_0 \cap \mc{G}_1 \to Y_b$ the natural bundle morphism.
 For any $y \in Y_b$, denote by $(\mc{G}_0 \cap \mc{G}_1)_y:=\rho_{12}^{-1}(y)$ and 
 $\alpha_y:(\mc{G}_0 \cap \mc{G}_1)_y \hookrightarrow \mc{G}_0 \cap \mc{G}_1$ the natural inclusion. It is well-known that $Y_b$ is rationally connected i.e any two general points are connected by a rational curve (see \cite[Definition $4.3$]{deba} for the precise definition). For a proof of $Y_b$ being rationally connected see \cite[Proposition $2.3.7$ and Remark $2.3.8$]{ind}. 
 \end{note}
 
 \begin{lem}\label{tor18}
  The cohomology group $H^2(\mc{G}_0 \cap \mc{G}_1,\mb{Q})$ sits in the following short exact sequence of vector spaces:
  \[0 \to H^2(Y_b,\mb{Q}) \xrightarrow{\rho_{12}^*} H^2(\mc{G}_0 \cap \mc{G}_1,\mb{Q}) \xrightarrow{\alpha_y^*} H^2((\mc{G}_0 \cap \mc{G}_1)_y,\mb{Q}) \to 0\]
  for a general $y \in Y_b$.
 \end{lem}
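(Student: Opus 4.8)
The statement is that, for a general point $y \in Y_b$, the pullback $\rho_{12}^*$ and the restriction $\alpha_y^*$ fit into a short exact sequence computing $H^2(\mc{G}_0 \cap \mc{G}_1, \mb{Q})$. Since $\rho_{12}: \mc{G}_0 \cap \mc{G}_1 \to Y_b$ is a $\mb{P}^1 \times \mb{P}^1$-bundle (in the Zariski topology, or at least étale-locally — the key is that it is a projective bundle-type morphism with fiber $F_y := (\mc{G}_0\cap\mc{G}_1)_y \cong \mb{P}^1\times\mb{P}^1$), the plan is to use the Leray--Hirsch theorem. First I would verify the hypotheses of Leray--Hirsch: one needs classes in $H^*(\mc{G}_0\cap\mc{G}_1,\mb{Q})$ restricting, on each fiber, to a basis of $H^*(\mb{P}^1\times\mb{P}^1,\mb{Q})$. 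These are furnished by the two relative hyperplane classes coming from the two $\mb{P}^1$-factors (equivalently, $c_1$ of the two tautological line bundles on the bundle presentation of $\mc{G}_0 \cap \mc{G}_1$ as described in Theorem \ref{tor15}); one should cite the explicit description of $\mc{G}_0 \cap \mc{G}_1$ as a $\mb{P}^1\times\mb{P}^1$-bundle over $M_{\widetilde X_0}(2,\widetilde{\mc{L}}_0)$ to extract these classes.

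Granting Leray--Hirsch, one obtains an isomorphism of graded $H^*(Y_b,\mb{Q})$-modules
\[
H^*(\mc{G}_0 \cap \mc{G}_1,\mb{Q}) \;\cong\; H^*(Y_b,\mb{Q}) \otimes_{\mb{Q}} H^*(\mb{P}^1\times\mb{P}^1,\mb{Q}).
\]
Taking degree-$2$ parts, and using that $H^*(\mb{P}^1\times\mb{P}^1,\mb{Q})$ has $H^0$ one-dimensional and $H^2$ two-dimensional with $H^1 = 0$, the degree-$2$ piece reads $H^2(\mc{G}_0\cap\mc{G}_1,\mb{Q}) \cong H^2(Y_b,\mb{Q})\otimes H^0 \;\oplus\; H^0(Y_b,\mb{Q})\otimes H^2(\mb{P}^1\times\mb{P}^1,\mb{Q})$. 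The first summand is exactly the image of $\rho_{12}^*$, which is therefore injective; the second summand maps isomorphically under $\alpha_y^*$ onto $H^2(F_y,\mb{Q}) = H^2(\mb{P}^1\times\mb{P}^1,\mb{Q})$, since the Leray--Hirsch classes restrict to a basis on the fiber. This gives exactness on the left (injectivity of $\rho_{12}^*$), exactness in the middle (the kernel of $\alpha_y^*$ is precisely the span of the $\rho_{12}^*$-classes, as these are the fiberwise-constant ones), and surjectivity of $\alpha_y^*$ onto $H^2(F_y,\mb{Q})$. One then identifies $H^2((\mc{G}_0\cap\mc{G}_1)_y,\mb{Q})$ with $H^2(\mb{P}^1\times\mb{P}^1,\mb{Q})$, which is automatic since $(\mc{G}_0\cap\mc{G}_1)_y$ \emph{is} a $\mb{P}^1\times\mb{P}^1$ for general (indeed every) $y$.

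Alternatively, if one prefers not to invoke Leray--Hirsch directly, the same conclusion follows from the Leray spectral sequence $E_2^{p,q} = H^p(Y_b, R^q\rho_{12*}\mb{Q}) \Rightarrow H^{p+q}(\mc{G}_0\cap\mc{G}_1,\mb{Q})$: the local systems $R^q\rho_{12*}\mb{Q}$ are constant (the fiber cohomology is generated by algebraic classes that extend globally, or one uses that $Y_b$ is simply connected — which holds since $Y_b$ is rationally connected, as recorded in the Notation preceding the lemma), so $E_2^{p,q} = H^p(Y_b,\mb{Q})\otimes H^q(\mb{P}^1\times\mb{P}^1,\mb{Q})$; the spectral sequence degenerates at $E_2$ (again by Leray--Hirsch, or by the existence of the relative hyperplane classes), and in total degree $2$ the only contributions are $E_2^{2,0}$ and $E_2^{0,2}$ because $E_2^{1,1} = H^1(Y_b,\mb{Q})\otimes H^1(\mb{P}^1\times\mb{P}^1,\mb{Q}) = 0$ (the second factor vanishes). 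Reading off the edge maps of the Leray filtration then yields the displayed short exact sequence.

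\textbf{Main obstacle.} The only real point to nail down is the degeneration / Leray--Hirsch input, i.e. producing honest global cohomology classes on $\mc{G}_0\cap\mc{G}_1$ that restrict fiberwise to a basis; this is where the explicit bundle structure from Theorem \ref{tor15} must be used, and one should be slightly careful that $\mc{G}_0\cap\mc{G}_1 \to Y_b$ is a genuine $\mb{P}^1\times\mb{P}^1$-bundle (a product of two projective bundles, or at worst a quadric-surface bundle) rather than merely a fibration with fiber $\mb{P}^1\times\mb{P}^1$ — if it is only the latter, the vanishing $H^1(\text{fiber}) = 0$ together with simple-connectedness of $Y_b$ still forces constancy of the relevant local systems, so the argument goes through, but it is worth stating which presentation one is relying on.
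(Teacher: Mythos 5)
Your proof is correct, but it follows a genuinely different route from the paper's. You work purely topologically: Leray--Hirsch (or the Leray spectral sequence plus simple connectedness of $Y_b$) applied to the $\mb{P}^1\times\mb{P}^1$-bundle $\rho_{12}$, using the two relative hyperplane classes coming from the presentation of $\mc{G}_0\cap\mc{G}_1$ as a fibre product of two $\mb{P}^1$-bundles over $Y_b$. The paper instead argues at the level of Picard groups: Zariski-local triviality of $\rho_{12}$ over some open $U\subset Y_b$ together with the Hartshorne exercises (II.6.5, III.12.4, III.12.6) and the projection formula produce the short exact sequence $0\to\Pic(Y_b)\to\Pic(\mc{G}_0\cap\mc{G}_1)\to\Pic((\mc{G}_0\cap\mc{G}_1)_y)\to 0$ for general $y$, and then rational connectedness of $Y_b$, of the fibre, and (via Graber--Harris--Starr) of the total space makes each $c_1:\Pic\to H^2(\cdot,\mb{Z})$ an isomorphism through the exponential sequence, transferring the sequence to $H^2$. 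Your approach buys a statement valid for every $y$ (not just general $y$) and avoids the rational-connectedness input entirely; the paper's approach needs only local triviality over an open set plus rational connectedness, which also explains the ``general $y$'' phrasing and why the divisor-class description of $H^2$ is available for the commutativity argument in Proposition \ref{tor14}. One small caveat on your fallback remark: if $\rho_{12}$ were merely a fibration with fibre $\mb{P}^1\times\mb{P}^1$, constancy of the local systems alone would not kill the differential $d_3:E_3^{0,2}\to E_3^{3,0}=H^3(Y_b,\mb{Q})$, so degeneration still requires either the two global tautological classes you describe or Deligne's degeneration theorem for smooth projective morphisms (which does apply here); as you note, exhibiting those classes from the explicit bundle structure of Theorem \ref{tor15} is the one point that must be nailed down.
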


 \begin{proof}
  Since $\mc{G}_0 \cap \mc{G}_1$ is a $\mb{P}^1 \times \mb{P}^1$-bundle over $Y_b$, there exists an open subset $U \subset Y_b$ such that $\rho_{12}^{-1}(U) \cong U \times (\mc{G}_0 \cap \mc{G}_1)_y$
 and $H^1(\mo_{(\mc{G}_0 \cap \mc{G}_1)_y})=0$ for any $y \in U$.
 By \cite[Ex. III.$12.6$]{R1}, this implies 
 \[\mr{Pic}(\rho_{12}^{-1}(U)) \cong \mr{Pic}( U \times (\mc{G}_0 \cap \mc{G}_1)_y) \cong \mr{Pic}(U) \times \mr{Pic}((\mc{G}_0 \cap \mc{G}_1)_y).\]
 We then have the following composition of surjective morphisms, \[\alpha_y^*:\mr{Pic}(\mc{G}_0 \cap \mc{G}_1) \twoheadrightarrow \mr{Pic}(\rho_{12}^{-1}(U)) \twoheadrightarrow \mr{Pic}((\mc{G}_0 \cap \mc{G}_1)_y)\]
 where the first surjection follows from \cite[Proposition II.$6.5$]{R1} and the second is simply projection.
 By \cite[Ex. III.$12.4$]{R1}, for a general $y \in Y_b$, we have $\ker \alpha_y^* \cong \rho_{12}^*\mr{Pic}(Y_b)$.
 Using the projection formula and the Zariski's main theorem (see proof of \cite[Corollary III.$11.4$]{R1}), 
 observe that for any invertible sheaf $\mc{L}$ on $Y_b$, we have $\rho_{12_*}\rho_{12}^*\mc{L} \cong \mc{L}$.
 In particular, the morphism $\rho_{12}^*$ is injective and we have the following short exact sequence for a general $y \in Y_b$:
 \[0 \to \mr{Pic}(Y_b) \xrightarrow{\rho_{12}^*} \mr{Pic}(\mc{G}_0 \cap \mc{G}_1) \xrightarrow{\alpha_y^*} \mr{Pic}((\mc{G}_0 \cap \mc{G}_1)_y) \to 0.\]
 As $Y_b$ and $(\mc{G}_0 \cap \mc{G}_1)_y$ are rationally connected 
 for any $y \in Y_b$, so is $\mc{G}_0 \cap \mc{G}_1$ (see \cite[Corollary $1.3$]{grabe}).
 Using the exponential short exact sequence associated to each of these varieties we conclude that the corresponding first Chern class maps:
  \[c_1:\mr{Pic}(Y_b) \to H^2(Y_b,\mb{Z}), \, c_1: \mr{Pic}(\mc{G}_0 \cap \mc{G}_1) \to H^2( \mc{G}_0 \cap \mc{G}_1, \mb{Z}) \mbox{ and }\]
  \[ c_1: \mr{Pic}((\mc{G}_0 \cap \mc{G}_1)_y) \to H^2((\mc{G}_0 \cap \mc{G}_1)_y,\mb{Z})\]
 are isomorphisms. This gives us the short exact sequence in the lemma, thereby proving it.
 \end{proof}

  \begin{prop}\label{tor14}
  Denote by $i:\mc{G}_0 \cap \mc{G}_1 \to \mc{G}_1$ the natural inclusion. Then, 
  \[\ker(i_*:H^2(\mc{G}_0 \cap \mc{G}_1,\mb{Q}) \to H^4(\mc{G}_1,\mb{Q})) \cong \mb{Q},\]
  where $i_*$ denotes the Gysin morphism.
 \end{prop}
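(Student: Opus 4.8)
The plan is to compute both cohomology groups appearing in the Gysin map using the projective-bundle structures described in Notation~\ref{tor33}, and then identify the map explicitly. Recall that $\mc{G}_1$ is a $\p3$-bundle over $Y_b := M_{\widetilde{X}_0}(2,\widetilde{\mc{L}}_0)$, while $\mc{G}_0 \cap \mc{G}_1$ is a $\mb{P}^1 \times \mb{P}^1$-bundle over $Y_b$, and the inclusion $i$ realizes $\mc{G}_0 \cap \mc{G}_1$ as a divisor in $\mc{G}_1$ sitting over $Y_b$. The first step is to apply the Leray--Hirsch theorem to both bundles. Since $Y_b$ is rationally connected, $H^1(Y_b,\mb{Q}) = 0$ (and likewise $H^3 = 0$ by the structure theory of these moduli spaces), so the Leray--Hirsch decomposition gives
\[
H^4(\mc{G}_1,\mb{Q}) \cong H^4(Y_b,\mb{Q}) \oplus \big(H^2(Y_b,\mb{Q}) \otimes H^2(\p3)\big) \oplus H^4(\p3),
\]
using that $H^*(\p3)$ is generated by the hyperplane class $h$ with $h^4 = 0$; in particular the "fiber" part of $H^4(\mc{G}_1)$ contributes $H^2(Y_b) \cdot h \oplus \mb{Q}\cdot h^2$. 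Lemma~\ref{tor18} already gives the analogous decomposition
\[
0 \to H^2(Y_b,\mb{Q}) \xrightarrow{\rho_{12}^*} H^2(\mc{G}_0 \cap \mc{G}_1,\mb{Q}) \xrightarrow{\alpha_y^*} H^2((\mc{G}_0 \cap \mc{G}_1)_y,\mb{Q}) \to 0,
\]
so $H^2(\mc{G}_0 \cap \mc{G}_1,\mb{Q}) \cong H^2(Y_b,\mb{Q}) \oplus \mb{Q}^2$, the last summand being $H^2(\mb{P}^1 \times \mb{P}^1)$ spanned by the two rulings.

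Next I would analyze the Gysin map $i_*$ summand by summand, using the projection formula $i_*(i^* \beta \cdot \gamma) = \beta \cdot i_*(\gamma)$ relative to $Y_b$. The class $[\mc{G}_0 \cap \mc{G}_1] \in H^2(\mc{G}_1,\mb{Q})$ is $i_*(1)$, which by the projective-bundle presentation is a combination of $h$ and (a pullback from $Y_b$); restricted to each $\p3$-fiber it must be the hyperplane class, so $i_*(1) = h + (\text{pullback from } Y_b)$. Pulling classes back and forth, on the part of $H^2(\mc{G}_0 \cap \mc{G}_1)$ coming from $\rho_{12}^* H^2(Y_b)$, the Gysin map is (up to the identification above) multiplication by $[\mc{G}_0\cap \mc{G}_1]$ followed by inclusion into $H^4(\mc{G}_1)$, which is injective on this summand because the $H^2(Y_b)\cdot h$ component of the image is a faithful copy of $H^2(Y_b)$. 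On the fiberwise part $\mb{Q}^2 = H^2(\mb{P}^1\times\mb{P}^1)$, the Gysin map sends each ruling class to its pushforward in $H^4(\p3)$; since the inclusion $\mb{P}^1\times\mb{P}^1 \hookrightarrow \p3$ is that of a smooth quadric surface, each line on the quadric pushes forward to the line class in $\p3$, i.e.\ both rulings map to the same generator $h^2 \in H^4(\p3) = \mb{Q}$. Hence the fiberwise part of $i_*$ is the map $\mb{Q}^2 \to \mb{Q}$, $(a,b) \mapsto a+b$, whose kernel is exactly one-dimensional, spanned by the difference of the two ruling classes.

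Assembling the two pieces, $\ker i_*$ lies entirely in the fiberwise summand and is one-dimensional, so $\ker i_* \cong \mb{Q}$, as claimed. To make the "summand by summand" argument rigorous I would note that the splitting of $H^4(\mc{G}_1)$ and $H^2(\mc{G}_0\cap\mc{G}_1)$ as $H^*(Y_b)$-modules is compatible with $i_*$ once one checks that $i_*$ is a morphism of $H^*(Y_b,\mb{Q})$-modules — which is immediate from the projection formula since both bundles sit over the common base $Y_b$ and $i$ commutes with the structure maps. Concretely: write $\eta := \rho_{12}^* H^2(Y_b) \oplus \mb{Q}\langle \ell_1, \ell_2\rangle$ for the ruling classes $\ell_1,\ell_2$; then $i_*\ell_1 = i_*\ell_2 = h^2$ and $i_*|_{\rho_{12}^*H^2(Y_b)}$ is injective, giving the result.

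**The main obstacle** I anticipate is pinning down precisely how $i_*$ behaves on the ruling classes, i.e.\ verifying that the restriction of the two generators of $H^2(\mb{P}^1\times\mb{P}^1)$ push forward to the \emph{same} class in $H^4(\p3)$. This is geometrically the statement that the two families of lines on a smooth quadric surface are homologous inside $\p3$, which is classical, but to invoke it cleanly one must identify the embedding $\mc{G}_0 \cap \mc{G}_1 \hookrightarrow \mc{G}_1$ fiberwise with the standard quadric-in-$\p3$ embedding — this relies on the detailed description of the Gieseker moduli space from Theorem~\ref{tor15} and the cited references (\cite{tha}, \cite{ind}). The complementary point, that $i_*$ is injective on the base-cohomology summand, is softer: it follows because $i_*(\rho_{12}^*\beta) = \beta \cdot i_*(1)$ and $i_*(1)$ has nonzero $h$-component in the Leray--Hirsch decomposition of $H^2(\mc{G}_1)$, so the product with $i_*(1)$ cannot kill any nonzero class pulled back from $Y_b$.
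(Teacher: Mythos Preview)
Your proposal is correct and follows essentially the same approach as the paper: both use Lemma~\ref{tor18} and the Leray--Hirsch decomposition of $H^4(\mc{G}_1,\mb{Q})$, apply the projection formula to see that $i_*$ is injective on $\rho_{12}^*H^2(Y_b,\mb{Q})$, and then reduce to the fiberwise Gysin map $i_{y_*}:H^2(\mb{P}^1\times\mb{P}^1,\mb{Q})\to H^4(\p3,\mb{Q})$ for the Segre embedding, whose kernel is $\mb{Q}$. The only organizational difference is that the paper packages the reduction into a commutative diagram of split short exact sequences (with vertical maps $\beta$, $i_*$, $i_{y_*}$) and reads off $\ker i_*\cong\ker i_{y_*}$ from the injectivity of $\beta$, whereas you carry out the same reduction by hand via the $H^*(Y_b,\mb{Q})$-module structure.
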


 \begin{proof}
 By Proposition \ref{tor13}, $\mc{G}_1$  is a $\p3$-bundle over $Y_b$. Denote by $\rho_1:\mc{G}_1 \to Y_b$ the corresponding bundle morphism. 
 For any $y \in Y_b$, denote by $\mc{G}_{1,y}:=\rho_1^{-1}(y)$ and $\alpha_{1,y}:\mc{G}_{1,y} \hookrightarrow \mc{G}_1$  the natural inclusion. 
  Note that, $\mc{G}_0 \cap \mc{G}_1$ is a divisor in $\mc{G}_1$. Thus, the cohomology class $[\mc{G}_0 \cap \mc{G}_1]$ is an element of $H^2(\mc{G}_1,\mb{Q})$.
  Moreover, for every $y \in Y_b$, $[(\mc{G}_0 \cap \mc{G}_1)_y]$ generate $H^2(\mc{G}_{1,y},\mb{Q})$ ($(\mc{G}_0 \cap \mc{G}_1)_y$ is a hypersurface in $P_{1,y}$).
  By the Leray-Hirsch theorem \cite[Theorem $7.33$]{v4} we have
 \[H^4(\mc{G}_1,\mb{Q}) \cong \rho_1^*H^4(Y_b,\mb{Q}) \oplus \rho_1^*H^2(Y_b,\mb{Q}) \cup [\mc{G}_0 \cap \mc{G}_1] \oplus \mb{Q}[\mc{G}_0 \cap \mc{G}_1]^2,\]
 where $\cup$ denotes cup-product.
 Since $H^4(\mc{G}_{1,y},\mb{Q})=H^4(\p3,\mb{Q}) \cong \mb{Q}$ generated by $[(\mc{G}_0 \cap \mc{G}_1)_y]^2$, 
 we have the following short exact sequence:
 \begin{equation*}
  0 \to H^4(Y_b,\mb{Q}) \oplus H^2(Y_b,\mb{Q})\xrightarrow{\alpha} H^4(\mc{G}_1,\mb{Q}) \xrightarrow{\alpha_{1,y}^*} H^4(\mc{G}_{1,y},\mb{Q}) \to 0,
 \end{equation*}
 where $\alpha(\xi,\gamma)=\rho_1^*(\xi)+\rho_1^*(\gamma) \cup [\mc{G}_0 \cap \mc{G}_1]$. 
   We claim that the following diagram of short exact sequences is commutative:
 \begin{equation}\label{tor19}
 \begin{diagram}
H^2(Y_b,\mb{Q})&\rInto^{\rho_{12}^*}\, &H^2(\mc{G}_0 \cap \mc{G}_1,\mb{Q})&\rTo^{\alpha_y^*}&H^2((\mc{G}_0 \cap \mc{G}_1)_y,\mb{Q})&\rTo&0\\
\dTo^{\beta}&\circlearrowleft&\dTo^{i_*}&\circlearrowleft&\dTo^{i_{y_*}}\\
H^4(Y_b,\mb{Q}) \oplus H^2(Y_b,\mb{Q})&\rInto^{\alpha}&H^4(\mc{G}_1,\mb{Q})&\rTo^{\alpha_{1,y}^*}& H^4(\mc{G}_{1,y},\mb{Q})&\rTo&0 
    \end{diagram}\end{equation}
 where $\beta(\xi)=(0,\xi)$, the top horizontal short exact sequence follows from Lemma \ref{tor18}
 and $i_{y_*}$ is the Gysin morphism induced by the inclusion of
 $(\mc{G}_0 \cap \mc{G}_1)_y$ into $\mc{G}_{1,y}$. Indeed, we observed in the proof of Lemma \ref{tor18} that 
 $H^2(Y_b,\mb{Q}), H^2(\mc{G}_0 \cap \mc{G}_1,\mb{Q}) \mbox{ and } H^2((\mc{G}_0 \cap \mc{G}_1)_y,\mb{Q})$ are generated by the cohomology class of 
 divisors (the corresponding varieties are rationally connected). The commutativity of the left square then follows from the observation that 
 for any invertible sheaf $\mc{L}$ on $Y_b$, we have \[i_{*} \rho_{12}^*([\mc{L}])=i_{*} i^* \rho_1^*([\mc{L}])=\rho_1^*([\mc{L}]) \cup [\mc{G}_0 \cap \mc{G}_1],\] where 
 the first equality follows from $\rho_1 \circ i=\rho_{12}$ and the second by the projection formula. The commutativity of the right hand square 
 follows from the definition of pull-back and push-forward of cycles (see  \cite[Theorem $6.2$(a)]{fult} for the general statement). This proves the claim.

  Clearly, $\beta$ is injective. Since the entries in the diagram are vector spaces, the two rows are in fact split short exact sequences.
  Then, the diagram \eqref{tor19} implies that $\ker(i_*) \cong \ker(i_{y_*})$.
  Since \[i_y=i|_{(\mc{G}_0 \cap \mc{G}_1)_y}:(\mc{G}_0 \cap \mc{G}_1)_y \hookrightarrow \mc{G}_{1,y}\] can be identified with the inclusion 
  $\mb{P}^1 \times \mb{P}^1 \hookrightarrow \p3$, under the Segre
  embedding, we have $\ker(i_{y_*}) \cong \mb{Q}$ (use $H^2(\mb{P}^1 \times \mb{P}^1,\mb{Q}) \cong \mb{Q} \oplus \mb{Q}, H^4(\p3,\mb{Q}) \cong \mb{Q}$
  and $i_{y_*}$ is surjective). This implies $\ker i_* \cong \mb{Q}$. This proves the proposition.
    \end{proof}
    
    \subsection{Specialization morphism}
 Recall, for any $t \in \Delta^*$, there exist natural specialization
 morphisms:
 \[\mr{sp}_1:H^1(\widetilde{\mc{X}}_0,\mb{Q}) \to H^1(\widetilde{\mc{X}}_t,\mb{Q}) \mbox{ and } \mr{sp}_2:H^3(\mc{G}_{X_0}(2,\mc{L}_0), \mb{Q}) \to H^3(\mc{G}(2,\mc{L})_t, \mb{Q})\]
 associated to the families $\widetilde{\pi}_1$ and $\pi_2$ respectively, 
 obtained by composing the natural inclusion of the special fiber
 $\widetilde{\mc{X}}_t$ (resp. $\mc{G}(2,\mc{L})_t$) into 
 $\widetilde{\mc{X}}$ (resp. $\mc{G}(2,\mc{L})$) with the retraction map
 to the central fiber $\widetilde{\mc{X}}_0$ (resp. $\mc{G}_{X_0}(2,\mc{L}_0)$). Unfortunately, the resulting specialization maps are not morphism of mixed Hodge structures. However, if one identifies
 $H^1(\widetilde{\mc{X}}_t,\mb{Q})$ and $H^3(\mc{G}(2,\mc{L})_t, \mb{Q})$ with $H^1(\widetilde{\mc{X}}_\infty, \mb{Q})$ and 
 $H^3(\mc{G}(2,\mc{L})_\infty, \mb{Q})$, then the resulting 
 (modified) specialization morphisms are morphisms of mixed Hodge structures (see \cite[Theorem $11.29$]{pet}).
  Furthermore, by the local invariant cycle theorem \cite[Theorem $11.43$]{pet}, we have the following exact sequences:
 \begin{align}
 &H^1(\widetilde{\mc{X}}_0,\mb{Q}) \xrightarrow{\mr{sp}_1} H^1(\widetilde{\mc{X}}_{\infty},\mb{Q}) \xrightarrow{T_{\widetilde{\mc{X}}}-\mr{Id}} H^1(\widetilde{\mc{X}}_{\infty},\mb{Q}) \mbox{ and } \label{t02}\\
 &H^3(\mc{G}_{X_0}(2,\mc{L}_0),\mb{Q}) \xrightarrow{\mr{sp}_2} H^3(\mc{G}(2,\mc{L})_{\infty},\mb{Q}) \xrightarrow{T_{\mc{G}(2,\mc{L})}-\mr{Id}} H^3(\mc{G}(2,\mc{L})_{\infty},\mb{Q}),\label{t03}
 \end{align}
 where $\mr{sp}_i$ denotes the (modified) specialization morphisms for $i=1,2$, which are morphisms of mixed Hodge structures 
as discussed above. See \cite[Example $3.5$]{ste1} for the description of the
 Hodge filtration on $H^1(\widetilde{\mc{X}}_0,\mb{C})$ and $H^3(\mc{G}_{X_0}(2,\mc{L}_0),\mb{C})$. 
By \cite[Theorem $5.39$]{pet}, note that \[H^1(\widetilde{\mc{X}}_0,\mb{Q})=W_1 H^1(\widetilde{\mc{X}}_0,\mb{Q}) \mbox{ and }
H^3(\mc{G}_{X_0}(2,\mc{L}_0), \mb{Q})=W_3 H^3(\mc{G}_{X_0}(2,\mc{L}_0), \mb{Q}).\]
Since the specialization morphisms $\mr{sp}_i$ are morphisms of mixed Hodge structures, we conclude:
\begin{prop}\label{t04}
 The specialization morphism $\mr{sp}_1$ (resp. $\mr{sp}_2$) factors through $W_1 H^1(\widetilde{\mc{X}}_\infty,\mb{Q})$
 (resp. $W_3 H^3(\mc{G}(2,\mc{L})_{\infty}, \mb{Q})$) and the induced morphisms
 \[\mr{sp}_1: H^1(\widetilde{\mc{X}}_0,\mb{Q}) \to W_1 H^1(\widetilde{\mc{X}}_\infty,\mb{Q}) \mbox{ and }\]
 \[ \mr{sp}_2: H^3(\mc{G}_{X_0}(2,\mc{L}_0), \mb{Q}) \to
  W_3 H^3(\mc{G}(2,\mc{L})_{\infty}, \mb{Q}) \]
  are isomorphisms. Moreover, the natural inclusions
  \[W_2 H^1(\widetilde{\mc{X}}_\infty,\mb{Q}) \hookrightarrow H^1(\widetilde{\mc{X}}_\infty,\mb{Q}) \mbox{ and } W_4 H^3(\mc{G}(2,\mc{L})_{\infty}, \mb{Q}) \hookrightarrow H^3(\mc{G}(2,\mc{L})_{\infty}, \mb{Q})\]
  are isomorphisms and $\mr{Gr}^W_2  H^1(\widetilde{\mc{X}}_\infty,\mb{Q})$ and $\mr{Gr}^W_4 H^3(\mc{G}(2,\mc{L})_{\infty}, \mb{Q})$ 
  are of $\mb{Q}$-dimension at most one.
\end{prop}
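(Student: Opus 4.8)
The plan is to establish both statements --- for $\mr{sp}_1$ on $H^1$ of the degeneration $\widetilde{\pi}_1$ and for $\mr{sp}_2$ on $H^3$ of $\pi_2$ --- by one and the same argument, playing the two weight spectral sequences of Proposition \ref{tor26} against each other. To treat the two cases uniformly, set $n=1$, $\mc{Y}_0=\widetilde{\mc{X}}_0$, $(Y_1,Y_2)=(\widetilde{X}_0,F)$ in the first case and $n=3$, $\mc{Y}_0=\mc{G}_{X_0}(2,\mc{L}_0)$, $(Y_1,Y_2)=(\mc{G}_0,\mc{G}_1)$ in the second; write $\mc{Y}_\infty$ for the corresponding base change to $\mf{h}$, $\mr{sp}$ for the (modified) specialization morphism, $T$ for the monodromy on $H^n(\mc{Y}_\infty,\mb{Q})$, and $N=\log T$. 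First I would record which weights occur in $H^n(\mc{Y}_\infty,\mb{Q})$: since ${}^{\infty}_{W}E_1^{p,q}=0$ for $|p|\ge 2$ and the limit weight spectral sequence degenerates at $E_2$, one has $\mr{Gr}^W_q H^n(\mc{Y}_\infty,\mb{Q})\cong{}^{\infty}_{W}E_2^{n-q,q}$, which vanishes unless $q\in\{n-1,n,n+1\}$. In particular $W_{n+1}H^n(\mc{Y}_\infty,\mb{Q})=H^n(\mc{Y}_\infty,\mb{Q})$, which is the assertion that the natural inclusion $W_{n+1}H^n(\mc{Y}_\infty,\mb{Q})\hookrightarrow H^n(\mc{Y}_\infty,\mb{Q})$ is an isomorphism. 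Next, $\mr{Gr}^W_{n+1}H^n(\mc{Y}_\infty,\mb{Q})\cong{}^{\infty}_{W}E_2^{-1,n+1}$ is the kernel of the differential $H^{n-1}(Y_1\cap Y_2)(-1)\to H^{n+1}(Y_1)(0)\oplus H^{n+1}(Y_2)(0)$, which is the pair of Gysin pushforwards into $Y_1$ and $Y_2$; hence this kernel is contained in $\ker\big(g_*\colon H^{n-1}(Y_1\cap Y_2)\to H^{n+1}(Y_2)\big)$ for the inclusion $g\colon Y_1\cap Y_2\hookrightarrow Y_2$. For $n=1$ this is $\ker\big(H^0(\widetilde{X}_0\cap F)\to H^2(\mb{P}^1)\big)\cong\mb{Q}$, and for $n=3$ it is $\ker\big(i_*\colon H^2(\mc{G}_0\cap\mc{G}_1,\mb{Q})\to H^4(\mc{G}_1,\mb{Q})\big)\cong\mb{Q}$ by Proposition \ref{tor14}. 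Therefore $\dim_{\mb{Q}}\mr{Gr}^W_{n+1}H^n(\mc{Y}_\infty,\mb{Q})\le 1$, which is the last assertion of the proposition.

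Next I would extract the monodromy. Because all weights of $H^n(\mc{Y}_\infty,\mb{Q})$ lie in $[n-1,n+1]$ and $N(W_i)\subseteq W_{i-2}$ (Remark \ref{tor25}(1)), we get $N(W_n)=0$ and $N^2=0$, so $T-\mr{Id}=\exp(N)-\mr{Id}=N$. Remark \ref{tor25}(2) with $l=1$ says $N\colon\mr{Gr}^W_{n+1}H^n(\mc{Y}_\infty,\mb{Q})\xrightarrow{\ \sim\ }\mr{Gr}^W_{n-1}H^n(\mc{Y}_\infty,\mb{Q})$, and combined with $N(W_n)=0$ this forces $\ker N=W_nH^n(\mc{Y}_\infty,\mb{Q})$. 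Plugging $\ker(T-\mr{Id})=\ker N=W_nH^n(\mc{Y}_\infty,\mb{Q})$ into the local invariant cycle sequences \eqref{t02} and \eqref{t03} gives $\mr{im}(\mr{sp})=W_nH^n(\mc{Y}_\infty,\mb{Q})$, so $\mr{sp}$ surjects onto $W_nH^n(\mc{Y}_\infty,\mb{Q})$. Since $H^n(\mc{Y}_0,\mb{Q})=W_nH^n(\mc{Y}_0,\mb{Q})$ by \cite[Theorem 5.39]{pet} and $\mr{sp}$ is a morphism of mixed Hodge structures, hence preserves $W_\bullet$, it factors through $W_nH^n(\mc{Y}_\infty,\mb{Q})$; this is the first assertion.

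It remains to upgrade surjectivity of $\mr{sp}\colon H^n(\mc{Y}_0,\mb{Q})\to W_nH^n(\mc{Y}_\infty,\mb{Q})$ to bijectivity, for which it suffices to check $\dim_{\mb{Q}}H^n(\mc{Y}_0,\mb{Q})=\dim_{\mb{Q}}W_nH^n(\mc{Y}_\infty,\mb{Q})$. From the weight spectral sequence of $\mc{Y}_0$, which likewise degenerates at $E_2$ and has $E_1$-terms vanishing for $p\notin\{0,1\}$, one reads $\dim_{\mb{Q}}H^n(\mc{Y}_0,\mb{Q})=\dim_{\mb{Q}}\ker(r^n)+\dim_{\mb{Q}}\mr{coker}(r^{n-1})$, where $r^k\colon H^k(Y_1)\oplus H^k(Y_2)\to H^k(Y_1\cap Y_2)$ is the restriction. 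On the limit side $\mr{Gr}^W_{n-1}H^n(\mc{Y}_\infty,\mb{Q})\cong{}^{\infty}_{W}E_2^{1,n-1}=\mr{coker}(r^{n-1})$, while $\mr{Gr}^W_nH^n(\mc{Y}_\infty,\mb{Q})\cong{}^{\infty}_{W}E_2^{0,n}$ is the quotient of $\ker(r^n)$ by the image of the Gysin differential $H^{n-2}(Y_1\cap Y_2)(-1)\to H^n(Y_1)\oplus H^n(Y_2)$; hence the two dimensions coincide precisely when $H^{n-2}(Y_1\cap Y_2,\mb{Q})=0$. For $n=1$ this group is $H^{-1}$ and vanishes trivially; for $n=3$ it is $H^1(\mc{G}_0\cap\mc{G}_1,\mb{Q})$, which vanishes because $\mc{G}_0\cap\mc{G}_1$ is a $\mb{P}^1\times\mb{P}^1$-bundle over the rationally connected variety $Y_b$, so that by the Leray--Hirsch theorem $H^1(\mc{G}_0\cap\mc{G}_1,\mb{Q})\cong H^1(Y_b,\mb{Q})=0$. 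Thus $\mr{sp}$ is bijective, and being a morphism of mixed Hodge structures it is then an isomorphism of mixed Hodge structures. The main obstacle is exactly this final dimension bookkeeping: one must keep the limit weight spectral sequence and the central-fiber weight spectral sequence apart and observe that the discrepancy between $H^n(\mc{Y}_0,\mb{Q})$ and $W_nH^n(\mc{Y}_\infty,\mb{Q})$ is carried entirely by the image of the Gysin differential out of $H^{n-2}(Y_1\cap Y_2)$ --- whose vanishing in the relevant degree is what yields injectivity of $\mr{sp}$, the bound on $\mr{Gr}^W_{n+1}$ having already been supplied by Proposition \ref{tor14}.
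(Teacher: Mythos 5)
Your proposal is correct, and it reaches the conclusion by a genuinely different mechanism than the paper for the key isomorphism statement. The paper's own proof never invokes the local invariant cycle theorem at this stage: it compares the two spectral sequences of Proposition \ref{tor26} directly, noting that the terms and $d_1$-differentials with $p\ge 0$ of $_{_W}E_1^{p,q}$ and $^{^\infty}_{_W}E_1^{p,q}$ agree and that the incoming Gysin source $^{^\infty}_{_W}E_1^{-1,n}$ vanishes in the relevant degree (trivially for $\widetilde{\pi}_1$, and because $H^1(\mc{G}_0\cap\mc{G}_1,\mb{Q})=0$ for $\pi_2$), so that $_{_W}E_2^{p,n-p}=\,^{^\infty}_{_W}E_2^{p,n-p}$ for $p\ge 0$ and $\mr{sp}$ is an isomorphism onto $W_n$ via its compatibility with the weight spectral sequences. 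You instead split the statement: surjectivity onto $W_n$ comes from the exact sequences \eqref{t02}--\eqref{t03} after you identify $\ker(T-\mr{Id})=\ker N=W_n$, which you get from the weight bounds $[n-1,n+1]$, the property $N(W_i)\subseteq W_{i-2}$, and the graded isomorphism of Remark \ref{tor25}(2) (note this also hands you $(T-\mr{Id})^2=0$, i.e.\ $T-\mr{Id}=N$, ahead of Theorem \ref{tor17}, with no circularity since only Remark \ref{tor25} and Proposition \ref{tor26} are used); injectivity then follows from the dimension count $\dim H^n(\mc{Y}_0)=\dim\ker(r^n)+\dim\mr{coker}(r^{n-1})=\dim W_nH^n(\mc{Y}_\infty)$, which again hinges on the same vanishing of $H^{n-2}(Y_1\cap Y_2,\mb{Q})$ used by the paper (your Leray--Hirsch justification of $H^1(\mc{G}_0\cap\mc{G}_1,\mb{Q})=0$ is a harmless variant of the paper's appeal to rational connectedness). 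What your route buys is that you never need to make explicit the functoriality of the specialization map with respect to the two weight spectral sequences, which the paper leaves implicit; what it costs is the extra input of the local invariant cycle theorem and unipotency of $T$, both of which are available in the paper before this point. The remaining assertions --- $W_{n+1}H^n(\mc{Y}_\infty,\mb{Q})=H^n(\mc{Y}_\infty,\mb{Q})$ and $\dim_{\mb{Q}}\mr{Gr}^W_{n+1}\le 1$ via the kernel of the Gysin map (two points into $\mb{P}^1$, respectively Proposition \ref{tor14}) --- are handled essentially as in the paper.
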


\begin{proof}
The weight filtration on $H^1(\widetilde{\mc{X}}_{\infty},\mb{C})$ and $H^3(\mc{G}(2,\mc{L})_\infty,\mb{C})$ induced by the limit weight spectral sequences 
  $ ^{^{\infty}}_{_W}E_1^{p,q}(\widetilde{\pi}_1)$ and $ ^{^{\infty}}_{_W}E_1^{p,q}(\pi_2)$ respectively, coincide with the monodromy weight filtration defined in Remark \ref{tor25} (see \cite[Corollary $11.41$]{pet}).
 Using the (limit) weight spectral sequence (Proposition \ref{tor26})  we then have the following (limit) weight decompositions:
 {\small \begin{align}
  H^1(\widetilde{\mc{X}}_0,\mb{Q}) &\cong \mr{Gr}_0^W H^1(\widetilde{\mc{X}}_0,\mb{Q}) \oplus \mr{Gr}_1^W H^1(\widetilde{\mc{X}}_0,\mb{Q}) \label{t05}\\
  H^1(\widetilde{\mc{X}}_\infty,\mb{Q}) &\cong \mr{Gr}_0^W H^1(\widetilde{\mc{X}}_\infty,\mb{Q}) \oplus \mr{Gr}_1^W H^1(\widetilde{\mc{X}}_\infty,\mb{Q}) \oplus \mr{Gr}_2^W H^1(\widetilde{\mc{X}}_\infty,\mb{Q}) \label{t07}\\
  H^3(\mc{G}_{X_0}(2,\mc{L}_0), \mb{Q}) &\cong \mr{Gr}_2^W H^3(\mc{G}_{X_0}(2,\mc{L}_0), \mb{Q}) \oplus \mr{Gr}_3^W H^3(\mc{G}_{X_0}(2,\mc{L}_0), \mb{Q}) \label{t08}\\
  H^3(\mc{G}(2,\mc{L})_{\infty}, \mb{Q}) &\cong \mr{Gr}_2^WH^3(\mc{G}(2,\mc{L})_{\infty}, \mb{Q}) \oplus \mr{Gr}_3^W H^3(\mc{G}(2,\mc{L})_{\infty}, \mb{Q}) \oplus \nonumber \\
   & \oplus \mr{Gr}_4^WH^3(\mc{G}(2,\mc{L})_{\infty}, \mb{Q}) \label{t09}\\
  \mr{Gr}_q^WH^{p+q}(\widetilde{\mc{X}}_0, \mb{Q})&=\, _{_W}E_2^{p,q}(\widetilde{\pi}_1), \, \mr{Gr}_q^WH^{p+q}(\widetilde{\mc{X}}_\infty, \mb{Q})=\, ^{^{\infty}}_{_W}E_2^{p,q}(\widetilde{\pi}_1), \label{t10}\\
  \mr{Gr}_q^WH^{p+q}(\mc{G}_{X_0}(2,\mc{L}_0), \mb{Q}) &=\, _{_W}E_2^{p,q}(\pi_2) \, \mbox{ and }\\
  \mr{Gr}_q^WH^{p+q}(\mc{G}(2,\mc{L})_{\infty}, \mb{Q}) &=\, ^{^{\infty}}_{_W}E_2^{p,q}(\pi_2) \mbox{ for all } p, q.
  \end{align}}
 As observed earlier, we have \[_{_W}E_1^{p,q}(\widetilde{\pi}_1)=\, ^{^{\infty}}_{_W}E_1^{p,q}(\widetilde{\pi}_1) \mbox{ and } _{_W}E_1^{p,q}(\pi_2)=\,  ^{^{\infty}}_{_W}E_1^{p,q}(\pi_2)\]
  for all $p \ge 0$ and the differential maps, 
 \[d_1:\,  _{_W}E_1^{p,q}(\widetilde{\pi}_1) \to \, _{_W}E_1^{p+1,q}(\widetilde{\pi}_1) \, \mbox{ and } d_1:\,  ^{^{\infty}}_{_W}E_1^{p,q}(\widetilde{\pi}_1) \to \,  ^{^{\infty}}_{_W}E_1^{p+1,q}(\widetilde{\pi}_1)\]
  coincide for all $p \ge 0$.
 Similarly for all $p \ge 0$, the differential maps
  \[d_1:\,  _{_W}E_1^{p,q}({\pi}_2) \to \, _{_W}E_1^{p+1,q}({\pi}_2) \, \mbox{ and } d_1:\,  ^{^{\infty}}_{_W}E_1^{p,q}(\pi_2) \to \,  ^{^{\infty}}_{_W}E_1^{p+1,q}(\pi_2) \mbox{ coincide}.\]
 Note that, $^{^{\infty}}_{_W}E_1^{-1,1}(\widetilde{\pi}_1) =0$ (by definition) and 
 \[^{^{\infty}}_{_W}E_1^{-1,3}(\pi_2)=H^1(\mc{G}_0 \cap \mc{G}_1,\mb{Q})=0\]
 ($\mc{G}_0 \cap \mc{G}_1$
 is rationally connected as observed in the proof of Lemma \ref{tor18}).
 This implies 
 \[_{_W}E_2^{p,1-p}(\widetilde{\pi}_1)=\, ^{^{\infty}}_{_W}E_2^{p,1-p}(\widetilde{\pi}_1) \mbox{ and }\, _{_W}E_2^{p,3-p}({\pi}_2)=\,  ^{^{\infty}}_{_W}E_2^{p,3-p}(\pi_2) \mbox{ for all } p \ge 0.\]
 Using \eqref{t07} and \eqref{t09}, we conclude that 
 $\mr{sp}_1: H^1(\widetilde{\mc{X}}_0,\mb{Q}) \to W_1 H^1(\widetilde{\mc{X}}_\infty,\mb{Q})$ and 
 \[\mr{sp}_2: H^3(\mc{G}_{X_0}(2,\mc{L}_0), \mb{Q}) \to
  W_3 H^3(\mc{G}(2,\mc{L})_{\infty}, \mb{Q}) \]
  are isomorphisms. Furthermore, the natural inclusions
  \[W_2 H^1(\widetilde{\mc{X}}_\infty,\mb{Q}) \hookrightarrow H^1(\widetilde{\mc{X}}_\infty,\mb{Q}) \mbox{ and } W_4 H^3(\mc{G}(2,\mc{L})_{\infty}, \mb{Q}) \hookrightarrow H^3(\mc{G}(2,\mc{L})_{\infty}, \mb{Q})\]
  are isomorphisms.  It remains to show that $\mr{Gr}^W_2  H^1(\widetilde{\mc{X}}_\infty,\mb{Q})$
  and $\mr{Gr}^W_4 H^3(\mc{G}(2,\mc{L})_{\infty}, \mb{Q})$ are of dimension at most one.
   Using Proposition \ref{tor26} we observe that  \[ ^{^{\infty}}_{_W}E_2^{-1,2}(\widetilde{\pi}_1)=
   \ker(\, ^{^{\infty}}_{_W}E_1^{-1,2}(\widetilde{\pi_1}) \xrightarrow{d_1}\, ^{^{\infty}}_{_W}E_1^{0,2}(\widetilde{\pi}_1))=\]
   \[=\ker(H^0(\widetilde{X}_0 \cap F,\mb{Q}) \xrightarrow{(i_{1_*}, i_{2_*})} H^2(\widetilde{X}_0,\mb{Q}) \oplus H^2(F,\mb{Q}))\]
  where $i_{1_*}$ (resp. $i_{2_*}$)
is the Gysin morphism from $H^0(\widetilde{X}_0 \cap F,\mb{Q})$ to $H^2(\widetilde{X}_0,\mb{Q})$ (resp. $H^2(F,\mb{Q})$)
induced by the inclusion maps \[i_1:\widetilde{X}_0 \cap F \hookrightarrow \widetilde{X}_0 \, (\mbox{resp. } i_2:\widetilde{X}_0 \cap F \hookrightarrow F).\]
 Since $\widetilde{X}_0 \cap F$ consists of $2$ points and the morphism $(i_{1_*},i_{2_*})$ is non-zero, one can check that 
 $\ker (i_{1_*},i_{2_*})$ is isomorphic to either $0$ or $\mb{Q}$. Therefore, $^{^{\infty}}_{_W}E_2^{-1,2}(\widetilde{\pi}_1)=\mr{Gr}_2^W  H^1(\widetilde{\mc{X}}_\infty,\mb{Q})$
 is of dimension at most $1$.
  Similarly, using Proposition \ref{tor26} we have,
 \[^{^{\infty}}_{_W}E_2^{-1,4}(\pi_2)=\ker(\, ^{^{\infty}}_{_W}E_2^{-1,4}(\pi_2) \to \, ^{^{\infty}}_{_W}E_2^{0,4}(\pi_2))=\]
 \[=\ker(H^2(\mc{G}_0 \cap \mc{G}_1,\mb{Q}) \xrightarrow{(j_{1_*},j_{2_*})} H^4(\mc{G}_0,\mb{Q}) \oplus H^4(\mc{G}_1,\mb{Q})),\]
   where $j_{1_*}$ and $j_{2_*}$ are the Gysin morphisms associated to the natural inclusions 
   \[j_1: \mc{G}_0 \cap \mc{G}_1 \hookrightarrow \mc{G}_0 \, \mbox{ and } j_2: \mc{G}_0 \cap \mc{G}_1 \hookrightarrow \mc{G}_1.\]
Proposition \ref{tor14} then implies that $\ker(j_{1_*},j_{2_*})$ is isomorphic to either $0$ or $\mb{Q}$.
Thus,  $^{^{\infty}}_{_W}E_2^{-1,4}(\pi_2)=\mr{Gr}_4^WH^3(\mc{G}(2,\mc{L})_{\infty}, \mb{Q})$  is of dimension at most $1$.
This completes the proof of the proposition.
 \end{proof}

 We are now ready to prove the main theorem of this section.
 
 \begin{thm}\label{tor17}
Under the natural specialization morphisms $\mr{sp}_i$ for $i=1, 2$, we have
\begin{enumerate}
 \item the natural morphism \[H^1(\widetilde{\mc{X}}_0,\mb{C})/F^1H^1(\widetilde{\mc{X}}_0,\mb{C}) \xrightarrow{\mr{sp}_1} H^1(\widetilde{\mc{X}}_\infty,\mb{C})/F^1H^1(\widetilde{\mc{X}}_\infty,\mb{C})\]
 is an isomorphism,
 $\mr{Gr}_2^W H^1(\widetilde{\mc{X}}_\infty, \mb{Q})$ is pure of type $(1,1)$
 in the sense that \[\mr{Gr}_F^1 \mr{Gr}_2^W H^1(\widetilde{\mc{X}}_\infty,\mb{C})=\mr{Gr}_2^W H^1(\widetilde{\mc{X}}_\infty,\mb{C})\] and $(T_{\widetilde{\mc{X}}}-\mr{Id})^2=0$,
 \item the morphism  \[\frac{H^3(\mc{G}_{X_0}(2,\mc{L}_0),\mb{C})}{F^2 H^3(\mc{G}_{X_0}(2,\mc{L}_0),\mb{C})} \xrightarrow{\mr{sp}_2} \frac{H^3(\mc{G}(2,\mc{L})_{\infty},\mb{C})}{F^2 H^3(\mc{G}(2,\mc{L})_{\infty},\mb{C})}\]
 is an isomorphism, 
 $\mr{Gr}_4^W H^3(\mc{G}(2,\mc{L})_{ \infty},\mb{Q})$ 
 is pure of type $(2,2)$ i.e., \[\mr{Gr}_F^2\mr{Gr}_4^W H^3(\mc{G}(2,\mc{L})_{ \infty},\mb{C})=\mr{Gr}_4^W H^3(\mc{G}(2,\mc{L})_{ \infty},\mb{C})\]
 and $(T_{\mc{G}(2,\mc{L})}-\mr{Id})^2$ vanishes.
\end{enumerate}
 \end{thm}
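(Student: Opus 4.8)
The plan is to derive both parts of the theorem formally from Proposition~\ref{t04}, the explicit shape of the Steenbrink spectral sequences in Proposition~\ref{tor26}, and the strict compatibility of morphisms of mixed Hodge structures with the Hodge filtration. First I would dispose of the monodromy statements: since the central fibers $\widetilde{\mc{X}}_0$ and $\mc{G}_{X_0}(2,\mc{L}_0)$ are reduced simple normal crossings divisors with exactly two components, Proposition~\ref{tor26} gives $^{^{\infty}}_{_W}E_1^{p,q}=0$ for $|p|\ge 2$, so after the $E_2$-degeneration the limit cohomology $H^1(\widetilde{\mc{X}}_\infty,\mb{Q})$ (resp.\ $H^3(\mc{G}(2,\mc{L})_\infty,\mb{Q})$) carries only weights $0,1,2$ (resp.\ $2,3,4$). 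Writing $N$ for the logarithm of the (unipotent) monodromy operator in each case, the relation $N(W_k)\subset W_{k-2}$ forces $N^2=0$, since the weights lie in a window of length $2$; as $N^2=0$ we get $T-\mr{Id}=\exp(N)-\mr{Id}=N$, whence $(T_{\widetilde{\mc{X}}}-\mr{Id})^2=0$ and $(T_{\mc{G}(2,\mc{L})}-\mr{Id})^2=0$.

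Next I would identify the top weight graded pieces. By Proposition~\ref{t04} the specialization maps $\mr{sp}_1$ and $\mr{sp}_2$ are injective morphisms of mixed Hodge structures with cokernels $\mr{Gr}_2^W H^1(\widetilde{\mc{X}}_\infty,\mb{Q})$ and $\mr{Gr}_4^W H^3(\mc{G}(2,\mc{L})_\infty,\mb{Q})$, respectively. Using the $E_2$-degeneration, $\mr{Gr}_2^W H^1(\widetilde{\mc{X}}_\infty,\mb{Q})=\, ^{^{\infty}}_{_W}E_2^{-1,2}(\widetilde{\pi}_1)$ is a sub-Hodge structure (the kernel of a Gysin differential) of $^{^{\infty}}_{_W}E_1^{-1,2}(\widetilde{\pi}_1)=H^0(\widetilde{X}_0\cap F,\mb{Q})(-1)$, which is of type $(1,1)$; hence $\mr{Gr}_2^W H^1(\widetilde{\mc{X}}_\infty,\mb{Q})$ is pure of type $(1,1)$, i.e.\ $\mr{Gr}_F^1\mr{Gr}_2^W H^1(\widetilde{\mc{X}}_\infty,\mb{C})=\mr{Gr}_2^W H^1(\widetilde{\mc{X}}_\infty,\mb{C})$. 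Similarly $\mr{Gr}_4^W H^3(\mc{G}(2,\mc{L})_\infty,\mb{Q})=\, ^{^{\infty}}_{_W}E_2^{-1,4}(\pi_2)$ is a sub-Hodge structure of $^{^{\infty}}_{_W}E_1^{-1,4}(\pi_2)=H^2(\mc{G}_0\cap\mc{G}_1,\mb{Q})(-1)$; since $\mc{G}_0\cap\mc{G}_1$ is rationally connected (as noted in the proof of Lemma~\ref{tor18}), $c_1:\mr{Pic}(\mc{G}_0\cap\mc{G}_1)\otimes\mb{Q}\to H^2(\mc{G}_0\cap\mc{G}_1,\mb{Q})$ is an isomorphism, so $H^2(\mc{G}_0\cap\mc{G}_1,\mb{Q})$ is spanned by classes of divisors and is of type $(1,1)$, its Tate twist of type $(2,2)$, and therefore $\mr{Gr}_4^W H^3(\mc{G}(2,\mc{L})_\infty,\mb{Q})$ is pure of type $(2,2)$, i.e.\ $\mr{Gr}_F^2\mr{Gr}_4^W H^3(\mc{G}(2,\mc{L})_\infty,\mb{C})=\mr{Gr}_4^W H^3(\mc{G}(2,\mc{L})_\infty,\mb{C})$.

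Finally I would feed this into the short exact sequences of mixed Hodge structures supplied by Proposition~\ref{t04},
\[0\to H^1(\widetilde{\mc{X}}_0,\mb{Q})\xrightarrow{\mr{sp}_1} H^1(\widetilde{\mc{X}}_\infty,\mb{Q})\to \mr{Gr}_2^W H^1(\widetilde{\mc{X}}_\infty,\mb{Q})\to 0\]
and
\[0\to H^3(\mc{G}_{X_0}(2,\mc{L}_0),\mb{Q})\xrightarrow{\mr{sp}_2} H^3(\mc{G}(2,\mc{L})_\infty,\mb{Q})\to \mr{Gr}_4^W H^3(\mc{G}(2,\mc{L})_\infty,\mb{Q})\to 0.\]
Tensoring with $\mb{C}$ and passing to the quotients by $F^1$, resp.\ $F^2$, the strict compatibility of morphisms of mixed Hodge structures with $F^\bullet$ keeps the sequences exact, and by the previous paragraph the rightmost terms vanish after these quotients; hence $\mr{sp}_1$ and $\mr{sp}_2$ induce the asserted isomorphisms $H^1(\widetilde{\mc{X}}_0,\mb{C})/F^1 \xrightarrow{\sim} H^1(\widetilde{\mc{X}}_\infty,\mb{C})/F^1$ and $H^3(\mc{G}_{X_0}(2,\mc{L}_0),\mb{C})/F^2 \xrightarrow{\sim} H^3(\mc{G}(2,\mc{L})_\infty,\mb{C})/F^2$.

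The hard part is the purity of $\mr{Gr}_4^W H^3(\mc{G}(2,\mc{L})_\infty,\mb{Q})$: everything else is formal, but this step rests on the structural fact that $\mc{G}_0\cap\mc{G}_1$ is a $\mb{P}^1\times\mb{P}^1$-bundle over the rationally connected moduli space $M_{\widetilde{X}_0}(2,\widetilde{\mc{L}}_0)$, entering both through Lemma~\ref{tor18} and, earlier, through Proposition~\ref{tor14} in the proof of Proposition~\ref{t04}. The analogous statement for the family of curves is by contrast immediate, the relevant graded piece being visibly a Tate twist of $H^0$ of the two points of $\widetilde{X}_0\cap F$.
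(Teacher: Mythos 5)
Your proposal is correct, but it reaches the three assertions by a genuinely different route than the paper. For the purity statements the paper combines the bound $\dim_{\mb{Q}}\mr{Gr}^W_2\le 1$, $\dim_{\mb{Q}}\mr{Gr}^W_4\le 1$ from Proposition \ref{t04} (which rests on the Gysin-kernel computation of Proposition \ref{tor14}) with the vanishings $F^2H^1(\widetilde{\mc{X}}_\infty,\mb{C})=0$ and $F^3H^3(\mc{G}(2,\mc{L})_\infty,\mb{C})=0$, deduced from the \emph{nearby smooth fibres} (rational connectedness of $\mc{G}(2,\mc{L})_t$ kills $H^{3,0}$, then \cite[Corollary $11.24$]{pet}); you instead note that $\mr{Gr}^W_2$ and $\mr{Gr}^W_4$ are sub-Hodge structures of the terms $^{^{\infty}}_{_W}E_1^{-1,2}$ and $^{^{\infty}}_{_W}E_1^{-1,4}$, i.e.\ Tate twists of $H^0$ of two points and of $H^2$ of the rationally connected variety $\mc{G}_0\cap\mc{G}_1$ (type $(1,1)$ by the $c_1$-argument in Lemma \ref{tor18}), so your argument uses only the central-fibre geometry and bypasses both Proposition \ref{tor14} and the general-fibre Hodge-theoretic input (it only needs the parts of Proposition \ref{t04} not depending on the dimension bound). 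For the quotient isomorphisms the paper runs the snake lemma on the diagram \eqref{t12}, whereas you use exactness of $(-)_{\mb{C}}/F^p$ on short exact sequences of mixed Hodge structures via strictness; these are essentially equivalent in content. For $(T-\mr{Id})^2=0$ the paper factors $T-\mr{Id}$ through the at-most-one-dimensional graded piece and uses unipotency, while you observe that the weights lie in a window of length two, so $N^2=0$ and $T=\mr{Id}+N$; the only small caveat is that this uses $N W_i\subset W_{i-2}$ for \emph{all} $i$ (the defining property of the monodromy weight filtration, \cite[Lemma-Definition $11.9$]{pet}), not merely for $i\ge 2$ as recorded in Remark \ref{tor25} — harmless, but you should cite the full property (for $H^3$ the recorded range suffices since $W_1=0$, but for $H^1$ you need $N(W_0)=0$).
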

 
 \begin{proof}
  Since $\widetilde{\mc{X}}_t$ is smooth for all $t \in \Delta^*$, we have 
   $\mr{Gr}_F^p H^1(\widetilde{\mc{X}}_t, \mb{C})=0$
   for all $p \ge 2$. 
   As $\mc{G}(2,\mc{L})_t$ is smooth, rationally connected for all $t \in \Delta^*$, we have by \cite[Corollary $4.18$]{deba}
   $\mr{Gr}_F^3 H^3(\mc{G}(2,\mc{L})_t, \mb{C})=H^{3,0}(\mc{G}(2,\mc{L})_t, \mb{C})=
H^0(\Omega^3_{\mc{G}(2,\mc{L})_t})=0$  and  $\mr{Gr}_F^p H^3(\mc{G}(2,\mc{L})_t, \mb{C})=0,\, p \ge 4.$
   Then \cite[Corollary $11.24$]{pet} implies that 
   $\mr{Gr}_F^p H^1(\widetilde{\mc{X}}_\infty, \mb{C})=0$ (resp. $\mr{Gr}_F^p H^3(\mc{G}(2,\mc{L})_\infty, \mb{C})=0$) 
   for all $p \ge 2$ (resp. $p \ge 3$). Thus, $F^2 H^1(\widetilde{\mc{X}}_\infty, \mb{C})=0=F^3 H^3(\mc{G}(2,\mc{L})_\infty, \mb{C})=0$,
   which means 
  \begin{eqnarray}\label{t11}
  &\mr{Gr}_F^1\mr{Gr}_2^W  H^1(\widetilde{\mc{X}}_\infty,\mb{Q})=F^1\mr{Gr}_2^W  H^1(\widetilde{\mc{X}}_\infty,\mb{Q}) \mbox{ and  }\\
  &\mr{Gr}_F^2 \mr{Gr}_4^WH^3(\mc{G}(2,\mc{L})_{\infty}, \mb{Q})=F^2\mr{Gr}_4^WH^3(\mc{G}(2,\mc{L})_{\infty}, \mb{Q}).\nonumber
   \end{eqnarray}
 Consider now the following diagram of short exact sequences:
 \begin{equation}\label{t12}
 \begin{diagram}
  F^mW_pH^p(A,\mb{C})&\rInto& F^mW_{p+1}H^p(A,\mb{C})&\rOnto& F^m \mr{Gr}_{p+1}^W  H^p(A,\mb{C})\\
  \dInto&\circlearrowleft&\dInto&\circlearrowleft&\dTo^{f_0}\\
   W_p H^p(A,\mb{C})&\rInto&  W_{p+1} H^p(A,\mb{C})&\rOnto& \mr{Gr}_{p+1}^W  H^p(A,\mb{C})
  \end{diagram}
 \end{equation}
 for the two cases $\{A=\widetilde{\mc{X}}_\infty, m=1=p\}$ and $\{A=\mc{G}(2,\mc{L})_\infty, m=2, p=3\}$, 
 where \[F^mW_jH^p(A,\mb{C}):= F^mH^p(A,\mb{C}) \cap W_j H^p(A,\mb{C}).\]
 Since $\mr{Gr}_2^W  H^1(\widetilde{\mc{X}}_\infty,\mb{Q})$ and $\mr{Gr}_4^WH^3(\mc{G}(2,\mc{L})_{\infty}, \mb{Q})$ are pure Hodge structures of 
 dimension at most one (Proposition \ref{t04}), we have 
  $\mr{Gr}_2^W  H^1(\widetilde{\mc{X}}_\infty,\mb{C}) = \mr{Gr}_F^1 \mr{Gr}_2^W  H^1(\widetilde{\mc{X}}_\infty,\mb{C})$ and 
  \[\mr{Gr}_4^WH^3(\mc{G}(2,\mc{L})_{\infty}, \mb{C}) = \mr{Gr}_F^2 \mr{Gr}_4^W H^3(\mc{G}(2,\mc{L})_{\infty}, \mb{C}).\]
Using \eqref{t11}, this implies $f_0$ is an isomorphism in both the cases.
Applying Snake lemma to the diagram \eqref{t12} we conclude that in both cases
{\small \[\frac{W_p H^p(A,\mb{C})}{ F^mH^p(A,\mb{C}) \cap W_p H^p(A,\mb{C})} \cong \frac{W_{p+1} H^p(A,\mb{C})}{F^mH^p(A,\mb{C}) \cap W_{p+1} H^p(A,\mb{C})} \cong 
 \frac{H^p(A,\mb{C})}{F^mH^p(A,\mb{C})}\] }
 where the last isomorphism follows from $W_{p+1} H^p(A,\mb{C}) \cong H^p(A,\mb{C})$ (Proposition \ref{t04}).  
 Proposition \ref{t04} further implies 
 \begin{align*}
  & \frac{H^1(\widetilde{\mc{X}}_0,\mb{C})}{F^1H^1(\widetilde{\mc{X}}_0,\mb{C})} \xrightarrow[\sim]{\mr{sp}_1} 
  \frac{W_1 H^1(\widetilde{\mc{X}}_\infty,\mb{C})}{ F^1 H^1(\widetilde{\mc{X}}_\infty,\mb{C}) \cap W_1 H^1(\widetilde{\mc{X}}_\infty,\mb{C})} \cong 
  \frac{H^1(\widetilde{\mc{X}}_\infty,\mb{C})}{ F^1 H^1(\widetilde{\mc{X}}_\infty,\mb{C})} \mbox{ and }\\
  & \frac{H^3(\mc{G}_{X_0}(2,\mc{L}_0), \mb{C})}{F^2 H^3(\mc{G}_{X_0}(2,\mc{L}_0), \mb{C})} \xrightarrow[\sim]{\mr{sp}_2}
  \frac{W_3 H^3(\mc{G}(2,\mc{L})_{\infty}, \mb{Q})}{F^2 W_3 H^3(\mc{G}(2,\mc{L})_{\infty}, \mb{C})} \cong 
   \frac{H^3(\mc{G}(2,\mc{L})_{\infty}, \mb{Q})}{F^2 H^3(\mc{G}(2,\mc{L})_{\infty}, \mb{C})},
   \end{align*}
  where $F^2W_3 H^3(\mc{G}(2,\mc{L})_{\infty}, \mb{C}):=F^2 H^3(\mc{G}(2,\mc{L})_{\infty}, \mb{C}) \cap W_3 H^3(\mc{G}(2,\mc{L})_{\infty}, \mb{C})$.
It now remains to check that $(T_{\widetilde{\mc{X}}}-\mr{Id})^2=0=(T_{\mc{G}(2,\mc{L})}-\mr{Id})^2$.
Using Proposition \ref{t04} and the exact sequences \eqref{t02} and \eqref{t03}, we have
\[\ker (T_{\widetilde{\mc{X}}}-\mr{Id})=\Ima \mr{sp}_1=W_1H^1(\widetilde{\mc{X}}_\infty,\mb{Q}) \mbox{ and }\]
\[ \ker (T_{\mc{G}(2,\mc{L})}-\mr{Id})=\Ima \mr{sp}_2=W_3H^3(\mc{G}(2,\mc{L})_{ \infty},\mb{Q}).\]
 Hence, $T_{\widetilde{\mc{X}}}-\mr{Id}$ (resp. $T_{\mc{G}(2,\mc{L})}-\mr{Id}$) factors through $\mr{Gr}^W_2H^1(\widetilde{\mc{X}}_\infty,\mb{Q})$ 
 (resp. $\mr{Gr}^W_4H^3(\mc{G}(2,\mc{L})_{ \infty},\mb{Q})$).
 Recall, $\mr{Gr}^W_2H^1(\widetilde{\mc{X}}_\infty,\mb{Q})$ (resp. $\mr{Gr}^W_4H^3(\mc{G}(2,\mc{L})_{ \infty},\mb{Q})$)  
 is either trivial or isomorphic to $\mb{Q}$ (Proposition \ref{t04}).
  Now, consider the composed morphisms 
  {\small \begin{align*}
  T_1\, :& \, \mr{Gr}^W_2H^1(\widetilde{\mc{X}}_\infty,\mb{Q}) \xrightarrow{T_{\widetilde{\mc{X}}}-\mr{Id}} H^1(\widetilde{\mc{X}}_\infty,\mb{Q}) \xrightarrow{\pr_1} \mr{Gr}^W_2H^1(\widetilde{\mc{X}}_\infty,\mb{Q}) \mbox{ and }\\
   T_2\, :& \, \mr{Gr}^W_4H^3(\mc{G}(2,\mc{L})_{ \infty},\mb{Q}) \xrightarrow{T_{\mc{G}(2,\mc{L})}-\mr{Id}} H^3(\mc{G}(2,\mc{L})_{ \infty},\mb{Q}) \xrightarrow{\pr_2} \mr{Gr}^W_4H^3(\mc{G}(2,\mc{L})_{ \infty},\mb{Q}),
    \end{align*}}
  where $\pr_1$ and $\pr_2$ are natural projections. Since $T_1$ (resp. $T_2$) is a morphism of $\mb{Q}$-vector spaces of dimension at most one, 
  $T_1^N=0$ (resp. $T_2^N=0$) for some $N$ if and only if $T_1=0$ (resp. $T_2=0$). 
  As the monodromy operators $T_{\widetilde{\mc{X}}}$ and
  $T_{\mc{G}(2,\mc{L})}$ are unipotent, there exists $N$ such that 
  \[(T_{\widetilde{\mc{X}}}-\mr{Id})^N=0=(T_{\mc{G}(2,\mc{L})}-\mr{Id})^N\]
  which implies $T_1^N=0=T_2^N$.
  Thus, $T_1=0=T_2$. This implies, \[\Ima (T_{\widetilde{\mc{X}}}-\mr{Id}) \subset W_1H^1(\widetilde{\mc{X}}_\infty,\mb{Q}) \mbox{ and  }
  \Ima (T_{\mc{G}(2,\mc{L})}-\mr{Id}) \subset W_3H^3(\mc{G}(2,\mc{L})_{ \infty},\mb{Q}).\] In other words, $\Ima (T_{\widetilde{\mc{X}}}-\mr{Id}) \subset \ker(T_{\widetilde{\mc{X}}}-\mr{Id})$
  and $\Ima (T_{\mc{G}(2,\mc{L})}-\mr{Id}) \subset \ker (T_{\mc{G}(2,\mc{L})}-\mr{Id})$. Therefore, $(T_{\widetilde{\mc{X}}}-\mr{Id})^2=0=(T_{\mc{G}(2,\mc{L})}-\mr{Id})^2$.
  This proves the theorem.
   \end{proof}
   
   \section{N\'{e}ron model for families of intermediate Jacobians}\label{sec4}
 
 In this section, we compare the various N\'{e}ron models for families of intermediate Jacobians.
 The N\'{e}ron model of a family of intermediate Jacobians over a punctured disc $\Delta^*$, is its extension to the entire disc $\Delta$.
 We use the same notations as in \S \ref{sec3}.
 
   Denote by $\ov{\mb{H}}_{\widetilde{\mc{X}}}^1:=j_*\mb{H}_{\widetilde{\mc{X}}_{\Delta^*}}^1 \mbox{ and } \ov{\mb{H}}^3_{\mc{G}(2,\mc{L})}:= j_*\mb{H}^3_{\mc{G}(2,\mc{L})_{\Delta^*}}, \mbox{ where } j:\Delta^* \hookrightarrow \Delta$ is the natural inclusion.
  By the Hodge decomposition for all $t \in \Delta^*$, we have 
 \[H^1(\mc{X}_t,\mb{Z}) \cap F^1H^1(\mc{X}_t,\mb{C})=0 \mbox{ and } H^3(\mc{G}(2,\mc{L})_t,\mb{Z}) \cap F^2H^3(\mc{G}(2,\mc{L})_t,\mb{C})=0.\]
 Thus the natural morphisms,
 \[H^1(\mc{X}_t,\mb{Z}) \to \frac{H^1(\mc{X}_t,\mb{C})}{F^1H^1(\mc{X}_t,\mb{C})} \mbox{ and } H^3(\mc{G}(2,\mc{L})_t,\mb{Z}) \to 
 \frac{H^3(\mc{G}(2,\mc{L})_t,\mb{C})}{F^2H^3(\mc{G}(2,\mc{L})_t,\mb{C})}\]
 are injective. This induces natural injective morphisms,
 \[{\Phi}_1:\mb{H}_{\widetilde{\mc{X}}_{\Delta^*}}^1 \to \mc{H}_{\widetilde{\mc{X}}_{\Delta^*}}^1/F^1\mc{H}_{\widetilde{\mc{X}}_{\Delta^*}}^1 \mbox{ and } 
 {\Phi}_2: \mb{H}^3_{\mc{G}(2,\mc{L})_{\Delta^*}} \to \mc{H}^3_{\mc{G}(2,\mc{L})_{\Delta^*}}/F^2\mc{H}^3_{\mc{G}(2,\mc{L})_{\Delta^*}}.\]
 Since $F^p\ov{\mc{H}}_{\widetilde{\mc{X}}}^1= j_*\left(F^p\mc{H}_{\widetilde{\mc{X}}_{\Delta^*}}^1\right) \cap  \ov{\mc{H}}_{\widetilde{\mc{X}}}^1$ and 
   \[F^p\ov{\mc{H}}^3_{\mc{G}(2,\mc{L})}:= j_*\left(F^p\mc{H}^3_{\mc{G}(2,\mc{L})_{\Delta^*}}\right) \cap \ov{\mc{H}}^3_{\mc{G}(2,\mc{L})}\]
  are vector bundles, one can immediately check that the natural morphisms,
 \[\ov{\Phi}_1:\ov{\mb{H}}_{\widetilde{\mc{X}}}^1 \to \ov{\mc{H}}_{\widetilde{\mc{X}}}^1/F^1\ov{\mc{H}}_{\widetilde{\mc{X}}}^1 \mbox{ and } 
 \ov{\Phi}_2: \ov{\mb{H}}^3_{\mc{G}(2,\mc{L})} \to \ov{\mc{H}}^3_{\mc{G}(2,\mc{L})}/F^2\ov{\mc{H}}^3_{\mc{G}(2,\mc{L})}\]
 extending $\Phi_1$ and $\Phi_2$ respectively, are injective.
 Denote by   ${\mc{J}}^1_{\widetilde{\mc{X}}_{\Delta^*}}:=\mr{coker}({\Phi}_1),$ 
 \[{\mc{J}}^2_{\mc{G}(2,\mc{L})_{\Delta^*}}:=\mr{coker}({\Phi}_2), \ov{\mc{J}}^1_{\widetilde{\mc{X}}}:=\mr{coker}(\ov{\Phi}_1) \mbox{ and } \ov{\mc{J}}^2_{\mc{G}(2,\mc{L})}:=\mr{coker}(\ov{\Phi}_2).\]
  Note that for any $t \in \Delta^*$, we have the following fibers
  \begin{align*}
   & \ov{\mc{J}}^1_{\widetilde{\mc{X}}} \otimes k(t) =J^1(\mc{X}_t)=\frac{H^1(\mc{X}_t,\mb{C})}{F^1H^1(\mc{X}_t,\mb{C})+H^1(\mc{X}_t,\mb{Z})} \mbox{ and }\\
 & \ov{\mc{J}}^2_{\mc{G}(2,\mc{L})} \otimes k(t) =J^2(\mc{G}(2,\mc{L})_{t})=\frac{H^3(\mc{G}(2,\mc{L})_{t},\mb{C})}{F^2 H^3(\mc{G}(2,\mc{L})_{t},\mb{C})+H^3(\mc{G}(2,\mc{L})_{t},\mb{Z})}.
     \end{align*}
 Then, $\mbf{J}^1_{\widetilde{\mc{X}}}:=\bigcup\limits_{t \in \Delta^*} J^1(\mc{X}_t)$ and $\mbf{J}^2_{\mc{G}(2,\mc{L})}:=\bigcup\limits_{t \in \Delta^*} 
 J^2(\mc{G}(2,\mc{L})_t)$ has naturally the structure of a complex manifold such that 
 \[\mbf{J}^1_{\widetilde{\mc{X}}} \to \Delta^* \mbox{ and } \mbf{J}^2_{\mc{G}(2,\mc{L})} \to \Delta^*\]
 are analytic fibre spaces of complex Lie groups with $\mo_{\mbf{J}^1_{\widetilde{\mc{X}}}} \cong \mc{J}^1_{\widetilde{\mc{X}}_{\Delta^*}}$
 and $\mo_{\mbf{J}^2_{\mc{G}(2,\mc{L})}} \cong {\mc{J}}^2_{\mc{G}(2,\mc{L})_{\Delta^*}}$ as sheaves of abelian groups.
 
  \begin{thm}\label{ner02}
   The family of intermediate Jacobians  $\mbf{J}^1_{\widetilde{\mc{X}}}$ and $\mbf{J}^2_{\mc{G}(2,\mc{L})}$ over $\Delta^*$, 
 extend holomorphically and canonically to $\ov{\mbf{J}}^1_{\widetilde{\mc{X}}}$ and $\ov{\mbf{J}}^2_{\mc{G}(2,\mc{L})}$ over $\Delta$.
 Furthermore, $\ov{\mbf{J}}^1_{\widetilde{\mc{X}}}$ and $\ov{\mbf{J}}^2_{\mc{G}(2,\mc{L})}$ has the structure of smooth, complex Lie groups over $\Delta$ with 
 a natural isomorphism \[\mo_{\ov{\mbf{J}}^1_{\widetilde{\mc{X}}}} \cong \ov{\mc{J}}^1_{\widetilde{\mc{X}}} \mbox{ and } \mo_{\ov{\mbf{J}}^2_{\mc{G}(2,\mc{L})}}
                        \cong \ov{\mc{J}}^2_{\mc{G}(2,\mc{L})}\]
 of sheaves of abelian groups.
  \end{thm}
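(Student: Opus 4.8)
The plan is to realize $\ov{\mbf{J}}^1_{\widetilde{\mc{X}}}$ and $\ov{\mbf{J}}^2_{\mc{G}(2,\mc{L})}$ as the complex Lie groups over $\Delta$ produced by the constructions of Clemens \cite{cle2} and Zucker \cite{zuck} from the polarized variations of Hodge structure $\mb{H}^1_{\widetilde{\mc{X}}_{\Delta^*}}$ (of weight $1$) and $\mb{H}^3_{\mc{G}(2,\mc{L})_{\Delta^*}}$ (of weight $3$). Concretely, over $\Delta^*$ the sheaf of local holomorphic sections of $\mbf{J}^1_{\widetilde{\mc{X}}}$ (resp.\ $\mbf{J}^2_{\mc{G}(2,\mc{L})}$) is, by definition, $\mc{J}^1_{\widetilde{\mc{X}}_{\Delta^*}}=\mr{coker}(\Phi_1)$ (resp.\ $\mc{J}^2_{\mc{G}(2,\mc{L})_{\Delta^*}}=\mr{coker}(\Phi_2)$); since $\ov{\Phi}_1,\ov{\Phi}_2$ were already checked above to be injective morphisms of sheaves over all of $\Delta$, the cokernel sheaves $\ov{\mc{J}}^1_{\widetilde{\mc{X}}},\ov{\mc{J}}^2_{\mc{G}(2,\mc{L})}$ are well defined and restrict to the former over $\Delta^*$. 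Using the identifications \eqref{tor23} of the central fibres of the canonical extensions with the limit cohomology, together with the fact that the stalk $(j_*\mb{H}_{\widetilde{\mc{X}}_{\Delta^*}}^1)_0$ equals the group of monodromy invariants $H^1(\widetilde{\mc{X}}_\infty,\mb{Z})^{T_{\widetilde{\mc{X}}}}$ (and likewise in the other case), the stalk at $0$ is
\[
\big(\ov{\mc{J}}^1_{\widetilde{\mc{X}}}\big)_0 \;\cong\; \frac{H^1(\widetilde{\mc{X}}_\infty,\mb{C})}{F^1 H^1(\widetilde{\mc{X}}_\infty,\mb{C}) + H^1(\widetilde{\mc{X}}_\infty,\mb{Z})^{T_{\widetilde{\mc{X}}}}},
\]
and analogously $\big(\ov{\mc{J}}^2_{\mc{G}(2,\mc{L})}\big)_0 \cong H^3(\mc{G}(2,\mc{L})_\infty,\mb{C})/\big(F^2 H^3(\mc{G}(2,\mc{L})_\infty,\mb{C}) + H^3(\mc{G}(2,\mc{L})_\infty,\mb{Z})^{T_{\mc{G}(2,\mc{L})}}\big)$. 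Thus the remaining content is to show that these cokernel sheaves are the sheaves of local holomorphic sections of complex manifolds smooth over $\Delta$ and fibered in complex Lie groups.

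For this I would invoke Clemens \cite{cle2} (and Zucker \cite{zuck}): for a polarized variation of Hodge structure of odd weight over $\Delta^*$ with unipotent monodromy $T$ satisfying $(T-\mr{Id})^2=0$ and whose top weight-graded piece of the limit mixed Hodge structure is pure of Hodge--Tate type, the associated family of intermediate Jacobians extends canonically and holomorphically over $\Delta$ to a smooth complex Lie group whose sheaf of sections is exactly $\ov{\mc{H}}/(F^\bullet\ov{\mc{H}}+j_*\mb{H})$. Theorem \ref{tor17}(1) gives $(T_{\widetilde{\mc{X}}}-\mr{Id})^2=0$ and that $\mr{Gr}^W_2 H^1(\widetilde{\mc{X}}_\infty,\mb{Q})$ is pure of type $(1,1)$; Theorem \ref{tor17}(2) gives $(T_{\mc{G}(2,\mc{L})}-\mr{Id})^2=0$ and that $\mr{Gr}^W_4 H^3(\mc{G}(2,\mc{L})_\infty,\mb{Q})$ is pure of type $(2,2)$. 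Since both variations are of geometric origin --- they come from the regular families $\widetilde{\pi}_1:\widetilde{\mc{X}}\to\Delta$ and $\pi_2:\mc{G}(2,\mc{L})\to\Delta$ with reduced simple normal crossings central fibres --- any admissibility requirements of \cite{cle2,zuck} hold automatically. Applying the construction to each variation therefore yields smooth complex Lie groups $\ov{\mbf{J}}^1_{\widetilde{\mc{X}}},\ov{\mbf{J}}^2_{\mc{G}(2,\mc{L})}$ over $\Delta$ extending $\mbf{J}^1_{\widetilde{\mc{X}}},\mbf{J}^2_{\mc{G}(2,\mc{L})}$, with $\mo_{\ov{\mbf{J}}^1_{\widetilde{\mc{X}}}}\cong\ov{\mc{J}}^1_{\widetilde{\mc{X}}}$ and $\mo_{\ov{\mbf{J}}^2_{\mc{G}(2,\mc{L})}}\cong\ov{\mc{J}}^2_{\mc{G}(2,\mc{L})}$ as sheaves of abelian groups; canonicity is inherited from that of Deligne's canonical extension $\ov{\mc{H}}$, of $F^\bullet\ov{\mc{H}}=j_*F^\bullet\cap\ov{\mc{H}}$, and of $j_*\mb{H}$, none of which involves any choice. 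That Clemens's and Zucker's constructions agree in our setting is then recorded separately in Remark \ref{nmequiv}.

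I expect the main obstacle to be precisely the assertion, used above, that the cokernel sheaf is the section sheaf of a \emph{Hausdorff} manifold smooth over $\Delta$: a priori the image of $j_*\mb{H}$ in $\ov{\mc{H}}/F^\bullet\ov{\mc{H}}$ need not be relatively discrete near $0$, in which case the quotient fails to be separated and fibered in Lie groups. This is exactly where the vanishing $(T-\mr{Id})^2=0$, i.e.\ $N^2=0$ for $N=\log T$, is indispensable: by Schmid's nilpotent orbit theorem \cite{schvar} the Hodge filtration near $0$ is governed by the nilpotent orbit $\exp(zN)F^\bullet_\infty$ (with $t=e^{2\pi i z}$), which for $N^2=0$ is affine-linear in $z$, so the monodromy acts on the central fibre of $\ov{\mc{H}}/F^\bullet$ only through the one-dimensional top weight-graded piece, and purity of that piece (of type $(1,1)$, resp.\ $(2,2)$, by Theorem \ref{tor17}) keeps the image of the integral lattice there closed and relatively discrete; the quotient is then a smooth complex Lie group over $\Delta$ as required. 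In short, all the genuinely hard analytic input has already been carried out in Theorem \ref{tor17}, and the present proof consists of feeding that input into the Clemens--Zucker machine and reading off the structure sheaves.
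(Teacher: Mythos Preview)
Your overall strategy is the same as the paper's --- feed the conclusions of Theorem~\ref{tor17} into the Clemens--Zucker machine --- but you are missing one genuine hypothesis that the paper verifies and that your discussion of Hausdorffness does not supply. The references the paper invokes (\cite[Proposition~II.A.8, Theorem~II.B.9]{green}, \cite[Propositions~2.2, 2.7]{sai}) require, in addition to $(T-\mr{Id})^2=0$ and the Hodge--Tate purity of the top graded piece, the vanishing of the finite group
\[
G \;=\; \frac{(T-\mr{Id})H_{\mb{Q}} \cap H_{\mb{Z}}}{(T-\mr{Id})H_{\mb{Z}}}
\]
for each of the two variations. This group is precisely the discrepancy between Zucker's extension (whose sheaf of sections is the cokernel $\ov{\mc{J}}$ you write down) and Clemens's extension (which is a smooth complex Lie group over $\Delta$); see Remark~\ref{nmequiv}. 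The theorem asserts \emph{both} properties simultaneously, so one needs $G=0$. The conditions you extract from Theorem~\ref{tor17} are purely Hodge-theoretic over $\mb{Q}$ and say nothing about whether $(T-\mr{Id})H_{\mb{Z}}$ is saturated in $H_{\mb{Z}}$; your heuristic that purity ``keeps the image of the integral lattice closed and relatively discrete'' does not address this torsion obstruction.

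The paper fills this gap by an explicit integral computation. For $\mb{H}^1_{\widetilde{\mc{X}}_{\Delta^*}}$ one uses the Picard--Lefschetz formula $T(\eta)=\eta+(\delta,\eta)\delta$: since $H_1(X_0,\mb{Z})$ is torsion-free, the vanishing cycle $\delta$ is primitive, hence there is $\gamma$ with $(\gamma,\delta)=1$, and therefore $(T-\mr{Id})H^1(\mc{X}_s,\mb{Z})=\mb{Z}\delta^c=(T-\mr{Id})H^1(\mc{X}_s,\mb{Q})\cap H^1(\mc{X}_s,\mb{Z})$, giving $G_{\mc{X}_s}=0$. For $\mb{H}^3_{\mc{G}(2,\mc{L})_{\Delta^*}}$ there is no direct Picard--Lefschetz description; instead the paper uses the \emph{integral} isomorphism of local systems $\Phi_{\Delta^*}:\mb{H}^1_{\widetilde{\mc{X}}_{\Delta^*}}\xrightarrow{\sim}\mb{H}^3_{\mc{G}(2,\mc{L})_{\Delta^*}}$ (constructed in \S 4 from $c_2(\mc{U})^{1,3}$) which intertwines the monodromies, so $G_{\mc{G}(2,\mc{L})_s}\cong G_{\mc{X}_s}=0$. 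You should add this step; without it the argument does not yield a \emph{smooth} Lie group over $\Delta$ with section sheaf equal to $\ov{\mc{J}}$.
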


  \begin{proof}
  Fix an $s \in \Delta^*$. Denote by $T_{\mc{X}_s}: H^1(\mc{X}_s,\mb{Z}) \xrightarrow{\sim} H^1(\mc{X}_s,\mb{Z})$ and 
  \[T_{\mc{G}(2,\mc{L})_s}:H^3(\mc{G}(2,\mc{L})_s,\mb{Z}) \xrightarrow{\sim} H^3(\mc{G}(2,\mc{L})_s,\mb{Z})\]
  the restriction of the monodromy operators $T_{\widetilde{\mc{X}}_{\Delta^*}}$ and $T_{\mc{G}(2,\mc{L})_{\Delta^*}}$ as in \eqref{ner07}, respectively,
  to the fiber over $s$.
  Denote by $T'_{\mc{X}_s,\mb{Q}}:=(T_{\mc{X}_s}-\mr{Id})H^1(\mc{X}_s,\mb{Q})$ and \[T'_{\mc{G}(2,\mc{L})_s,\mb{Q}}:=(T_{\mc{G}(2,\mc{L})_s}-\mr{Id})H^3(\mc{G}(2,\mc{L})_s,\mb{Q}).\]
  Consider the following finite groups (see \cite[Theorem II.B.$3$]{green} for finiteness of the group):
  \[G_{\mc{X}_s}:=\frac{T'_{\mc{X}_s,\mb{Q}} \cap H^1(\mc{X}_s,\mb{Z})}{(T_{\mc{X}_s}-\mr{Id})H^1(\mc{X}_s,\mb{Z})} 
   \mbox{ and } G_{\mc{G}(2,\mc{L})_s}:=\frac{T'_{\mc{G}(2,\mc{L})_s,\mb{Q}} \cap H^3(\mc{G}(2,\mc{L})_s,\mb{Z})}{ (T_{\mc{G}(2,\mc{L})_s}-\mr{Id})H^3(\mc{G}(2,\mc{L})_s,\mb{Z})}.
  \]
 Recall by Theorem \ref{tor17}, we have
 \begin{enumerate}
  \item $(T_{\widetilde{\mc{X}}}-\mr{Id})^2=0=(T_{\mc{G}(2,\mc{L})}-\mr{Id})^2$,
  \item $ \mr{Gr}_2^W  H^1(\widetilde{\mc{X}}_\infty,\mb{C})$ (resp. $\mr{Gr}_4^WH^3(\mc{G}(2,\mc{L})_{\infty}, \mb{C})$) is pure of type $(1,1)$ (resp. 
  of type $(2,2)$).
 \end{enumerate} 
 Using \cite[Proposition II.$A.8$ and Theorem II.$B.9$]{green} (see also \cite[Propositions $2.2$ and $2.7$]{sai}) we conclude that if 
 $G_{\mc{X}_s}=0=G_{\mc{G}(2,\mc{L})_s}$, then  the family of intermediate Jacobians  
 $\mbf{J}^1_{\widetilde{\mc{X}}}$ and $\mbf{J}^2_{\mc{G}(2,\mc{L})}$ 
 extends holomorphically and canonically to $\ov{\mbf{J}}^1_{\widetilde{\mc{X}}}$ and $\ov{\mbf{J}}^2_{\mc{G}(2,\mc{L})}$ over $\Delta$, 
 which have the structure of smooth, complex Lie groups over $\Delta$.
 Furthermore, we have a natural isomorphism of sheaves of abelian groups \[\mo_{\ov{\mbf{J}}^1_{\widetilde{\mc{X}}}} \cong \ov{\mc{J}}^1_{\widetilde{\mc{X}}} \mbox{ and } \mo_{\ov{\mbf{J}}^2_{\mc{G}(2,\mc{L})}}
                        \cong \ov{\mc{J}}^2_{\mc{G}(2,\mc{L})}.\]
 It therefore suffices to check that $G_{\mc{X}_s}=0=G_{\mc{G}(2,\mc{L})_s}$.
 
 Denote by $\delta \in H_1(\mc{X}_s,\mb{Z})$ the vanishing cycle associated to the degeneration of curves defined by $\pi_1$ (see \cite[\S $3.2.1$]{v5}).
  Note that $\delta$ is the generator of the kernel of the natural morphism 
 \[H_1(\mc{X}_s,\mb{Z}) \xrightarrow{i_{s_*}} H_1(\mc{X},\mb{Z}) \xrightarrow{r_0} H_1({X}_0,\mb{Z}),\]
 where $i_s:\mc{X}_s \to \mc{X}$ is the natural inclusion of fiber and $r_0:\mc{X} \xrightarrow{\cong} {X}_0$ is the retraction to the 
 central fiber.
 Since  ${X}_0$ is an irreducible nodal curve, the homology group $H_1({X}_0,\mb{Z})$ is torsion-free.
 Therefore, $\delta$ is non-divisible i.e., there does not exist $\delta' \in H_1(\mc{X}_s,\mb{Z})$ such that 
 $n\delta'=\delta$ for some integer $n \not= 1$. 
 Denote by $(-,-)$ the intersection form on $H_1(\mc{X}_s,\mb{Z})$, defined using cup-product (see \cite[\S $7.1.2$]{v4}).
 Since the intersection form $(-,-)$ induces a perfect pairing on $H_1(\mc{X}_s,\mb{Z})$, 
 the non-divisibility of $\delta$ implies that there exists $\gamma \in H_1(\mc{X}_s,\mb{Z})$ such that $(\gamma, \delta)=1$. 
  Recall the Picard-Lefschetz formula, \[T_{\mc{X}_s}(\eta)=\eta+(\delta, \eta) \delta \, \mbox{ for any } \eta \in  H_1(\mc{X}_s,\mb{Z}).\]
 This implies, 
 $T'_{\mc{X}_s,\mb{Q}} \cap H^1(\mc{X}_s,\mb{Z})=\mb{Z} \delta^c=(T_{\mc{X}_s}-\mr{Id})H^1(\mc{X}_s,\mb{Z})$, where $\delta^c$ is the Poincar\'{e} 
 dual to the vanishing cycle $\delta$. Therefore, $G_{\mc{X}_s}=0$.
 
 Now, there exists an isomorphism of local system $\Phi_{\Delta^*}: \mb{H}^1_{\mc{X}_{\Delta^*}} \to \mb{H}^{3}_{\mc{G}(2,\mc{L})_{\Delta^*}}$ which 
 commutes with the respective monodromy operators (see the diagram \eqref{ner03} below). Denote by $\Phi_s:H^1(\mc{X}_s,\mb{Z}) \xrightarrow{\sim} H^3(\mc{G}(2,\mc{L})_s,\mb{Z})$
 the restriction of $\Phi_{\Delta^*}$ to the fiber over $s$. Note that, 
 \[ T'_{\mc{G}(2,\mc{L})_s,\mb{Q}} \cap H^3(\mc{G}(2,\mc{L})_s,\mb{Z})=\mb{Z}\Phi_s(\delta^c)=(T_{\mc{G}(2,\mc{L})_s}-\mr{Id})H^3(\mc{G}(2,\mc{L})_s,\mb{Z}).\]
 This implies $G_{\mc{G}(2,\mc{L})_s}=0$, thereby proving the theorem.
  \end{proof}

  \begin{rem}\label{nmequiv}
   The extensions $\ov{\mbf{J}}^1_{\widetilde{\mc{X}}}$ and $\ov{\mbf{J}}^2_{\mc{G}(2,\mc{L})}$ are called \emph{N\'{e}ron models} for
   $\mc{J}^1_{\widetilde{\mc{X}}_{\Delta^*}}$ and $\mc{J}^2_{\mc{G}(2,\mc{L})_{\Delta^*}}$  (see \cite{green} for this terminology).
  The construction of the N\'{e}ron model by Zucker in \cite{zuck} differs from that by Clemens in \cite{cle2} by the group $G_{\mc{X}_s}$ 
  and $G_{\mc{G}(2,\mc{L})_s}$, mentioned in 
  the proof of Theorem \ref{ner02} above (see \cite[Proposition $2.7$]{sai}). As we observe in the proof above that $G_{\mc{X}_s}$ and 
  $G_{\mc{G}(2,\mc{L})_s}$ vanish in our setup, thus 
  the two N\'{e}ron models coincide in our case. 
   \end{rem}

  We now describe the central fiber of the N\'{e}ron model of the intermediate Jacobians.

 \begin{note}\label{tor28}
 Recall, $H^1(X_0,\mb{C})$ and $H^3(\mc{G}_{X_0}(2,\mc{L}_0)$ are equipped with mixed Hodge structures.
 Define the \emph{generalized intermediate Jacobian} of $X_0$ and $\mc{G}_{X_0}(2,\mc{L}_0)$ as 
   \[J^1(X_0):=\frac{H^1(X_0,\mb{C})}{F^1H^1(X_0,\mb{C})+H^1(X_0,\mb{Z})} \mbox{ and }\]
   \[ J^2(\mc{G}_{X_0}(2,\mc{L}_0)):=\frac{H^3(\mc{G}_{X_0}(2,\mc{L}_0),\mb{C})}{F^2 H^3(\mc{G}_{X_0}(2,\mc{L}_0),\mb{C})+H^3(\mc{G}_{X_0}(2,\mc{L}_0),\mb{Z})}.\]
 Denote by $W_i J^1(X_0)$ (resp. $W_i J^2(\mc{G}_{X_0}(2,\mc{L}_0))$) the image of the natural morphism 
 \small \[W_iH^1(X_0,\mb{C}) \to J^1(X_0) \, (\mbox{resp. } \, W_iH^3(\mc{G}_{X_0}(2,\mc{L}_0),\mb{C}) \to J^2(\mc{G}_{X_0}(2,\mc{L}_0))).\]
 By Theorem \ref{ner02} we have,  \[ \left(\ov{\mbf{J}}^1_{\widetilde{\mc{X}}}\right)_0 \cong \left(\ov{\mc{J}}^1_{\widetilde{\mc{X}}}\right) \otimes k(0)= \frac{H^1(\widetilde{\mc{X}}_\infty,\mb{C})}{F^1 H^1(\widetilde{\mc{X}}_\infty,\mb{C})+ \left(\ov{\mb{H}}_{\widetilde{\mc{X}}}^1\right)_0}
  \, \mbox{ and } \]\[\, \left(\ov{\mbf{J}}^2_{\mc{G}(2,\mc{L})}\right)_0 \cong \left(\ov{\mc{J}}^2_{\mc{G}(2,\mc{L})}\right) \otimes k(0)=\frac{H^3(\mc{G}(2,\mc{L})_\infty,\mb{C})}{F^2H^3(\mc{G}(2,\mc{L})_\infty,\mb{C})+ \left(\ov{\mb{H}}^3_{\mc{G}(2,\mc{L})}\right)_0}.\]
 Denote by $W_i\left(\ov{\mbf{J}}^1_{\widetilde{\mc{X}}}\right)_0$ (resp. $W_i \left(\ov{\mbf{J}}^2_{\mc{G}(2,\mc{L})}\right)_0$)
 the image of the natural morphism 
 \[W_iH^1(\widetilde{\mc{X}}_\infty,\mb{C}) \to \left(\ov{\mbf{J}}^1_{\widetilde{\mc{X}}}\right)_0 (\mbox{resp. } W_iH^3(\mc{G}(2,\mc{L})_\infty,\mb{C}) \to \left(\ov{\mbf{J}}^2_{\mc{G}(2,\mc{L})}\right)_0).\]
  \end{note}
  
  \begin{cor}\label{ner04}
   The central fibers of $\ov{\mbf{J}}^1_{\widetilde{\mc{X}}}$ and $\ov{\mbf{J}}^2_{\mc{G}(2,\mc{L})}$ satisfy the following:
   \begin{enumerate}
    \item denote by $i_0:\widetilde{\mc{X}}_0 \to X_0$ the natural morphism contracting the rational curve $F$ to the nodal point.
  The natural morphism of mixed Hodge structures \[(\mr{sp}_1 \circ i_0^*):H^1(X_0,\mb{C}) \to H^1(\widetilde{\mc{X}}_\infty,\mb{C}),\] induces an isomorphism 
  from  $J^1(X_0)$ to the central fiber $\left(\ov{\mbf{J}}^1_{\widetilde{\mc{X}}}\right)_0$ such that 
  $W_i J^1(X_0) \cong W_i \left(\ov{\mbf{J}}^1_{\widetilde{\mc{X}}}\right)_0$ for all $i$,
  \item the  morphism $\mr{sp}_2$  induces an isomorphism  $J^2(\mc{G}_{X_0}(2,\mc{L}_0))$ to 
  the central fiber $\left(\ov{\mbf{J}}^2_{\mc{G}(2,\mc{L})}\right)_0$ such that 
  $W_iJ^2(\mc{G}_{X_0}(2,\mc{L}_0)) \cong  W_i\left(\ov{\mbf{J}}^2_{\mc{G}(2,\mc{L})}\right)_0$ for all $i$.
   \end{enumerate}
  \end{cor}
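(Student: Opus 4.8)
\emph{Strategy.} I would first record, using Theorem \ref{ner02} and Notation \ref{tor28}, that the central fibers are
\[\left(\ov{\mbf{J}}^1_{\widetilde{\mc{X}}}\right)_0 \cong \frac{H^1(\widetilde{\mc{X}}_\infty,\mb{C})}{F^1H^1(\widetilde{\mc{X}}_\infty,\mb{C})+\left(\ov{\mb{H}}_{\widetilde{\mc{X}}}^1\right)_0},\qquad \left(\ov{\mbf{J}}^2_{\mc{G}(2,\mc{L})}\right)_0 \cong \frac{H^3(\mc{G}(2,\mc{L})_\infty,\mb{C})}{F^2H^3(\mc{G}(2,\mc{L})_\infty,\mb{C})+\left(\ov{\mb{H}}^3_{\mc{G}(2,\mc{L})}\right)_0},\]
where $\left(\ov{\mb{H}}_{\widetilde{\mc{X}}}^1\right)_0 = H^1(\widetilde{\mc{X}}_\infty,\mb{Z})^{T_{\widetilde{\mc{X}}}}$ and $\left(\ov{\mb{H}}^3_{\mc{G}(2,\mc{L})}\right)_0 = H^3(\mc{G}(2,\mc{L})_\infty,\mb{Z})^{T_{\mc{G}(2,\mc{L})}}$ are the monodromy invariant sublattices (stalks at $0$ of $j_*$ of the integral local systems). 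The plan is then to show that $\mr{sp}_1\circ i_0^*$ and $\mr{sp}_2$, which are morphisms of mixed Hodge structures by \cite[Theorem $11.29$]{pet}, descend to the isomorphisms claimed in (1) and (2).

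\emph{The morphism $i_0^*$.} Since $i_0:\widetilde{\mc{X}}_0\to X_0$ contracts $F\cong\mb{P}^1$ to the node and is an isomorphism away from it, one has $R^0i_{0*}\mb{Z}=\mb{Z}_{X_0}$ and $R^1i_{0*}\mb{Z}=0$ (the only positive dimensional fibre is $\mb{P}^1$, whose $H^1$ vanishes), so the Leray spectral sequence yields an isomorphism $i_0^*:H^1(X_0,\mb{Z})\xrightarrow{\sim}H^1(\widetilde{\mc{X}}_0,\mb{Z})$; it is a morphism of mixed Hodge structures because $i_0$ is a morphism of varieties. (Equivalently, one compares the Mayer--Vietoris presentations of $H^1$ of the two nodal curves, noting that $i_0$ is an isomorphism on the cohomology of the dual graphs.)

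\emph{Proof of (1).} By Theorem \ref{tor17}(1) the map $\mr{sp}_1$ induces an isomorphism $H^1(\widetilde{\mc{X}}_0,\mb{C})/F^1\xrightarrow{\sim}H^1(\widetilde{\mc{X}}_\infty,\mb{C})/F^1$, and composing with $i_0^*$ gives an isomorphism $H^1(X_0,\mb{C})/F^1\xrightarrow{\sim}H^1(\widetilde{\mc{X}}_\infty,\mb{C})/F^1$. On integral lattices, $\mr{sp}_1\circ i_0^*$ carries $H^1(X_0,\mb{Z})$ into $\left(\ov{\mb{H}}_{\widetilde{\mc{X}}}^1\right)_0$, and by Proposition \ref{t04} together with the local invariant cycle sequence \eqref{t02} this becomes an isomorphism after $\otimes\,\mb{Q}$ (both sides being identified with $W_1H^1(\widetilde{\mc{X}}_\infty,\mb{Q})=\ker(T_{\widetilde{\mc{X}}}-\mr{Id})_{\mb{Q}}$). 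As $H^1(X_0,\mb{Z})$ is torsion free the map is injective, and the non-divisibility of the vanishing cycle $\delta$ --- exactly the fact used in the proof of Theorem \ref{ner02} to obtain $G_{\mc{X}_s}=0$ --- makes it surjective onto $\left(\ov{\mb{H}}_{\widetilde{\mc{X}}}^1\right)_0$; hence $\mr{sp}_1\circ i_0^*:H^1(X_0,\mb{Z})\xrightarrow{\sim}\left(\ov{\mb{H}}_{\widetilde{\mc{X}}}^1\right)_0$. A linear map that induces an isomorphism modulo $F^1$ and carries $H^1(X_0,\mb{Z})$ isomorphically onto $\left(\ov{\mb{H}}_{\widetilde{\mc{X}}}^1\right)_0$ induces an isomorphism of the corresponding quotients, so $J^1(X_0)\xrightarrow{\sim}\left(\ov{\mbf{J}}^1_{\widetilde{\mc{X}}}\right)_0$. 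Finally, being a morphism of mixed Hodge structures it is strict for $W_\bullet$, hence carries $W_iH^1(X_0,\mb{C})$ onto $W_iH^1(\widetilde{\mc{X}}_\infty,\mb{C})$ and therefore $W_iJ^1(X_0)$ onto $W_i\left(\ov{\mbf{J}}^1_{\widetilde{\mc{X}}}\right)_0$ for every $i$.

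\emph{Proof of (2), and the main difficulty.} For (2) I would run the same argument with $(H^1,F^1,\mr{sp}_1,i_0^*)$ replaced by $(H^3,F^2,\mr{sp}_2,\mr{id})$; no contraction is needed since $\mr{sp}_2$ already issues from $H^3(\mc{G}_{X_0}(2,\mc{L}_0))$. One uses Theorem \ref{tor17}(2) for the isomorphism modulo $F^2$, Proposition \ref{t04} and \eqref{t03} for the $\mb{Q}$-identification of the image with $W_3H^3(\mc{G}(2,\mc{L})_\infty,\mb{Q})=\ker(T_{\mc{G}(2,\mc{L})}-\mr{Id})_{\mb{Q}}$, torsion freeness of $H^3(\mc{G}_{X_0}(2,\mc{L}_0),\mb{Z})$ for injectivity, and --- for surjectivity onto the invariant lattice --- the fact established in the proof of Theorem \ref{ner02} (via the local system isomorphism $\Phi_s$ and non-divisibility of $\Phi_s(\delta^c)$) that $G_{\mc{G}(2,\mc{L})_s}=0$. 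I expect the only genuine obstacle to be this integral lattice identification in (1) and (2): over $\mb{Q}$ everything is immediate from the local invariant cycle theorem and Proposition \ref{t04}, and what remains is to rule out a finite cokernel of $\mr{sp}_1$ (resp. $\mr{sp}_2$) onto the monodromy invariants, which is precisely the vanishing of $G_{\mc{X}_s}$ (resp. $G_{\mc{G}(2,\mc{L})_s}$) obtained in Theorem \ref{ner02}; the remaining ingredients --- the isomorphism $i_0^*$, strictness, and torsion-freeness --- are routine.
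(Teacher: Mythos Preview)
Your proposal follows essentially the same route as the paper: identify the central fibers via Theorem~\ref{ner02}, use Theorem~\ref{tor17} for the isomorphism modulo the Hodge filtration, match the integral lattices with the images of $\mr{sp}_i$, and read off weight compatibility from the fact that $\mr{sp}_i$ and $i_0^*$ are morphisms of mixed Hodge structures.

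Two small points where you diverge from the paper are worth flagging. First, for $i_0^*$ you use the Leray spectral sequence, while the paper applies the proper-modification exact sequence \cite[Corollary--Definition~5.37]{pet}; both give the same isomorphism of mixed Hodge structures. Second, for the integral lattice the paper simply asserts $\mr{sp}_1(H^1(\widetilde{\mc{X}}_0,\mb{Z}))=\ker(T_{\widetilde{\mc{X}}}-\mr{Id})\cap H^1(\widetilde{\mc{X}}_\infty,\mb{Z})$ (and similarly for $\mr{sp}_2$) directly from the local invariant cycle theorem and the explicit description of $g_0$ in \cite[XI--6]{pet}, and then takes the quotient; in particular it never needs injectivity of $\mr{sp}_2$ on $\mb{Z}$ and hence never needs the torsion-freeness of $H^3(\mc{G}_{X_0}(2,\mc{L}_0),\mb{Z})$ that you invoke without proof. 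Your separate injectivity/surjectivity argument works, but the paper's ``only the image matters'' approach is cleaner and sidesteps that unverified hypothesis. Finally, your sentence ``carries $W_iH^1(X_0,\mb{C})$ onto $W_iH^1(\widetilde{\mc{X}}_\infty,\mb{C})$'' is literally false for $i\ge 2$ (the image of $\mr{sp}_1\circ i_0^*$ sits in $W_1$); what you need, and what the paper also states only at the Jacobian level, is that the induced map on each $W_iJ^1$ is an isomorphism, which follows from the graded isomorphisms of Proposition~\ref{t04}.
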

  
  \begin{proof}
    Consider the restriction of the morphisms $g_{\widetilde{\mc{X}},t}$ and  $g_{\mc{G}(2,\mc{L}),t}$ as in \eqref{tor23} to $H^1(\widetilde{\mc{X}}_\infty,\mb{Z})$
 and $H^1(\mc{G}(2,\mc{L})_\infty, \mb{Z})$, respectively:
 \begin{align*}
 & g_0: H^1(\widetilde{\mc{X}}_\infty,\mb{Z}) \hookrightarrow H^1(\widetilde{\mc{X}}_\infty,\mb{C}) \xrightarrow[\sim]{g_{\widetilde{\mc{X}},t}} \left(\ov{\mc{H}}^1_{\widetilde{\mc{X}}_\Delta^*}\right)_0
 \mbox{ and }\\
 & g_1:H^3(\mc{G}(2,\mc{L})_\infty, \mb{Z}) \hookrightarrow H^3(\mc{G}(2,\mc{L})_\infty, \mb{C}) \xrightarrow[\sim]{g_{\mc{G}(2,\mc{L}),t}} \left(\ov{\mc{H}}^3_{\mc{G}(2,\mc{L})}\right)_0
 \end{align*}
 Using the explicit description of $g_{\widetilde{\mc{X}},t}$ and  $g_{\mc{G}(2,\mc{L}),t}$ as in \cite[XI-$6$]{pet}, observe that 
  \[\ker (T_{\widetilde{\mc{X}}}-\mr{Id}) \cap H^1( \widetilde{\mc{X}}_\infty,\mb{Z}) \stackrel{g_0}{\cong} \Ima g_0 \cap \left(\ov{\mb{H}}_{\widetilde{\mc{X}}}^1\right)_0 \mbox{ and }\]
 \[\ker (T_{\mc{G}(2,\mc{L})}-\mr{Id}) \cap H^3(\mc{G}(2,\mc{L})_\infty, \mb{Z}) \stackrel{g_1}{\cong} \Ima g_1 \cap \left(\ov{\mb{H}}^3_{\mc{G}(2,\mc{L})}\right)_0.\]
 By the local invariant cycle theorem \eqref{t02} and \eqref{t03}, we have 
 \[\ker (T_{\widetilde{\mc{X}}}-\mr{Id}) \cap H^1( \widetilde{\mc{X}}_\infty,\mb{Z}) = \Ima \mr{sp}_1 \cap  H^1( \widetilde{\mc{X}}_\infty,\mb{Z}) = \mr{sp}_1(H^1(\widetilde{\mc{X}}_0,\mb{Z})) \mbox{ and }
  \]
  \[\ker (T_{\mc{G}(2,\mc{L})}-\mr{Id}) \cap H^3(\mc{G}(2,\mc{L})_\infty, \mb{Z})= \Ima \mr{sp}_2 \cap H^3(\mc{G}(2,\mc{L})_\infty, \mb{Z})\] which equals
  $\mr{sp}_2(H^3(\mc{G}_{X_0}(2,\mc{L}_0),\mb{Z}))$.
Using Theorem \ref{tor17}, we can then conclude
\[\frac{H^1(\widetilde{\mc{X}}_0,\mb{C})}{F^1H^1(\widetilde{\mc{X}}_0,\mb{C})+H^1(\widetilde{\mc{X}}_0,\mb{Z})} \xrightarrow[\sim]{\mr{sp}_1} \frac{H^1(\widetilde{\mc{X}}_\infty,\mb{C})}{F^1H^1(\widetilde{\mc{X}}_\infty,\mb{C})+\mr{sp}_1(H^1  (\widetilde{\mc{X}}_0,\mb{Z}))} \cong   \left(\ov{\mbf{J}}^1_{\widetilde{\mc{X}}}\right)_0\]
and $\mr{sp}_2$ maps 
\[\frac{H^3(\mc{G}_{X_0}(2,\mc{L}_0),\mb{C})}{F^2 H^3(\mc{G}_{X_0}(2,\mc{L}_0),\mb{C})+H^3(\mc{G}_{X_0}(2,\mc{L}_0),\mb{Z})} \mbox{ isomorphically to }\]
\[\frac{H^3(\mc{G}(2,\mc{L})_{\infty},\mb{C})}{F^2 H^3(\mc{G}(2,\mc{L})_{\infty},\mb{C})+\mr{sp}_2(H^3(\mc{G}_{X_0}(2,\mc{L}_0),\mb{Z}))} \cong \left(\ov{\mbf{J}}^2_{\mc{G}(2,\mc{L})}\right)_0.\]
Let $f:x_0 \to X_0$ be the natural inclusion. Applying \cite[Corollary-Definition $5.37$]{pet} to the proper modification $i_0:\widetilde{\mc{X}}_0 \to X_0$, we obtain the following exact sequence 
of mixed Hodge structures:
 \[H^0(X_0,\mb{Z}) \hookrightarrow H^0(\widetilde{\mc{X}}_0,\mb{Z}) \oplus H^0(x_0,\mb{Z}) 
 \to H^0(F,\mb{Z}) \to\]\[\to H^1(X_0,\mb{Z}) \xrightarrow{(i_0^*,f^*)} H^1(\widetilde{\mc{X}}_0,\mb{Z}) \oplus H^1(x_0,\mb{Z}) \to H^1(F,\mb{Z}).\]
 Since $F \cong \mb{P}^1$ and $x_0$ is a point, we have $H^1(F,\mb{Z})=0=H^1(x_0,\mb{Z})$. 
 Moreover, $H^0(X_0,\mb{Z})=$ $H^0(\widetilde{\mc{X}}_0,\mb{Z})=$ $H^0(x_0,\mb{Z})=$ $H^0(F,\mb{Z})=\mb{Z}$.
 This implies $i_0^*:H^1(X_0,\mb{Z}) \to H^1(\widetilde{\mc{X}}_0,\mb{Z})$ is an isomorphism of mixed Hodge structures.
Therefore, 
\[J^1(X_0)=\frac{H^1(X_0,\mb{C})}{F^1H^1(X_0,\mb{C})+H^1(X_0,\mb{Z})} \stackrel{i_0^*}{\cong} \frac{H^1(\widetilde{\mc{X}}_0,\mb{C})}{F^1H^1(\widetilde{\mc{X}}_0,\mb{C})+H^1(\widetilde{\mc{X}}_0,\mb{Z})}= \left(\ov{\mbf{J}}^1_{\widetilde{\mc{X}}}\right)_0\]
and $J^2(\mc{G}_{X_0}(2,\mc{L}_0)) \cong \left(\ov{\mbf{J}}^2_{\mc{G}(2,\mc{L})}\right)_0.$ 
 Since the specialization morphisms and $i_0^*$ are morphisms of mixed Hodge structures, we have 
 $W_i J^1(X_0) \cong W_i \left(\ov{\mbf{J}}^1_{\widetilde{\mc{X}}}\right)_0$ and  
 $W_iJ^2(\mc{G}_{X_0}(2,\mc{L}_0)) \cong  W_i\left(\ov{\mbf{J}}^2_{\mc{G}(2,\mc{L})}\right)_0$ for all $i$.
 This proves the corollary.
  \end{proof}

  \section{Relative Mumford-Newstead and the Torelli theorem}
  We use notations of \S \ref{sec3} and \S \ref{sec4}. In this section, we prove the relative version of \cite[Theorem p. $1201$]{mumn} (Proposition \ref{ner01}).
  We use this to show that the generalized intermediate Jacobians 
  $J^1(X_0)$ and $J^2(\mc{G}_{X_0}(2,\mc{L}_0))$ defined in Notation \ref{tor28} are semi-abelian varieties and are isomorphic (Corollary \ref{tor27}).
  As an application, we prove the Torelli theorem for irreducible nodal curves (Theorem \ref{tor30}).
  
  We first consider the relative version of the construction in \cite{mumn}.
  Denote by  $\mc{W}:=\mc{X}_{\Delta^*} \times_{\Delta^*} \mc{G}(2,\mc{L})_{\Delta^*}$ and $\pi_3: \mc{W} \to \Delta^*$ the natural morphism. Recall, for 
  all $t \in \Delta^*$, the fiber $\mc{W}_t:=\pi_3^{-1}(t)$, is $\mc{X}_t \times \mc{G}(2,\mc{L})_t \cong \mc{X}_t \times M_{\mc{X}_t}(2,\mc{L}_t)$ (Theorem \ref{tor16}).
  There exists a (relative) universal bundle $\mc{U}$ over $\mc{W}$
  associated to the (relative) moduli space $\mc{G}(2,\mc{L})_{\Delta^*}$ i.e., $\mc{U}$ is a vector bundle over $\mc{W}$ such that for each $t \in \Delta^*$,
  $\mc{U}|_{\mc{W}_t}$ is the universal bundle over $\mc{X}_t \times M_{\mc{X}_t}(2,\mc{L}_t)$ associated to fine moduli space $M_{\mc{X}_t}(2,\mc{L}_t)$
  (use \cite[Theorem $9.1.1$]{pan}).
  Denote by $\mb{H}^4_{\mc{W}}:=R^4 \pi_{3_*} \mb{Z}_{\mc{W}}$ the local system associated to $\mc{W}$.
  Using K\"{u}nneth decomposition, we have 
  \[\mb{H}^4_{\mc{W}}:= \bigoplus\limits_i \left(\mb{H}^i_{\widetilde{\mc{X}}_{\Delta^*}} \otimes \mb{H}^{4-i}_{\mc{G}(2,\mc{L})_{\Delta^*}}\right).\]
  Denote by $c_2(\mc{U})^{1,3} \in \Gamma\left(\mb{H}^1_{\widetilde{\mc{X}}_{\Delta^*}} \otimes \mb{H}^{3}_{\mc{G}(2,\mc{L})_{\Delta^*}}\right)$ 
  the image of the second Chern class $c_2(\mc{U}) \in \Gamma(\mb{H}^4_{\mc{W}})$ under the natural projection  
  $\mb{H}^4_{\mc{W}} \to \mb{H}^1_{\widetilde{\mc{X}}_{\Delta^*}} \otimes \mb{H}^{3}_{\mc{G}(2,\mc{L})_{\Delta^*}}$.
  Using Poincar\'{e} duality applied to the local system $\mb{H}^1_{\widetilde{\mc{X}}_{\Delta^*}}$, we have
\begin{equation}\label{ner08}
 \mb{H}^1_{\widetilde{\mc{X}}_{\Delta^*}} \otimes \mb{H}^{3}_{\mc{G}(2,\mc{L})_{\Delta^*}} \stackrel{\mr{PD}}{\cong}\left(\mb{H}^1_{\widetilde{\mc{X}}_{\Delta^*}}\right)^\vee \otimes \mb{H}^{3}_{\mc{G}(2,\mc{L})_{\Delta^*}} \cong \Hc\left(\mb{H}^1_{\widetilde{\mc{X}}_{\Delta^*}}, \mb{H}^{3}_{\mc{G}(2,\mc{L})_{\Delta^*}}\right).
\end{equation}
  Therefore, $c_2(\mc{U})^{1,3}$ induces a homomorphism
  \[\Phi_{\Delta^*}: \mb{H}^1_{\mc{X}_{\Delta^*}} \to \mb{H}^{3}_{\mc{G}(2,\mc{L})_{\Delta^*}}.\]
  By \cite[Lemma $1$ and Proposition $1$]{mumn}, we conclude that the homomorphism $\Phi_{\Delta^*}$
  is an isomorphism such that the induced 
  isomorphism on the associated vector bundles:
  \[\Phi_{\Delta^*}:\mc{H}^1_{\mc{X}_{\Delta^*}} \xrightarrow{\sim} \mc{H}^{3}_{\mc{G}(2,\mc{L})_{\Delta^*}} \mbox{ satisfies } \Phi_{\Delta^*}(F^p\mc{H}^1_{\mc{X}_{\Delta^*}})= F^{p+1}\mc{H}^{3}_{\mc{G}(2,\mc{L})_{\Delta^*}} \, \forall \, p \ge 0.\]
 Therefore, the morphism $\Phi_{\Delta^*}$ induces an isomorphism $\Phi: \mc{J}^1_{\widetilde{\mc{X}}_{\Delta^*}} \xrightarrow{\sim} \mc{J}^2_{\mc{G}(2,\mc{L})_{\Delta^*}}$.

 Since $\ov{\mc{H}}^1_{\mc{X}_{\Delta^*}}$ and $\ov{\mc{H}}^{3}_{\mc{G}(2,\mc{L})_{\Delta^*}}$ are canonical extensions of $\mc{H}^1_{\mc{X}_{\Delta^*}}$
  and $\mc{H}^{3}_{\mc{G}(2,\mc{L})_{\Delta^*}}$, respectively, the morphism $\Phi_{\Delta^*}$ extend to the entire disc:
  \[\widetilde{\Phi}:\ov{\mc{H}}_{\widetilde{\mc{X}}}^1 \xrightarrow{\sim} \ov{\mc{H}}^3_{\mc{G}(2,\mc{L})}.\]
  Using the identification \eqref{tor23} and restricting $\widetilde{\Phi}$ to the central fiber, we have an isomorphism:
  \[\widetilde{\Phi}_0 : H^1(\widetilde{\mc{X}}_\infty,\mb{C}) \xrightarrow{\sim} H^3(\mc{G}(2,\mc{L})_\infty,\mb{C}).\]
  We can then conclude:
  \begin{prop}\label{ner01}
   For the extended morphism $\widetilde{\Phi}$, we have 
   \[\widetilde{\Phi}(F^p\ov{\mc{H}}_{\widetilde{\mc{X}}}^1)= F^{p+1}\ov{\mc{H}}^3_{\mc{G}(2,\mc{L})} \mbox{ for  } p=0,1\]  and  
   $\widetilde{\Phi}(\ov{\mb{H}}_{\widetilde{\mc{X}}}^1)=\ov{\mb{H}}^3_{\mc{G}(2,\mc{L})}$.
   Moreover, $\widetilde{\Phi}_0(W_iH^1(\widetilde{\mc{X}}_\infty,\mb{C}))=W_{i+2}H^3(\mc{G}(2,\mc{L})_\infty,\mb{C})$
   for all $i \ge 0$.
  \end{prop}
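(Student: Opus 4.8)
The plan is to deduce all four assertions from the corresponding statements on the punctured disc together with the fact that $\widetilde\Phi$ was constructed as the extension of $\Phi_{\Delta^*}$ to the canonical extensions. First I would address the Hodge-filtration compatibility $\widetilde\Phi(F^p\ov{\mc{H}}_{\widetilde{\mc{X}}}^1)= F^{p+1}\ov{\mc{H}}^3_{\mc{G}(2,\mc{L})}$ for $p=0,1$. On $\Delta^*$ this is exactly the Mumford--Newstead relation $\Phi_{\Delta^*}(F^p\mc{H}^1_{\mc{X}_{\Delta^*}})= F^{p+1}\mc{H}^{3}_{\mc{G}(2,\mc{L})_{\Delta^*}}$ recalled just above. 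Now recall that $F^p\ov{\mc{H}}_{\widetilde{\mc{X}}}^1 = j_*(F^p\mc{H}^1_{\mc{X}_{\Delta^*}}) \cap \ov{\mc{H}}_{\widetilde{\mc{X}}}^1$ is the \emph{unique largest} locally free subsheaf of $\ov{\mc{H}}_{\widetilde{\mc{X}}}^1$ extending $F^p\mc{H}^1_{\mc{X}_{\Delta^*}}$ (and similarly on the $\mc{G}$-side). Since $\widetilde\Phi$ is an isomorphism of vector bundles over $\Delta$ restricting to $\Phi_{\Delta^*}$ over $\Delta^*$, it carries $j_*(F^p\mc{H}^1_{\mc{X}_{\Delta^*}})$ onto $j_*(F^{p+1}\mc{H}^3_{\mc{G}(2,\mc{L})_{\Delta^*}})$ and $\ov{\mc{H}}_{\widetilde{\mc{X}}}^1$ onto $\ov{\mc{H}}^3_{\mc{G}(2,\mc{L})}$, hence carries the intersection $F^p\ov{\mc{H}}_{\widetilde{\mc{X}}}^1$ isomorphically onto $F^{p+1}\ov{\mc{H}}^3_{\mc{G}(2,\mc{L})}$; equivalently, by the uniqueness characterization, $\widetilde\Phi(F^p\ov{\mc{H}}_{\widetilde{\mc{X}}}^1)$ is a locally free extension of $F^{p+1}\mc{H}^3_{\mc{G}(2,\mc{L})_{\Delta^*}}$ sitting inside $\ov{\mc{H}}^3_{\mc{G}(2,\mc{L})}$, and it is the largest such since $\widetilde\Phi$ is an isomorphism. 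For $p=0$ the left side is all of $\ov{\mc{H}}_{\widetilde{\mc{X}}}^1$ and the right side is $F^1\ov{\mc{H}}^3_{\mc{G}(2,\mc{L})}$, but by Theorem \ref{tor17} (and \cite[Corollary 11.24]{pet}) we have $F^2H^3(\mc{G}(2,\mc{L})_\infty,\mb{C})=0$, so $F^1\ov{\mc{H}}^3_{\mc{G}(2,\mc{L})}$ has the same rank as $\ov{\mc{H}}_{\widetilde{\mc{X}}}^1$ and the inclusion is an equality; thus the cases $p=0,1$ are exactly the ones that make sense and they hold.

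Next I would treat the integral lattices. The isomorphism of local systems $\Phi_{\Delta^*}: \mb{H}^1_{\mc{X}_{\Delta^*}} \xrightarrow{\sim} \mb{H}^{3}_{\mc{G}(2,\mc{L})_{\Delta^*}}$ (coming from $c_2(\mc{U})^{1,3}$, which is an \emph{integral} class) pushes forward under $j$ to an isomorphism $j_*\mb{H}^1_{\mc{X}_{\Delta^*}} \xrightarrow{\sim} j_*\mb{H}^{3}_{\mc{G}(2,\mc{L})_{\Delta^*}}$, i.e.\ $\ov{\mb{H}}_{\widetilde{\mc{X}}}^1 \xrightarrow{\sim} \ov{\mb{H}}^3_{\mc{G}(2,\mc{L})}$, and this is compatible with $\widetilde\Phi$ via the inclusions $\ov{\mb{H}} \hookrightarrow \ov{\mc{H}}$ because compatibility holds over $\Delta^*$ and both sides are torsion-free sheaves on the normal base $\Delta$, so a section equality on the dense open $\Delta^*$ propagates. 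Hence $\widetilde\Phi(\ov{\mb{H}}_{\widetilde{\mc{X}}}^1)=\ov{\mb{H}}^3_{\mc{G}(2,\mc{L})}$.

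For the weight statement $\widetilde{\Phi}_0(W_iH^1(\widetilde{\mc{X}}_\infty,\mb{C}))=W_{i+2}H^3(\mc{G}(2,\mc{L})_\infty,\mb{C})$, I would argue that $\widetilde\Phi_0$ intertwines the two monodromy logarithms up to the shift. Indeed, $\Phi_{\Delta^*}$ commutes with the monodromy operators $T_{\widetilde{\mc{X}}_{\Delta^*}}$ and $T_{\mc{G}(2,\mc{L})_{\Delta^*}}$ on the local systems (it is an isomorphism of local systems), so the induced map on the canonical-extension fibers $\widetilde\Phi_0: H^1(\widetilde{\mc{X}}_\infty,\mb{C}) \xrightarrow{\sim} H^3(\mc{G}(2,\mc{L})_\infty,\mb{C})$ satisfies $\widetilde\Phi_0 \circ T_{\widetilde{\mc{X}}} = T_{\mc{G}(2,\mc{L})} \circ \widetilde\Phi_0$, hence also $\widetilde\Phi_0 \circ \log T_{\widetilde{\mc{X}}} = \log T_{\mc{G}(2,\mc{L})} \circ \widetilde\Phi_0$. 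By the uniqueness of the monodromy weight filtration relative to the weights $1$ and $3$ respectively (Remark \ref{tor25}), the filtration $W_{\bullet+2}H^3(\mc{G}(2,\mc{L})_\infty,\mb{Q})$ is the unique one with the defining properties (1)--(2) relative to weight $3$; transporting the defining properties of $W_\bullet H^1(\widetilde{\mc{X}}_\infty,\mb{Q})$ (centered at weight $1$) through $\widetilde\Phi_0$ yields a filtration on $H^3(\mc{G}(2,\mc{L})_\infty,\mb{Q})$ satisfying exactly those properties centered at weight $3$, so by uniqueness it equals $W_{\bullet}H^3(\mc{G}(2,\mc{L})_\infty,\mb{Q})$ with the index shifted by $2$; extending scalars to $\mb{C}$ gives the claim. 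The main obstacle here is purely bookkeeping: making sure the integral/rational isomorphism of local systems really does descend to the monodromy-invariant data on the canonical-extension fibers in the form \eqref{tor23}, and that the weight-filtration uniqueness is applied with the correct centering (weight $1$ versus weight $3$), which is why Theorem \ref{tor17}'s computation that both monodromies satisfy $(T-\mr{Id})^2=0$ is needed — it guarantees the weight filtrations have the simple three-step shape used implicitly above.
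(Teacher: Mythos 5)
Your proposal is correct, and for the weight--filtration statement it takes a genuinely different route from the paper. For the two assertions about $F^p$ and the integral lattices the paper simply says they follow from the construction, and your spelled-out version (apply the bundle isomorphism $\widetilde{\Phi}$, which agrees with $\Phi_{\Delta^*}$ over $\Delta^*$, to $j_*F^p\mc{H}^1_{\mc{X}_{\Delta^*}}$ and $j_*\mb{H}^1_{\mc{X}_{\Delta^*}}$ and intersect with the canonical extensions) is exactly that. For $\widetilde{\Phi}_0(W_iH^1(\widetilde{\mc{X}}_\infty,\mb{C}))=W_{i+2}H^3(\mc{G}(2,\mc{L})_\infty,\mb{C})$ the paper argues piece by piece: it uses the local invariant cycle sequences \eqref{t02}--\eqref{t03} together with Proposition \ref{t04} to identify $W_1H^1(\widetilde{\mc{X}}_\infty,\mb{Q})$ and $W_3H^3(\mc{G}(2,\mc{L})_\infty,\mb{Q})$ with the kernels of $T-\mr{Id}$ on either side, then matches $\mr{Gr}^W_2$ with $\mr{Gr}^W_4$ via the images of $T-\mr{Id}$ and $\mr{Gr}^W_0$ with $\mr{Gr}^W_2$ via $\log T$, and assembles the filtration from these identifications. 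You instead transport the two defining properties of the relative monodromy weight filtration (Remark \ref{tor25}) through the monodromy-equivariant isomorphism $\widetilde{\Phi}_0$ and invoke uniqueness of that filtration centered at weights $1$ and $3$. This is shorter, needs neither Proposition \ref{t04} nor the local invariant cycle theorem nor the dimension-at-most-one statements for the top graded pieces, and works for any order of nilpotency; in particular, contrary to your closing sentence, the computation $(T_{\widetilde{\mc{X}}}-\mr{Id})^2=0=(T_{\mc{G}(2,\mc{L})}-\mr{Id})^2$ from Theorem \ref{tor17} is not needed for your argument. The only bookkeeping point is rational versus complex coefficients, and it is harmless: either note that $\widetilde{\Phi}_0$ respects the rational structures because $c_2(\mc{U})^{1,3}$ is an integral class, or observe that the uniqueness characterization of the weight filtration holds just as well after tensoring with $\mb{C}$, which is all the stated claim requires.

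One small correction in your aside on the case $p=0$: the vanishing you quote, $F^2H^3(\mc{G}(2,\mc{L})_\infty,\mb{C})=0$, is false; the proof of Theorem \ref{tor17} gives only $F^3H^3(\mc{G}(2,\mc{L})_\infty,\mb{C})=0$. The fact you actually want is $F^1H^3(\mc{G}(2,\mc{L})_t,\mb{C})=H^3(\mc{G}(2,\mc{L})_t,\mb{C})$ for $t\in\Delta^*$ (rational connectedness gives $h^{3,0}=h^{0,3}=0$), which already over $\Delta^*$ yields $F^1\mc{H}^3_{\mc{G}(2,\mc{L})_{\Delta^*}}=\mc{H}^3_{\mc{G}(2,\mc{L})_{\Delta^*}}$ and hence $F^1\ov{\mc{H}}^3_{\mc{G}(2,\mc{L})}=\ov{\mc{H}}^3_{\mc{G}(2,\mc{L})}$. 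The slip does not affect the proof, since the case $p=0$ is already covered by your general intersection argument, which uses only the Mumford--Newstead relation over $\Delta^*$.
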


  \begin{proof}
   The proof of $\widetilde{\Phi}(F^p\ov{\mc{H}}_{\widetilde{\mc{X}}}^1)= F^{p+1}\ov{\mc{H}}^3_{\mc{G}(2,\mc{L})}$  for  $p=0,1$  and  
   $\widetilde{\Phi}(\ov{\mb{H}}_{\widetilde{\mc{X}}}^1)=\ov{\mb{H}}^3_{\mc{G}(2,\mc{L})}$, follows directly from construction. We now prove the second 
   statement.
   
   Since $c_2(\mc{U})^{1,3}$ is a (single-valued) global section of $\mb{H}^1_{\widetilde{\mc{X}}_{\Delta^*}} \otimes \mb{H}^{3}_{\mc{G}(2,\mc{L})_{\Delta^*}}$,
   it is monodromy invariant. In other words, using \eqref{ner08}, we have the following commutative diagram:
   \begin{equation}\label{ner03}
    \begin{diagram}
    \mb{H}^1_{\widetilde{\mc{X}}_{\Delta^*}}&\rTo^{\Phi_{\Delta^*}}_{\sim}&\mb{H}^{3}_{\mc{G}(2,\mc{L})_{\Delta^*}}\\
  \dTo^{T_{\widetilde{\mc{X}}_{\Delta^*}}}&\circlearrowleft&\dTo_{T_{\mc{G}(2,\mc{L})_{\Delta^*}}}\\
    \mb{H}^1_{\widetilde{\mc{X}}_{\Delta^*}}&\rTo^{\Phi_{\Delta^*}}_{\sim}&\mb{H}^{3}_{\mc{G}(2,\mc{L})_{\Delta^*}}
       \end{diagram}
   \end{equation}
 Note that the monodromy operators extend to the canonical extensions $\ov{\mc{H}}_{\widetilde{\mc{X}}}^1$ and $\ov{\mc{H}}^3_{\mc{G}(2,\mc{L})}$ 
 (\cite[Proposition $11.2$]{pet}). This gives rise to a commutative diagram similar to \eqref{ner03}, after substituting 
 $\mb{H}^1_{\widetilde{\mc{X}}_{\Delta^*}}$ and $\mb{H}^{3}_{\mc{G}(2,\mc{L})_{\Delta^*}}$ by 
 $\ov{\mc{H}}_{\widetilde{\mc{X}}}^1$ and $\ov{\mc{H}}^3_{\mc{G}(2,\mc{L})}$, respectively.
 Restricting this diagram to the central fiber, we obtain the following commutative diagram (use the  identification \eqref{tor23}):
 \begin{equation}\label{ner09}
  \begin{diagram}
    H^1(\widetilde{\mc{X}}_\infty,\mb{C})&\rTo^{\widetilde{\Phi}_0}_{\sim}&H^3(\mc{G}(2,\mc{L})_\infty,\mb{C})\\
    \dTo^{T_{\widetilde{\mc{X}}}}&\circlearrowleft&\dTo_{T_{\mc{G}(2,\mc{L})}}\\
       H^1(\widetilde{\mc{X}}_\infty,\mb{C})&\rTo^{\widetilde{\Phi}_0}_{\sim}&H^3(\mc{G}(2,\mc{L})_\infty,\mb{C})
   \end{diagram}
 \end{equation}
 The exact sequences \eqref{t02} and \eqref{t03} combined with Proposition \ref{t04} then implies:
 \[W_1H^1(\widetilde{\mc{X}}_\infty,\mb{C}) \cong \Ima \mr{sp}_1 \cong \ker (T_{\widetilde{\mc{X}}}-\mr{Id}) \stackrel{\widetilde{\Phi}_0}{\cong} \ker (T_{\mc{G}(2,\mc{L})}-\mr{Id}) \cong \Ima \mr{sp}_2\]
 which is isomorphic to $W_3H^3(\mc{G}(2,\mc{L})_\infty,\mb{C})$.
  This implies, \[\mr{Gr}^W_2 H^1(\widetilde{\mc{X}}_\infty,\mb{C}) \cong \Ima (T_{\widetilde{\mc{X}}}-\mr{Id}) \stackrel{\widetilde{\Phi}_0}{\cong} \Ima (T_{\mc{G}(2,\mc{L})}-\mr{Id}) \cong \mr{Gr}^W_4 H^3(\mc{G}(2,\mc{L})_\infty,\mb{C}).\]
 By \eqref{ner09}, it is easy to check that for all $k \ge 0$ and $\alpha \in H^1(\widetilde{\mc{X}}_\infty, \mb{C})$, we have 
 \[\widetilde{\Phi}_0(T_{\widetilde{\mc{X}}}-\mr{Id})^k(\alpha)=(T_{\mc{G}(2,\mc{L})}-\mr{Id})^k (\widetilde{\Phi}_0(\alpha)).\]
Using Remark \ref{tor25}, we then conclude 
{\small \[\mr{Gr}^W_0 H^1(\widetilde{\mc{X}}_\infty,\mb{C}) \cong \log T_{\widetilde{\mc{X}}} (\mr{Gr}^W_2 H^1(\widetilde{\mc{X}}_\infty,\mb{C}))
 \stackrel{\widetilde{\Phi}_0}{\cong} \log T_{\mc{G}(2,\mc{L})} (\mr{Gr}^W_4 H^3(\mc{G}(2,\mc{L})_\infty,\mb{C}))\]}
 which is isomorphic to $\mr{Gr}^W_2 H^3(\mc{G}(2,\mc{L})_\infty,\mb{C})$.
 By Proposition \ref{tor26}, it is easy to check that 
 $\mr{Gr}^W_0 H^1(\widetilde{\mc{X}}_\infty,\mb{C}) \cong W_0 H^1(\widetilde{\mc{X}}_\infty,\mb{C})$
  and  \[\mr{Gr}^W_2 H^3(\mc{G}(2,\mc{L})_\infty,\mb{C}) \cong W_2 H^3(\mc{G}(2,\mc{L})_\infty,\mb{C}).\]
 Moreover, by Proposition \ref{t04} we have \[W_2 H^1(\widetilde{\mc{X}}_\infty,\mb{C}) \cong H^1(\widetilde{\mc{X}}_\infty,\mb{C})
  \mbox{ and  } W_4 H^3(\mc{G}(2,\mc{L})_\infty,\mb{C}) \cong H^3(\mc{G}(2,\mc{L})_\infty,\mb{C}).\]
 Therefore for all $i$, we have $\widetilde{\Phi}_0(W_iH^1(\widetilde{\mc{X}}_\infty,\mb{C}))=W_{i+2}H^3(\mc{G}(2,\mc{L})_\infty,\mb{C})$.
 This proves the proposition.
 \end{proof}

   \begin{rem}\label{ner10}
    Using Proposition \ref{ner01} observe that the isomorphism ${\Phi}$ between the families of intermediate Jacobians extend to an isomorphism 
  \[\ov{\Phi}:  \ov{\mc{J}}^1_{\widetilde{\mc{X}}} \xrightarrow{\sim} \ov{\mc{J}}^2_{\mc{G}(2,\mc{L})}.\]
  By Corollary \ref{ner04} and Proposition \ref{ner01}, we conclude that the composition 
  {\small \[\Psi:J^1(X_0) \xrightarrow{\sim}  \left( \ov{\mbf{J}}^1_{\widetilde{\mc{X}}} \right)_0 \cong \ov{\mc{J}}^1_{\widetilde{\mc{X}}} \otimes k(0) \xrightarrow[\sim]{\ov{\Phi}}\ov{\mc{J}}^2_{\mc{G}(2,\mc{L})} \otimes k(0) \cong \left(\ov{\mbf{J}}^2_{\mc{G}(2,\mc{L})}\right)_0 \to J^2(\mc{G}_{X_0}(2,\mc{L}_0))\]}
  satisfies $\Psi(W_i J^1(X_0))=W_{i+2}J^2(\mc{G}_{X_0}(2,\mc{L}_0))$, where the first morphism is simply $\mr{sp}_1 \circ i_0^*$
  and the last morphism is $\mr{sp}_2^{-1}$.
   \end{rem}

  We prove that $\Psi$ is an isomorphism of semi-abelian varieties which induces an isomorphism of the associated
  abelian varieties.

 \begin{cor}\label{tor27}
 Denote by $\mr{Gr}^W_iJ^1(X_0):=W_iJ^1(X_0)/W_{i-1}J^1(X_0)$ and $\mr{Gr}^W_i J^2(\mc{G}_{X_0}(2,\mc{L}_0)):=W_iJ^2(\mc{G}_{X_0}(2,\mc{L}_0))/W_{i-1}J^2(\mc{G}_{X_0}(2,\mc{L}_0))$. Then,
 \begin{enumerate}
  \item $\mr{Gr}^W_1J^1(X_0)$ and $\mr{Gr}^W_3J^2(\mc{G}_{X_0}(2,\mc{L}_0))$ are 
  principally polarized abelian varieties,
   \item $J^1(X_0)$ and $J^2(\mc{G}_{X_0}(2,\mc{L}_0))$) are semi-abelian varieties. In particular, 
  $J^1(X_0)$ (resp. $J^2(\mc{G}_{X_0}(2,\mc{L}_0))$) is an extension of  $\mr{Gr}^W_1J^1(X_0)$
  (resp. $\mr{Gr}^W_3J^2(\mc{G}_{X_0}(2,\mc{L}_0))$ by the complex torus $W_0J^1(X_0)$ (resp. $W_1J^2(\mc{G}_{X_0}(2,\mc{L}_0))$), 
  \item $\Psi$ is an isomorphism of semi-abelian varieties sending $\mr{Gr}^W_1J^1(X_0)$ to  $\mr{Gr}^W_3J^2(\mc{G}_{X_0}(2,\mc{L}_0))$.
 \end{enumerate}
 \end{cor}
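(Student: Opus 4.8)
The plan is to transport the classical semi-abelian structure of the generalized Jacobian of the nodal curve $X_0$ across the isomorphism $\Psi$ of Remark \ref{ner10}, and then to check separately that the principal polarization survives. First I would analyse $J^1(X_0)$. By the normalization argument of Corollary \ref{ner04}(1) the mixed Hodge structure $H^1(X_0,\mb{Z})$ has weights $0$ and $1$: $W_0H^1(X_0,\mb{Z})$ is free of rank one and purely of type $(0,0)$ --- generated by the class dual to the vanishing cycle $\delta$ occurring in the proof of Theorem \ref{ner02} --- while $\mr{Gr}^W_1H^1(X_0,\mb{Z})$ is identified, through the normalization $\widetilde{X}_0\to X_0$, with the polarized weight-one Hodge structure $H^1(\widetilde{X}_0,\mb{Z})$ carrying the unimodular cup-product form of the smooth curve $\widetilde{X}_0$. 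A short diagram chase using strictness of morphisms of mixed Hodge structures then shows that $W_0H^1(X_0,\mb{C})\to J^1(X_0)$ is injective, so that $W_0J^1(X_0)\cong W_0H^1(X_0,\mb{C})/W_0H^1(X_0,\mb{Z})\cong\mb{C}^*$ is a one-dimensional torus, and that $J^1(X_0)/W_0J^1(X_0)$ is canonically $\mr{Gr}^W_1J^1(X_0)=J^1(\widetilde{X}_0)=\mr{Jac}(\widetilde{X}_0)$, a principally polarized abelian variety. Since an extension of an abelian variety by a torus is automatically algebraic, $J^1(X_0)$ is a semi-abelian variety (as one expects: it is Serre's generalized Jacobian of $X_0$, cf. \cite{nami}). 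This settles the $J^1$-parts of (1) and (2).

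Next I would carry this picture over to $J^2(\mc{G}_{X_0}(2,\mc{L}_0))$ by means of $\Psi$. By Remark \ref{ner10}, $\Psi\colon J^1(X_0)\xrightarrow{\sim}J^2(\mc{G}_{X_0}(2,\mc{L}_0))$ is an isomorphism of complex Lie groups with $\Psi(W_iJ^1(X_0))=W_{i+2}J^2(\mc{G}_{X_0}(2,\mc{L}_0))$ for all $i$, and by Proposition \ref{ner01} it is induced by $\widetilde{\Phi}_0$, which carries $F^p\ov{\mc{H}}^1_{\widetilde{\mc{X}}}$ to $F^{p+1}\ov{\mc{H}}^3_{\mc{G}(2,\mc{L})}$, maps $\ov{\mb{H}}^1_{\widetilde{\mc{X}}}$ onto $\ov{\mb{H}}^3_{\mc{G}(2,\mc{L})}$, and sends $W_\bullet$ to $W_{\bullet+2}$. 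Thus $\widetilde{\Phi}_0$ is an isomorphism of all the data (weight filtration, Hodge filtration, integral lattice) defining the two central fibres as generalized intermediate Jacobians; since $J^1(X_0)$ is a semi-abelian variety it follows at once that $J^2(\mc{G}_{X_0}(2,\mc{L}_0))$ is one too and that $\Psi$ is an isomorphism of semi-abelian varieties. In particular $\Psi$ carries the maximal subtorus $W_0J^1(X_0)\cong\mb{C}^*$ onto $W_2J^2(\mc{G}_{X_0}(2,\mc{L}_0))\cong\mb{C}^*$ --- which, since $\mc{G}_{X_0}(2,\mc{L}_0)$ has exactly two smooth components, is the lowest nonzero step of the weight filtration on $H^3$ and hence the maximal subtorus of $J^2(\mc{G}_{X_0}(2,\mc{L}_0))$ --- and therefore induces an isomorphism of complex tori $\mr{Gr}^W_1J^1(X_0)\xrightarrow{\sim}\mr{Gr}^W_3J^2(\mc{G}_{X_0}(2,\mc{L}_0))$ on the quotients. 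Consequently $\mr{Gr}^W_3J^2(\mc{G}_{X_0}(2,\mc{L}_0))$ is an abelian variety and $J^2(\mc{G}_{X_0}(2,\mc{L}_0))$ is the claimed extension, which gives (2) and the group-theoretic content of (3).

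Finally I would install the \emph{principal} polarizations. On $\mr{Gr}^W_1J^1(X_0)=\mr{Jac}(\widetilde{X}_0)$ it is the classical principal polarization. For $\mr{Gr}^W_3J^2(\mc{G}_{X_0}(2,\mc{L}_0))$ I would argue that over $\Delta^*$ the relative Mumford--Newstead isomorphism $\Phi_{\Delta^*}$ is, on every fibre, \emph{unimodular} with respect to the cup-product polarizations by \cite{mumn}; being an isomorphism of local systems that carries the (flat) polarization form on $\mb{H}^1_{\widetilde{\mc{X}}_{\Delta^*}}$ to that on $\mb{H}^3_{\mc{G}(2,\mc{L})_{\Delta^*}}$, it extends over $0$ and restricts to the central fibres and then to the weight-graded quotients, transporting the principal polarization of $\mr{Gr}^W_1J^1(X_0)$ to a principal polarization of $\mr{Gr}^W_3J^2(\mc{G}_{X_0}(2,\mc{L}_0))$ compatibly with the isomorphism induced by $\Psi$. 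I expect the genuinely delicate point to be precisely this last step: one must make sure that it is the \emph{unimodular} Mumford--Newstead form on $H^3$ of the smooth moduli fibres --- rather than the a priori imprimitive hard-Lefschetz form --- that is inherited in the limit, and that it restricts to a unimodular pairing on $\mr{Gr}^W_3$; here the non-divisibility of the vanishing cycle, already established in the proof of Theorem \ref{ner02}, is the crucial input, ensuring that the integral weight-graded pieces split off as unimodular summands.
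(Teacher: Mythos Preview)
Your approach is essentially the same as the paper's: analyse $J^1(X_0)$ via the normalization $\widetilde{X}_0\to X_0$ to obtain the extension $0\to\mb{C}^*\to J^1(X_0)\to J^1(\widetilde{X}_0)\to 0$, and then transport everything to $J^2(\mc{G}_{X_0}(2,\mc{L}_0))$ through $\Psi$ using Remark \ref{ner10} and Proposition \ref{ner01}. The paper carries out exactly these steps, with the exact sequence of mixed Hodge structures coming from \cite[Corollary-Definition 5.37]{pet} applied to the proper modification $\pi\colon\widetilde{X}_0\to X_0$.

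One comment on your final paragraph: you are working harder than necessary for the corollary as stated. Part (1) only asserts that $\mr{Gr}^W_3J^2(\mc{G}_{X_0}(2,\mc{L}_0))$ \emph{is} a principally polarized abelian variety, not that $\Psi$ respects any canonical polarization. Since $\Psi$ induces an isomorphism of complex tori $\mr{Gr}^W_1J^1(X_0)\xrightarrow{\sim}\mr{Gr}^W_3J^2(\mc{G}_{X_0}(2,\mc{L}_0))$ and the source is $J^1(\widetilde{X}_0)$, the target automatically admits a principal polarization (the pushforward of the theta polarization). The paper takes exactly this shortcut here. The compatibility you worry about --- that the cup-product polarizations on both sides correspond under $\Psi$ --- is indeed what one eventually needs for Torelli, but the paper defers that argument to the proof of Theorem \ref{tor30}, where it is handled in a single sentence (``cup-product commutes with pullback''). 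So your ``delicate point'' is real, but it belongs to the next theorem rather than to this corollary.
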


   \begin{proof}
 Applying \cite[Corollary-Definition $5.37$]{pet} to the proper modification $\pi:\widetilde{X}_0 \to X_0$, we obtain the following exact sequence 
of mixed Hodge structures:
 \[0 \to H^0(X_0,\mb{Z}) \to H^0(\widetilde{X}_0,\mb{Z}) \oplus H^0(x_0,\mb{Z}) \to H^0(x_1\oplus x_2,\mb{Z}) \to\]
 \[\to H^1(X_0,\mb{Z}) \to H^1(\widetilde{X}_0,\mb{Z}) \oplus H^1(x_0,\mb{Z}) \to 0,\]
where $\pi^{-1}(x_0):=\{x_1, x_2\}$ and surjection on the right follows from the fact that $H^1(x_1 \oplus x_2,\mb{Z})=0$.
Since $\mb{Z}=H^0(\widetilde{X}_0,\mb{Z})=H^0(X_0,\mb{Z})=H^0(x_i,\mb{Z})$ for $i=0,1,2$, we obtain the short exact sequence
\[0 \to \mb{Z} \xrightarrow{\delta_0} H^1(X_0,\mb{Z}) \xrightarrow{\pi^*} H^1(\widetilde{X}_0,\mb{Z}) \to 0\]
with $F^1H^1(X_0,\mb{C}) \stackrel{\pi^*}{\cong} F^1H^1(\widetilde{X}_0,\mb{C}) \cong H^1(\widetilde{X}_0,\mb{C})$, 
$W_0H^1(X_0,\mb{Z}) \stackrel{\delta_0}{\cong} \mb{Z}$ and $\mr{Gr}^W_1H^1(X_0,\mb{Z})  \stackrel{\pi^*}{\cong} \mr{Gr}^W_1H^1(\widetilde{X}_0,\mb{Z})=H^1(\widetilde{X}_0,\mb{Z})$
(as $\mb{Z} \subset H^0(x_1 \oplus x_2,\mb{Z})$ is pure of weight $0$
and $H^1(\widetilde{X}_0,\mb{Z})$ is pure of weight $1$). This induces an isomorphism 
{\small \[\mr{Gr}^W_1J^1(X_0) \cong \frac{\mr{Gr}^W_1H^1(X_0,\mb{C})}{F^1\mr{Gr}^W_1H^1(X_0,\mb{C})+\mr{Gr}^W_1H^1(X_0,\mb{Z})} \stackrel{\pi^*}{\cong}\frac{H^1(\widetilde{X}_0,\mb{C})}{F^1H^1(\widetilde{X}_0,\mb{C})+H^1(\widetilde{X}_0,\mb{Z})}\]}
which is isomorphic to $J^1(\widetilde{X}_0)$.
Hence, $\mr{Gr}^W_1J^1(X_0)$ is a principally polarized abelian variety.
By Proposition \ref{ner01}, we have \[\Psi(\mr{Gr}^W_1J^1(X_0))=\mr{Gr}^W_3J^2(\mc{G}_{X_0}(2,\mc{L}_0))\] (see Remark \ref{ner10}). Therefore,
$\mr{Gr}^W_3J^2(\mc{G}_{X_0}(2,\mc{L}_0))$ is also a principally polarized abelian variety. 
Note that $F^1\mr{Gr}^W_0H^1(X_0,\mb{C})=0$ as $\mr{Gr}^W_0H^1(X_0,\mb{Z}) \cong \mb{Z}$ ($\mr{Gr}^W_0H^1(X_0,\mb{Z})$ must be of Hodge type $(0,0)$). Therefore,
\[\mr{Gr}^W_0J^1(X_0) \cong \frac{\mr{Gr}^W_0H^1(X_0,\mb{C})}{F^1\mr{Gr}^W_0H^1(X_0,\mb{C})+\mr{Gr}^W_0H^1(X_0,\mb{Z})} \stackrel{\delta_0}{\cong} \frac{\mb{C}}{\mb{Z}} \xrightarrow[\sim]{\mr{exp}} \mb{C}^*.\]
Similarly as before, $\Psi(\mr{Gr}^W_0J^1(X_0))=\mr{Gr}^W_2J^2(\mc{G}_{X_0}(2,\mc{L}_0)) \cong \mb{C}^*$ (as $\Psi$ is $\mb{C}$-linear).
By \cite[Theorem $5.39$]{pet}, $H^1(X_0,\mb{C})$ (resp. $H^3(\mc{G}_{X_0}(2,\mc{L}_0))$) is of weight at most $1$ (resp. $3$). 
Therefore, $J^1(X_0)$ and $J^2(\mc{G}_{X_0}(2,\mc{L}_0))$ are isomorphic (via $\Psi$) semi-abelian varieties, inducing an isomorphism between the associated abelian varieties 
 $\mr{Gr}^W_1J^1(X_0)$ and $\mr{Gr}^W_3J^2(\mc{G}_{X_0}(2,\mc{L}_0))$. This proves the corollary.
\end{proof}

We now prove the Torelli theorem for irreducible nodal curves.

 \begin{thm}\label{tor30}
  Let $X_0$ and $X_1$ be projective, irreducible nodal curves of genus $g \ge 4$ with exactly one node such that the normalizations $\widetilde{X}_0$ and $\widetilde{X}_1$ are not 
  hyper-elliptic. Let $\mc{L}_0$ and $\mc{L}_1$ be invertible sheaves of odd degree on $X_0$ and $X_1$, respectively.
  If $\mc{G}_{X_0}(2,\mc{L}_0) \cong \mc{G}_{X_1}(2,\mc{L}_1)$ then $X_0 \cong X_1$.
 \end{thm}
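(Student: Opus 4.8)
The plan is to reduce, by choosing a one-parameter smoothing of each $X_i$ and invoking the relative Mumford--Newstead isomorphism of \S\ref{sec4}, to Namikawa's Torelli theorem for the generalized Jacobian of an irreducible nodal curve. First I would fix, for $i=0,1$, a flat projective family $\pi_1^{(i)}\colon\mc{X}^{(i)}\to\Delta$, regular, smooth over $\Delta^*$, with central fibre $X_i$: such a family exists because the node of $X_i$ admits a smoothing with local model $xy=t$, whose total space is regular. Since $H^2(X_i,\mo_{X_i})=0$, the sheaf $\mc{L}_i$ extends, after shrinking $\Delta$, to an invertible sheaf on $\mc{X}^{(i)}$ of odd relative degree. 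Thus each $X_i$ (together with $\mc{L}_i$) is the central fibre of a family as in \S\S\ref{sec3}--\ref{sec4}, with corresponding central Gieseker fibre $\mc{G}_{X_i}(2,\mc{L}_i)$.

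Applying Theorem \ref{tor35} and Corollaries \ref{ner04} and \ref{tor27} to these families yields, for $i=0,1$, an isomorphism of semi-abelian varieties $\Psi_i\colon J^1(X_i)\xrightarrow{\sim}J^2(\mc{G}_{X_i}(2,\mc{L}_i))$ carrying $W_\bullet$ to $W_{\bullet+2}$; it identifies the abelian part $\mr{Gr}^W_1 J^1(X_i)\cong J^1(\widetilde{X}_i)$, with its principal polarization, with $\mr{Gr}^W_3 J^2(\mc{G}_{X_i}(2,\mc{L}_i))$, and the toric part $W_0 J^1(X_i)\cong\mb{C}^*$ with $W_1 J^2(\mc{G}_{X_i}(2,\mc{L}_i))$. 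Recall moreover that a semi-abelian variety is determined by its abelian part, its toric part and an extension class, and that for $J^1(X_i)$ this class is $\pm[x^{(i)}_1-x^{(i)}_2]\in\Ext^1(J^1(\widetilde{X}_i),\mb{C}^*)\cong J^1(\widetilde{X}_i)$, where $x^{(i)}_1,x^{(i)}_2$ are the two preimages in $\widetilde{X}_i$ of the node of $X_i$; this class is intrinsic to the semi-abelian variety and hence preserved by any isomorphism of such.

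Next I would transport the hypothesis across. The isomorphism $\phi\colon\mc{G}_{X_0}(2,\mc{L}_0)\xrightarrow{\sim}\mc{G}_{X_1}(2,\mc{L}_1)$ induces, by functoriality of mixed Hodge theory, an isomorphism of the integral mixed Hodge structures on $H^3$, hence an isomorphism $J^2(\mc{G}_{X_0}(2,\mc{L}_0))\cong J^2(\mc{G}_{X_1}(2,\mc{L}_1))$ of semi-abelian varieties compatible with the weight filtrations; on the abelian parts it is an isomorphism of \emph{polarized} Hodge structures, because by Proposition \ref{tor26} the group $\mr{Gr}^W_3 H^3(\mc{G}_{X_i}(2,\mc{L}_i))$ is the kernel of the restriction map from $H^3$ of the two (smooth, projective) irreducible components of $\mc{G}_{X_i}(2,\mc{L}_i)$ to $H^3$ of their intersection, and $\phi$ carries components to components and intersections to intersections, so it preserves the polarization provided by this description. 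Composing with $\Psi_0$ and $\Psi_1$ gives an isomorphism $J^1(X_0)\cong J^1(X_1)$ of semi-abelian varieties which restricts to an isomorphism of principally polarized abelian varieties $J^1(\widetilde{X}_0)\cong J^1(\widetilde{X}_1)$ and respects the extension classes. By Namikawa's Torelli theorem an irreducible nodal curve with non-hyperelliptic normalization is determined by its generalized Jacobian with this much structure, so $X_0\cong X_1$. Concretely: classical Torelli upgrades the polarized isomorphism $J^1(\widetilde{X}_0)\cong J^1(\widetilde{X}_1)$ to $\psi\colon\widetilde{X}_0\xrightarrow{\sim}\widetilde{X}_1$, unique up to the $(-1)$ involution on Jacobians; the matching of extension classes gives $[\psi(x^{(0)}_1)-\psi(x^{(0)}_2)]=\pm[x^{(1)}_1-x^{(1)}_2]$; and since $\widetilde{X}_1$ is non-hyperelliptic of genus $g-1\ge 3$ the difference map $(p,q)\mapsto[p-q]$ is injective on $\widetilde{X}_1\times\widetilde{X}_1$ away from the diagonal, forcing $\{\psi(x^{(0)}_1),\psi(x^{(0)}_2)\}=\{x^{(1)}_1,x^{(1)}_2\}$, whence $\psi$ descends to $X_0\cong X_1$.

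The main obstacle is the polarization comparison in the previous paragraph: one must carry out every identification of \S\S\ref{sec3}--\ref{sec4} integrally and check that the relative Mumford--Newstead isomorphism (Proposition \ref{ner01}) is unimodular, so that $\phi$ genuinely induces an isomorphism of \emph{principally polarized} abelian varieties $J^1(\widetilde{X}_0)\cong J^1(\widetilde{X}_1)$ and not merely an isomorphism of rational Hodge structures; in particular one needs the polarization on $\mr{Gr}^W_3 H^3(\mc{G}_{X_i}(2,\mc{L}_i))$ used here to be pinned down intrinsically in terms of the variety $\mc{G}_{X_i}(2,\mc{L}_i)$, its irreducible components and the restriction maps among them.
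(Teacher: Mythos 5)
Your proposal takes essentially the same route as the paper's proof: smooth each $X_i$ over a disc, extend $\mc{L}_i$ using the vanishing of $H^2(\mo_{X_i})$, apply the relative Mumford--Newstead/N\'eron-model results (Corollary \ref{tor27}) to transport the isomorphism of Gieseker spaces into an isomorphism $J^1(X_0)\cong J^1(X_1)$ of semi-abelian varieties with polarized abelian parts, and conclude by Namikawa's Torelli theorem. The only differences are cosmetic: you unpack \cite[Proposition $9$]{nami} by hand (extension class $\pm[x_1-x_2]$ plus injectivity of the difference map on a non-hyperelliptic curve) where the paper simply cites it, and the polarization-compatibility that you flag as the main obstacle is treated just as briefly in the paper, which disposes of it with the remark that the polarizations come from the intersection form on $H^1$ and that cup-product commutes with pullback.
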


 \begin{proof}
   Since the genus of $X_0$ and $X_1$ is $g \ge 4$, the curves $X_0$ and $X_1$ are stable. As the moduli space of stable curves is
  complete, we get algebraic families 
  \[f_0:\mc{X}_0 \to \Delta \mbox{ and } f_1:\mc{X}_1 \to \Delta\]
  of curves with $\mc{X}_0$ and $\mc{X}_1$ regular, $f_0, f_1$ smooth over the punctured disc $\Delta^*$, $f_0^{-1}(0)=X_0$ and $f_1^{-1}(0)=X_1$.
  Recall, the obstruction to extending invertible sheaves from the central fiber to the entire family lies in $H^2(\mo_{X_0})$ and $H^2(\mo_{X_1})$, respectively (see \cite[Theorem $6.4$]{R3}).
  Since $X_0$ and $X_1$ are curves the obstruction vanishes. Thus, there exist invertible sheaves $\mc{M}_0$ and $\mc{M}_1$ on $\mc{X}_0$ and $\mc{X}_1$, respectively, such that 
  $\mc{M}_0|_{X_0} \cong \mc{L}_0$ and $\mc{M}_1|_{X_1} \cong \mc{L}_1$.
  Since $\mc{G}_{X_0}(2,\mc{L}_0) \cong \mc{G}_{X_1}(2,\mc{L}_1)$, Corollary \ref{tor27} implies that 
  \[J^1(X_0) \cong J^2(\mc{G}_{X_0}(2,\mc{L}_0)) \cong J^2(\mc{G}_{X_1}(2,\mc{L}_1)) \cong J^1(X_1) \mbox{ with }\] 
  $\mr{Gr}^W_1J^1(X_0) \cong \mr{Gr}^W_1J^1(X_1)$.
  Recall, the canonical polarization on $\mr{Gr}^W_1J^1(X_0)$ and  $\mr{Gr}^W_1J^1(X_1)$ is induced by the intersection form on $H^1(X_0)$ and $H^1(X_1)$, respectively. Since cup-product
  commutes with pullback, one can check that the isomorphism between $\mr{Gr}^W_1J^1(X_0)$ and  $\mr{Gr}^W_1J^1(X_1)$ sends the canonical polarization of one to the other.
 By \cite[Proposition $9$]{nami}, this implies $X_0 \cong X_1$. This proves the theorem.
 \end{proof}
 
\appendix
\section{Gieseker moduli space of stable sheaves}\label{sec2}

 In this section, we recall the (relative) Gieseker's moduli space as defined in 
 \cite{nagsesh} and with fixed determinant as in \cite{tha}.
 We observe that the later moduli space is regular, with central fiber a reduced simple normal crossings divisor.
 
\begin{note}
 Let $X_0$ be an irreducible nodal curve of genus $g \ge 2$ with exactly one node, say at the point $x_0$. 
 Denote by $\pi:\widetilde{X}_0 \to X_0$ the normalization of $X_0$ and 
$\{x_1,x_2\}:=\pi^{-1}(x_0)$.
Let $S$ be a smooth curve and $s_0 \in S$ a closed point.
 Let $\delta:\mc{X} \to S$ be flat family of projective curves with $\mc{X}$ regular, smooth over $S\backslash s_0$ and 
 $\mc{X}_{s_0} \cong X_0$. Fix a relative polarization
 $\mo_{\mc{X}}(1)$ on $\mc{X}$.
\end{note}

 Recall,  the Gieseker (relative Gieseker) moduli functors  $\widetilde{\mc{G}}_{X_0}(n,d)$ (resp. $\widetilde{\mc{G}}(n,d)$)
 corresponding to (families of) semi-stable sheaves on curves semi-stably equivalent to $X_0$ (resp. $\mc{X}$) 
 (see \cite[Definition $5$ and $7$]{nagsesh} for detailed definitions).

  Recall, there exists a fine moduli space  $M_{\mc{X}_t}(n,d)$ of semi-stable sheaves of rank $n$ and degree $d$ on $\mc{X}_t$ for $t \not= s_0$ (see \cite[Theorem $4.3.7$ and 
  Corollary $4.6.6$]{huy}). We know that the moduli functor $\widetilde{\mc{G}}(n,d)$ is representable by a scheme $\mc{G}(n,d)$ with every 
  fiber $\mc{G}(n,d)_t$ isomorphic to $M_{\mc{X}_t}(n,d)$ for $t \not= s_0$. More precisely, 
  
 \begin{thm}\label{tor16}
  The functor $\widetilde{\mc{G}}(n,d)$ (resp. $\widetilde{\mc{G}}_{X_0}(n,d)$) is representable by an open subscheme $\mc{G}(n,d)$ (resp. $\mc{G}_{X_0}(n,d)$) of the 
  $S$-scheme (resp. $k$-scheme) $\mr{Hilb}^{P_1}(\mc{X} \times_S \mr{Gr}(m,n)_S)$ (resp. $\mr{Hilb}^{P_1}(X_0 \times_k \mr{Gr}(m,n))$), for
  some Hilbert polynomial $P_1$. Furthermore, 
  \begin{enumerate}
   \item the closed fiber $\mc{G}(n,d)_{s_0}$ of $\mc{G}(n,d)$ over $s_0 \in S$ is irreducible, isomorphic to $\mc{G}_{X_0}(n,d)$ and is a (analytic)
   normal crossings divisor in $\mc{G}(n,d)$,
   \item for all $t \not= s_0$, the fiber $\mc{G}(n,d)_{S,t}$ is smooth and isomorphic to ${M}_{\mc{X}_t}(n,d)$,
   \item as a scheme over $k$, $\mc{G}(n,d)$ is regular.
  \end{enumerate}
 \end{thm}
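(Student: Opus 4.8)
The statement is essentially a recollection of the construction of Nagaraj--Seshadri \cite{nagsesh} (with the fixed-determinant refinements of \cite{tha} in view), so the plan is to assemble their results rather than to reprove them; here I indicate the key steps and where the genuine content lies.

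First I would carry out the standard rigidification. For the relative polarization $\mo_{\mc{X}}(1)$ and an integer $m \gg 0$, every sheaf $E$ of rank $n$ and degree $d$ occurring in the moduli problem --- a semi-stable sheaf on $\mc{X}_t$ for $t \neq s_0$, or a semi-stable sheaf on a curve semi-stably equivalent to $X_0$ --- satisfies $h^1(E(m)) = 0$ and $h^0(E(m)) = P_1(m)$ for one fixed polynomial $P_1$. Choosing a basis of $H^0(E(m))$ and recording the Gieseker datum produces a morphism of the underlying curve into the Grassmann bundle $\mr{Gr}(m,n)_S$ (resp. $\mr{Gr}(m,n)$), whose graph gives a point of $\mr{Hilb}^{P_1}(\mc{X} \times_S \mr{Gr}(m,n)_S)$ (resp. $\mr{Hilb}^{P_1}(X_0 \times_k \mr{Gr}(m,n))$). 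Following \cite{nagsesh}, the locus of such points is open and the quotient by the $\GL$-action that forgets the choice of basis represents the Gieseker functor; hence $\widetilde{\mc{G}}(n,d)$ (resp. $\widetilde{\mc{G}}_{X_0}(n,d)$) is represented by an open subscheme $\mc{G}(n,d)$ (resp. $\mc{G}_{X_0}(n,d)$) of the indicated Hilbert scheme, and restricting the $S$-scheme $\mc{G}(n,d)$ to $s_0$ identifies $\mc{G}(n,d)_{s_0}$ with $\mc{G}_{X_0}(n,d)$. This gives the first half of (1).

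For (2), over $S \setminus \{s_0\}$ the family $\mc{X}$ is smooth, so $\mc{X}_t$ is the unique curve in its semi-stable equivalence class and the Grassmann-bundle datum is already determined by $E$; thus $\widetilde{\mc{G}}(n,d)$ restricted there is the ordinary relative moduli functor of semi-stable sheaves of rank $n$ and degree $d$, represented by $M_{\mc{X}_t}(n,d)$ (\cite[Thm.~4.3.7, Cor.~4.6.6]{huy}). Smoothness of the fibre $M_{\mc{X}_t}(n,d)$ is then deformation theory on a smooth curve: the tangent and obstruction spaces at $[E]$ are $\Ext^1_{\mc{X}_t}(E,E)$ and $\Ext^2_{\mc{X}_t}(E,E)$, and the latter vanishes because $\mc{X}_t$ has dimension one (the same computation pins down the expected dimension).

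The heart of the theorem --- and the step I expect to be the main obstacle --- is the local analysis at the node behind the regularity assertion (3) and the normal-crossings assertion in (1). \'{E}tale-locally near $x_0$ the family $\mc{X} \to S$ is modelled by $\Spec k[[x,y,t]]/(xy - t) \to \Spec k[[t]]$, and a point of $\mc{G}(n,d)_{s_0}$ corresponds to a torsion-free sheaf on $X_0$ equipped with the Gieseker rigidification at the node. Analysing the deformation functor of this pair --- combining the local classification of torsion-free modules over the node with the smoothness, in the relevant range, of the Hilbert scheme of the Grassmann bundle --- one computes the complete local ring of $\mc{G}(n,d)$ and finds it regular, with the ideal of $\mc{G}(n,d)_{s_0}$ generated by a single element which analytically has the form $u_1 u_2$ in a regular system of parameters; away from the node the special fibre is smooth and cut out by $t = 0$, so $\mc{G}(n,d)_{s_0}$ is an (analytic) normal crossings divisor. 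I would import this computation directly from \cite{nagsesh}, and from \cite[\S 6]{tha} in the fixed-determinant case used in the body of the article, where it upgrades to a reduced simple normal crossings divisor; the description of $\mc{G}(n,d)_{s_0}$ and its components (in terms of the torsion of the sheaf at the node, using that the generic such sheaf is locally free) is likewise read off from \cite{nagsesh,tha}.
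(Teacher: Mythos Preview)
Your approach is essentially identical to the paper's: the paper's entire proof is the sentence ``See \cite[Proposition~8]{nagsesh} (or \cite[pp.~179]{gies} for the case $n=2$)'', and you are giving an expanded outline of exactly that reference. One small wrinkle in your write-up: in the Nagaraj--Seshadri formulation the functor $\widetilde{\mc{G}}(n,d)$ already carries the Grassmann-embedding datum, so it is represented directly by an open subscheme of the Hilbert scheme rather than by a $\GL$-quotient thereof; the quotient step belongs to the further passage to the ordinary moduli space, not to the representability assertion here.
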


 \begin{proof}
  See \cite[Proposition $8$]{nagsesh} (or \cite[pp. $179$]{gies} for the case $n=2$).
 \end{proof}

We now briefly recall the construction of the moduli space $\mc{G}_{X_0}(2,d)$. 
Let $M_{\widetilde{X}_0}(2,d)$ be the fine moduli space of rank $2$, degree $d$ stable bundles
  over $\widetilde{X}_0$. Consider the universal bundle $\mathcal{E}$ over $\widetilde{X}_0 \times M_{\widetilde{X}_0}(2,d)$. 
  Let \[\mathcal{E}_{x_1}:=\mathcal{E}|_{x_1\times M_{\widetilde{X}_0}(2,d)}
  \mbox{ and } \mathcal{E}_{x_2}:=\mathcal{E}|_{x_2\times M_{\widetilde{X}_0}(2,d)}.\] Consider the projective bundle 
  $S_1:=\mathbb{P}(\Hc(\mathcal{E}_{x_1},\mathcal{E}_{x_2})\oplus \mathcal{O}_{M_{\widetilde{X}_0}(2,d)})$
  over $M_{\widetilde{X}_0}(2,d)$. Denote by $0_s:=\bigcup\limits_{t \in M_{\widetilde{X}_0}(2,d)} 0_{s,t},$
  the zero section of the projective bundle $S_1$ over $M_{\widetilde{X}_0}(2,d)$, where $0_{s,t}:=[0,\lambda] \in S_{1,t}$,
  for $\lambda \in \mo_{M_{\widetilde{X}_0}(2,d)} \otimes k(t)$.
Consider the two sub-bundles of $S_1$, \[H_2:=\bigcup\limits_{t \in M_{\widetilde{X}_0}(2,d)} H_{2,t} \mbox{ and } D_1:=\bigcup\limits_{t \in M_{\widetilde{X}_0}(2,d)} D_{1,t},\] where 
  $H_{2,t} :=\{[\phi,0] \in S_{1,t} \mid \phi\in \Hc(\mathcal{E}_{x_1},\mathcal{E}_{x_2}) \otimes k(t), \phi \neq 0\}$ and 
  \[D_{1,t} :=\{[\phi,\lambda] \mid \phi \in  \Hc(\mathcal{E}_{x_1},\mathcal{E}_{x_2}) \otimes k(t),
  \lambda \in \mo_{M_{\widetilde{X}_0}(2,d)} \otimes k(t),\, \ker(\phi)\neq 0 \}.\]
 
  For any $t \in M_{\widetilde{X}_0}(2,d)$, the fiber $S_{1,t}$ of $S_1$ over $t$ is isomorphic to $\mb{P}(\mr{End}(\mb{C}^2) \oplus \mb{C})$, 
  after identifying $\mc{E}|_{x_1 \times \{t\}} \cong \mb{C}^2 \cong \mc{E}|_{x_2 \times \{t\}}.$ 
  Under this identification, we have $H_{2,t} \cong \mb{P}(\mr{End}(\mb{C}^2) \backslash \{0\}) \cong \mb{P}^3$ and 
  \[H_{2,t} \cap D_{1,t}=\{[M,0] \in \mb{P}(\mr{End}(\mb{C}^2) \oplus \mb{C}) | M \in \mr{End}(\mb{C}^2), \, \det(M)=0\}.\]
  It is easy to check that $H_{2,t} \cap D_{1,t}$ is isomorphic to $\mb{P}^1 \times \mb{P}^1$ (use the Segre embedding into $H_{2,t} \cong \mb{P}^3$).
  Denote by $Q=H_2 \cap D_1$ and $S_2:=\mr{Bl}_{0_s}(S_1)$. Let $\tilde{Q}$ be the strict transform of $Q$ in $S_2$ and $S_3:=\mr{Bl}_{\tilde{Q}} S_2$.
   Denote by $H_1$ (resp. $D_2$) the exceptional divisor of the blow-up $S_2 \to S_1$ (resp. $S_3 \to S_2$).
  Replace $H_2$ and $D_1$ by their strict transforms in $S_3$. 
   There is a natural (open) embedding 
\[\mr{GL}_2 \hookrightarrow \mb{P}(\mr{End}(\mb{C}^2) \oplus \mb{C}) \cong S_{1,t} \, \mbox{ defined by }  \mr{End}(\mb{C}^2) \ni M \mapsto [M,1].\]
Then, $S_{3,t}=\mr{Bl}_{\widetilde{Q}_t}(\mr{Bl}_{0_{s,t}} S_{1,t})$ is called the \emph{wonderful compactification} of $\mr{GL}_2$ (see \cite[Definition 3.3.1]{pezz}).
   Denote by $\ov{\mr{SL}}_2 \subset S_{1,t}$ the closure of $\mr{SL}_2$ in $S_{1,t}$ under the above mentioned embedding of $\mr{GL}_2 \hookrightarrow S_{1,t}$.
  Then, \[\ov{\mr{SL}}_2 \cong \{[M,\lambda] \in \mb{P}(\mr{End}(\mb{C}^2) \oplus \mb{C})| \det(M)=\lambda^2\}.\]
  Note that $\ov{\mr{SL}}_2$ is regular, $0_{s,t} \not\in \ov{\mr{SL}}_2$ and $Q_t=H_{2,t} \cap D_{1,t}$ is a divisor in $\ov{\mr{SL}}_2$.
  Thus the strict transform of $\ov{\mr{SL}}_2$ in $S_{3,t}$ is isomorphic to itself.

 \begin{prop}[\cite{gies}]\label{ner05}
 Denote by  ${\mc{G}}'_{X_0}(2,d)$ the normalization of $\mc{G}_{X_0}(2,d)$.
  There exist closed subschemes $Z \subset D_1 \cap D_2$ and $Z' \subset {\mc{G}}'_{X_0}(2,d)$ of codimension at least $g-1$ such that
  $S_3 \backslash Z \cong {\mc{G}}'_{X_0}(2,d) \backslash Z'$. 
 \end{prop}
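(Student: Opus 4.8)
The statement is essentially due to Gieseker \cite{gies}; the plan is to recover it by making the construction of $\mc{G}_{X_0}(2,d)$ explicit enough to read off a point-by-point dictionary between a dense open part of $S_3$ and a dense open part of the normalization ${\mc{G}}'_{X_0}(2,d)$, and then to bound the codimension of the loci where this dictionary breaks down.

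First I would establish the generic dictionary. Over a dense open locus, a geometric point of $\mc{G}_{X_0}(2,d)$ is a rank $2$, degree $d$ stable locally free sheaf on $X_0$ itself; pulling back along the normalization $\pi:\widetilde{X}_0\to X_0$ and recording the gluing of fibres at the two points lying over the node, such a sheaf is the same datum as a pair $(E,\sigma)$ with $E$ a rank $2$, degree $d$ bundle on $\widetilde{X}_0$ and $\sigma\in\mr{Isom}(E_{x_1},E_{x_2})$, two such pairs $(E,\sigma)$, $(E,\sigma')$ giving isomorphic sheaves on $X_0$ only if they differ by an automorphism of $E$ --- which, for stable $E$, is a scalar and acts trivially on $\sigma$. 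For stable $E$ this is precisely a point $t\in M_{\widetilde{X}_0}(2,d)$ together with a point of the open orbit $\GL_2\hookrightarrow S_{1,t}=\mb{P}(\Hc(E_{x_1},E_{x_2})\oplus\mb{C})$. Since the centres $0_s$ and $\widetilde{Q}$ of the two blow-ups lie in $D_1$, respectively in $H_2\cap D_1$, the open $\GL_2$-subbundle of $S_1$ is left untouched and coincides with the corresponding open subset of $S_3$, which matches the description above.

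Next I would analyze the boundary. A point of $S_3$ over $t$ lying in $H_2\cup D_1\cup D_2\cup H_1$ encodes degenerate gluing data --- a homomorphism $\phi:E_{x_1}\to E_{x_2}$ of rank $1$, together with the extra data carried by the exceptional divisors of the two blow-ups; following \cite{gies} one attaches to such a point a sheaf parametrized by $\mc{G}_{X_0}(2,d)$, supported on a semi-stable model of $X_0$, which for rank $2$ is either $X_0$ itself or the curve $\widetilde{X}_0\cup\mb{P}^1$ obtained by inserting a rational curve at the node. This assignment is an isomorphism onto its image except over a closed subset $Z\subset D_1\cap D_2$, where the gluing is too degenerate for the associated sheaf to satisfy the required semi-stability inequality. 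Conversely, starting from a sheaf $\mc{F}$ parametrized by $\mc{G}_{X_0}(2,d)$, one restricts the part of $\mc{F}$ supported on the $X_0$-component to $\widetilde{X}_0$ and records the transition data along the inserted $\mb{P}^1$; this recovers a point of $S_3$ precisely when the resulting bundle on $\widetilde{X}_0$ is stable, so that it determines a point of $M_{\widetilde{X}_0}(2,d)$. Let $Z'\subset{\mc{G}}'_{X_0}(2,d)$ be the locus where this bundle fails to be stable, or where $\mc{F}$ is too degenerate at the node. On the complements $S_3\setminus Z$ and ${\mc{G}}'_{X_0}(2,d)\setminus Z'$ the two assignments are algebraic and mutually inverse, which yields the asserted isomorphism.

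The main obstacle --- and the heart of Gieseker's argument --- is the codimension estimate $\codim Z\ge g-1$ and $\codim Z'\ge g-1$; this is also the only point where the genus enters. The locus $Z'$ forces the rank $2$ bundle $E$ on $\widetilde{X}_0$ (which has genus $g-1$) to possess a sub-line-bundle of degree $>d/2$; writing such an $E$ as an extension $0\to L\to E\to M\to 0$ and comparing the dimension of the family of these extensions with $\dim M_{\widetilde{X}_0}(2,d)=4g-7$ shows that imposing this condition drops the dimension by at least $g-1$, and $g\ge 2$ makes this positive; the remaining strata, which carry an extra $\mb{P}^1$, only increase the codimension. Transporting this bound through the forward construction of the previous paragraph gives $\codim Z\ge g-1$ as well. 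For the (somewhat delicate) parameter count I refer to \cite{gies}.
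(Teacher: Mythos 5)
The paper offers no argument of its own for this proposition—it simply cites Gieseker \cite[\S 8--10]{gies} (and \cite[Ch.~1, \S 5]{tha})—and your proposal ultimately does the same, deferring the decisive codimension $\ge g-1$ estimate to \cite{gies} after sketching the gluing-data dictionary between the open $\GL_2$-subbundle of $S_3$ and stable locally free sheaves on $X_0$. That sketch is consistent with Gieseker's construction (up to minor imprecisions, e.g.\ for rank $2$ the semistable models also include a chain of two rational curves, which is what the stratum $D_1\cap D_2$ accounts for), so your treatment is essentially the same as the paper's.
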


  \begin{proof}
   See \cite[$\S 8, 9$ and $\S10$ ]{gies} for a concrete description of $Z, Z'$ and proof. Also see \cite[Chapter $1$, \S $5$]{tha}.
  \end{proof}
  
 By Theorem \ref{tor16}, there exists an universal closed immersion \[\mc{Y} \hookrightarrow \mc{X} \times_S \mc{G}(2,d) \times_S \mr{Gr}(m,2)_S\]
  corresponding to the functor
  $\widetilde{\mc{G}}(2,d)$. This defines a flat family  $\mc{Y} \to \mc{G}(2,d)$ of curves. Denote by $\mc{U}$ the vector bundle on 
  $\mc{Y}$ obtained as the pull-back of the tautological quotient bundle of rank $2$ on $\mr{Gr}(m,2)_S$.
  
  \begin{note}\label{ner06}
  Let $\mc{L}$ be an invertible sheaf on $\mc{X}$ of relative odd degree $d$ i.e., for every $t \in S$, we have $\deg(\mc{L}|_{\mc{X}_t})=d$.
  Denote by $\mc{L}_0:=\mc{L}|_{\mc{X}_{s_0}}$. Consider the reduced family $\mc{G}(2,\mc{L}) \subset \mc{G}(2,d)$ such that the fiber 
  over $s \in S$ consists of all points $z_s \in \mc{G}(2,d)_s$ such that the corresponding vector bundle $\mc{U}_{z_s}$ satisfies the property
  $H^0(\det \mc{U}_{z_s}^\vee \otimes \mc{L}_s) \not=0$. By upper semi-continuity, $\mc{G}(2,\mc{L})$ is a closed subvariety of 
  $\mc{G}(2,d)$. Denote by $\mc{G}_{X_0}(2,\mc{L}_0):=\mc{G}(2,\mc{L})_{s_0}$ the fiber over $s_0$.
  \end{note}
  
  We now study the geometry of the subvariety $\mc{G}(2,\mc{L})$. 
  Denote by $\mc{Y}'_0$  
  the base-change to $S_3\backslash Z$ of the universal family of curves $\mc{Y}$ (over $\mc{G}(2,d)$) via the composition
  \[S_3\backslash Z \cong \mc{G}'_{X_0}(2,d)\backslash Z' \to \mc{G}'_{X_0}(2,d) \to \mc{G}_{X_0}(2,d) \to \mc{G}(2,d).\]
  Denote by $\mc{U}'_0$ the pullback to $\mc{Y}'_0$ the universal bundle $\mc{U}$ on $\mc{G}(2,d)$.
  Denote by $\mc{E}_{S_3}$ the pullback of the universal bundle $\mc{E}$
  on $\widetilde{X}_0 \times M_{\widetilde{X}_0}(2,d)$, by the natural morphism from $S_3$ to $M_{\widetilde{X}_0}(2,d)$.
  Denote by $\widetilde{\mc{L}}_0:=\pi^*\mc{L}_0$. Define, 
  {\small \[P_0:=\{s\in S_3 \mid (\mc{Y}'_0)_s \cong X_0 \, \mbox{ and } \, \det \mc{U}'_{0,s} \simeq \mc{L}_0\}\cup \{s\in D_1\cap D_2\mid \det \mc{E}_{S_3,s} \simeq \widetilde{\mc{L}}_0 \}, \]
  \[P_1:=\{s\in H_1 \mid \det \mc{E}_{S_3,s} \simeq \pi^*\mc{L}_0(x_2-x_1)\} \mbox{ and }\]
  \[P_2:=\{s\in H_2 \mid \det \mc{E}_{S_3,s} \simeq \widetilde{\mc{L}}_0(x_1-x_2)\}.\]  }
 Recall, there exists a fine moduli space  $M_{\mc{X}_t}(2,\mc{L}_t)$ of semi-stable sheaves of rank $2$ and determinant $\mc{L}_t$ on $\mc{X}_t$ for $t \not= s_0$ (see \cite[\S $3.3$]{langc}).
Observe that 
 \begin{prop}\label{tor13}
 The variety $P_0$ (resp. $P_1, P_2$) is a $\ov{\mr{SL}}_2$ (resp. $\p3$)-bundle over $M_{\widetilde{X}_0}(2,\widetilde{\mc{L}}_0)$.
 Each $P_i$ contains a natural $\mb{P}^1 \times \mb{P}^1$-bundle over $M_{\widetilde{X}_0}(2,\widetilde{\mc{L}}_0)$, 
 namely $P_0 \cap D_1 \cap D_2$, $P_1 \cap D_1$ and $P_2 \cap D_2$. The subvariety $Z \subset S_3$ (Proposition \ref{ner05})
 does not intersect $P_1$ or $P_2$.
   \end{prop}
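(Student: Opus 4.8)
The plan is to obtain all three assertions from the explicit description of $S_3$ as an iterated blow-up of the projective bundle $S_1\to M$, where we write $M:=M_{\widetilde{X}_0}(2,d)$, combined with the identification of $S_3\setminus Z$ with an open subset of the normalisation $\mc{G}'_{X_0}(2,d)$ furnished by Proposition~\ref{ner05}. First I would record that the fixed-determinant conditions defining $P_0,P_1,P_2$ merely cut down the base. Let $p\colon S_3\to S_2\to S_1\to M$ be the structure morphism. By construction $\mc{E}_{S_3}=(\mr{id}_{\widetilde{X}_0}\times p)^{*}\mc{E}$, so for any degree-$d$ line bundle $N$ on $\widetilde{X}_0$ the locus $\{s\in S_3:\det\mc{E}_{S_3,s}\simeq N\}$ is exactly $p^{-1}\bigl(M_{\widetilde{X}_0}(2,N)\bigr)$. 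Moreover $\mr{Pic}^{0}(\widetilde{X}_0)$ is divisible, so any two degree-$d$ line bundles differ by a square; tensoring by a square root of $N'\otimes N^{-1}$ gives an isomorphism $M_{\widetilde{X}_0}(2,N)\xrightarrow{\sim}M_{\widetilde{X}_0}(2,N')$, identifying the bases $M_{\widetilde{X}_0}\bigl(2,\pi^{*}\mc{L}_0(x_2-x_1)\bigr)$ and $M_{\widetilde{X}_0}\bigl(2,\widetilde{\mc{L}}_0(x_1-x_2)\bigr)$ appearing in $P_1,P_2$ with $M_{\widetilde{X}_0}(2,\widetilde{\mc{L}}_0)$.

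Next I would unwind the two blow-ups fibrewise over $t\in M$, where $S_{1,t}\cong\mb{P}(\mr{End}(\mb{C}^{2})\oplus\mb{C})=\mb{P}^{4}$. The hyperplane $H_{2,t}=\{[\phi,0]:\phi\neq0\}\cong\mb{P}^{3}$ avoids the first centre $0_{s,t}$ and meets the centre $\widetilde{Q}$ of the second blow-up (the transform of $Q_t=H_{2,t}\cap D_{1,t}\cong\mb{P}^{1}\times\mb{P}^{1}$) in a Cartier divisor, so $H_2$ survives unchanged into $S_3$; globally $H_2\cong\mb{P}\bigl(\Hc(\mc{E}_{x_1},\mc{E}_{x_2})\bigr)$ is a $\p3$-bundle over $M$. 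Likewise the exceptional divisor $H_1$ of $S_2\to S_1$ equals $\mb{P}\bigl(N_{0_s/S_1}\bigr)\cong\mb{P}\bigl(\Hc(\mc{E}_{x_1},\mc{E}_{x_2})\bigr)$, again a $\p3$-bundle over $M$, and it is disjoint from $\widetilde{Q}$, hence unchanged in $S_3$. Intersecting fibrewise: $H_1\cap D_1$ is the projectivised tangent cone of $D_{1,t}=\{\det\phi=0\}$ at $0_{s,t}$, namely $\{[\phi]\in\mb{P}(\mr{End}(\mb{C}^{2})):\det\phi=0\}\cong\mb{P}^{1}\times\mb{P}^{1}$; $H_2\cap D_2\cong\mb{P}\bigl(N_{Q_t/H_{2,t}}\bigr)\cong Q_t$; and $D_1\cap D_2$ is the copy of $\widetilde{Q}_t$ inside the strict transform of $D_{1,t}$, once more $\cong\mb{P}^{1}\times\mb{P}^{1}$. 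Thus each of $H_1\cap D_1$, $H_2\cap D_2$, $D_1\cap D_2$ is a $\mb{P}^{1}\times\mb{P}^{1}$-bundle over $M$, and intersecting with the fixed-determinant loci of the first step yields that $P_1,P_2$ are $\p3$-bundles and that $P_1\cap D_1$, $P_2\cap D_2$, $P_0\cap D_1\cap D_2$ are $\mb{P}^{1}\times\mb{P}^{1}$-bundles over $M_{\widetilde{X}_0}(2,\widetilde{\mc{L}}_0)$.

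For $P_0$ I would use that on $\widetilde{X}_0\times M_{\widetilde{X}_0}(2,\widetilde{\mc{L}}_0)$ the line bundle $\det\mc{E}$ is, by seesaw, of the form $\widetilde{\mc{L}}_0\boxtimes A$; hence $\mc{E}_{x_1}$ and $\mc{E}_{x_2}$ have determinant $A$ and the quadratic determinant map $\Hc(\mc{E}_{x_1},\mc{E}_{x_2})\to\mo_{M_{\widetilde{X}_0}(2,\widetilde{\mc{L}}_0)}$ is well defined, so the equation $\det\phi=\lambda^{2}$ carves out a $\s2$-subbundle of $S_1$ over $M_{\widetilde{X}_0}(2,\widetilde{\mc{L}}_0)$. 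Since $\s2$ avoids $0_s$ and contains $Q$ as a Cartier divisor, its strict transform in $S_3$ is a $\s2$-bundle over $M_{\widetilde{X}_0}(2,\widetilde{\mc{L}}_0)$, and by the description of the gluing data of a Gieseker bundle with determinant $\mc{L}_0$ (\cite{gies}; see also \cite{tha}) this strict transform is precisely $P_0$. Finally, for the last assertion: by Proposition~\ref{ner05} one has $Z\subset D_1\cap D_2$, and $D_1\cap D_2$ meets neither $H_1$ nor $H_2$. Indeed $D_2$ is disjoint from $H_1$ because $\widetilde{Q}$ is; and inside the $\mb{P}^{1}$-bundle $D_2\to\widetilde{Q}$ the two sections $D_1\cap D_2$ and $H_2\cap D_2$ correspond to the complementary line subbundles in the splitting $N_{\widetilde{Q}/S_2}\cong N_{\widetilde{Q}/D_1}\oplus N_{\widetilde{Q}/H_2}$, hence are disjoint. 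Since $P_1\subset H_1$ and $P_2\subset H_2$, this gives $Z\cap P_1=Z\cap P_2=\emptyset$.

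The step I expect to be the main obstacle is the bookkeeping of strict transforms through the two blow-ups — checking that $H_1$, $H_2$ and $\s2$ each survive unchanged into $S_3$ (each meeting the relevant centre either trivially or in a Cartier divisor), that the fibrewise intersections $H_1\cap D_1$, $H_2\cap D_2$, $D_1\cap D_2$ are the quadric surfaces claimed, and that the splitting of $N_{\widetilde{Q}/S_2}$ is as stated — together with the identification of $P_0$ with the \emph{entire} $\s2$-bundle (rather than a proper subvariety of it), which is where one must appeal directly to Gieseker's construction of $\mc{G}_{X_0}(2,\mc{L}_0)$.
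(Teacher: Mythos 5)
The paper does not actually prove Proposition \ref{tor13}: its ``proof'' is the single line ``See \cite[\S $6$]{tha}'', so the burden of the statement is delegated entirely to Thaddeus (and ultimately Gieseker's construction). Your proposal reconstructs what that reference is being asked to supply, and the reconstruction is sound: the observation that the determinant conditions only constrain the image in $M_{\widetilde{X}_0}(2,d)$, the identification of the various fixed-determinant bases with $M_{\widetilde{X}_0}(2,\widetilde{\mc{L}}_0)$ by tensoring with a square root of a degree-zero line bundle, the fibrewise bookkeeping through the two blow-ups (with $H_2$ untouched because it misses $0_s$ and meets $\widetilde{Q}$ in a divisor, $H_1$ untouched because $\widetilde{Q}\cap H_1=\emptyset$, and $H_1\cap D_1$, $H_2\cap D_2$, $D_1\cap D_2$ all quadric-surface bundles), and the disjointness argument $Z\subset D_1\cap D_2$, $D_1\cap D_2\cap(H_1\cup H_2)=\emptyset$ (transversality of $D_1$ and $H_2$ along $Q$ gives the splitting of $N_{\widetilde{Q}/S_2}$ and hence the two disjoint sections of $D_2$) all check out. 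Two small points of care: the seesaw argument really produces the locus $\{\det\phi=c\lambda^2\}$ for a constant $c$ determined by the gluing datum of $\mc{L}_0$ and the chosen isomorphism $\det\mc{E}_{x_1}\cong\det\mc{E}_{x_2}$ (harmless, since rescaling $\lambda$ identifies it with $\ov{\mr{SL}}_2$), and the identification of $P_0$ with the \emph{entire} strict transform of this $\ov{\mr{SL}}_2$-bundle -- including matching the boundary with the locus in $D_1\cap D_2$ appearing in the definition of $P_0$ -- is exactly the place where one must invoke Gieseker's description of limit bundles and their determinants; you flag this explicitly, and it is precisely the content the paper outsources to \cite{gies} and \cite[\S $6$]{tha}. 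So your route is consistent with, and fills in, the argument the paper only cites.
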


  \begin{proof}
    See \cite[\S $6$]{tha}.
  \end{proof}

Using this we have the following description of $\mc{G}(2,\mc{L})$:  
  \begin{thm}\label{tor15}
 The variety $\mc{G}(2,\mc{L})$ is regular and for all $t \not= s_0$, the
  fiber $\mc{G}(2,\mc{L})_t$ is isomorphic to $M_{\mc{X}_t}(2,\mc{L}_t)$. The fiber $\mc{G}_{X_0}(2,\mc{L}_0)$
  is a reduced simple normal crossings divisor in $\mc{G}(2,\mc{L})$, consisting of two irreducible components, say $\mc{G}_0$ and $\mc{G}_1$, with 
 one of the irreducible components isomorphic to $P_1$. Moreover, the intersection $\mc{G}_0 \cap \mc{G}_1$ is isomorphic to $P_1 \cap D_1$, 
 which is a $\mb{P}^1 \times \mb{P}^1$-bundle over $M_{\widetilde{X}_0}(2,\mc{L}_0)$.
   \end{thm}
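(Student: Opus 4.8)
The plan is to deduce the theorem from the explicit construction of Gieseker \cite{gies} and Thaddeus \cite{tha} recalled above, organised around the local model $S_3$ together with Propositions \ref{ner05} and \ref{tor13}. The assertion over $t\ne s_0$ is immediate: Theorem \ref{tor16}(2) identifies $\mc{G}(2,d)_t$ with $M_{\mc{X}_t}(2,d)$, and by Notation \ref{ner06} the subvariety $\mc{G}(2,\mc{L})_t$ is the fiber of the determinant morphism over $\mc{L}_t$, hence the fixed-determinant moduli space $M_{\mc{X}_t}(2,\mc{L}_t)$, which is smooth of dimension $3g-3$ (see \cite[\S 3.3]{langc}); in particular $\mc{G}(2,\mc{L})$ is smooth over $\Delta^*$.

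For the central fiber I would first identify $\mc{G}_{X_0}(2,\mc{L}_0)$ with a compactification of the fixed-determinant locus inside the local model. By Proposition \ref{ner05}, off a closed set of codimension $\ge g-1$ the normalization $\mc{G}'_{X_0}(2,d)$ is isomorphic to $S_3$, and under this identification the condition defining $\mc{G}(2,\mc{L})$ cuts out precisely $P_0 \cup P_1 \cup P_2$ — the loci of, respectively, semistable sheaves genuinely supported on $X_0$ with determinant $\mc{L}_0$, and the two boundary strata lying in the exceptional divisors $H_1$ and $H_2$ with the twisted determinants $\pi^*\mc{L}_0(x_2-x_1)$ and $\widetilde{\mc{L}}_0(x_1-x_2)$ appearing in the definition of $P_1$ and $P_2$. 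Proposition \ref{tor13} then supplies the geometry: $Z$ misses $P_1$ and $P_2$, each $P_i$ is a smooth bundle over the smooth variety $M_{\widetilde{X}_0}(2,\widetilde{\mc{L}}_0)$ with $P_1$ a $\p3$-bundle, and $P_1 \cap D_1$ is a $\mb{P}^1\times\mb{P}^1$-bundle. Running the blow-downs $S_3 \to S_2 \to S_1$ and the normalization map — which reabsorb $P_2$ and the relevant part of $P_0$ into a single stratum while leaving $P_1 \subset H_1$ as a separate irreducible piece — yields $\mc{G}_{X_0}(2,\mc{L}_0) = \mc{G}_0 \cup \mc{G}_1$ with $\mc{G}_1 \cong P_1$ and $\mc{G}_0 \cap \mc{G}_1 \cong P_1 \cap D_1$; the crossing is simple (not merely analytic) and transverse because $H_1$ and $D_1$ are smooth and meet transversally in $S_3$, and the divisor is reduced because these strata occur with multiplicity one.

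Regularity of the total space $\mc{G}(2,\mc{L})$ along the central fiber I would obtain by modelling it, analytically-locally, on the same iterated blow-up that produces $\mc{G}(2,d)$ but with the $\mr{GL}_2$-bundle $S_1$ replaced by the $\ov{\mr{SL}}_2$-bundle $P_0$ over $M_{\widetilde{X}_0}(2,\widetilde{\mc{L}}_0)$: since $\ov{\mr{SL}}_2$ is regular and the successive blow-up centres are smooth, the outcome is regular, and smoothness over $\Delta^*$ is the first paragraph. Equivalently, granting the previous paragraph, regularity follows because a flat one-parameter family whose special fiber is a reduced divisor with two smooth components meeting transversally has local equation $x_1 x_2 = t$ in a smooth ambient space.

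The genuinely delicate point — the main obstacle — is the interplay between the fixed-determinant condition and the two blow-ups: showing that intersecting $S_3$ with that condition gives precisely $P_0 \cup P_1 \cup P_2$ with the stated bundle structures, and globalising this across the excised locus $Z'$ of codimension $\ge g-1$. This is exactly the computation of \cite[\S 6]{tha} (resting on \cite[\S\S 8--10]{gies}), to which the proof ultimately appeals; the hypothesis $g \ge 2$ enters precisely to force $\codim Z' \ge 1$ — so that the structure established on the dense open $S_3 \setminus Z$, together with flatness and equidimensionality of $\mc{G}(2,\mc{L})$ over $\Delta$, propagates — and to guarantee that the boundary components $P_1, P_2$ lie wholly in $S_3 \setminus Z$, where the identification with $\mc{G}'_{X_0}(2,d)$ is available.
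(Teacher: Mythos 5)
Your overall framing matches the paper, which in fact offers no independent argument for Theorem \ref{tor15}: its proof is the citation to \cite[\S 6]{tha} and \cite[\S 5, 6]{abe}, and your proposal likewise defers the substantive computation (that the fixed-determinant condition cuts out $P_0\cup P_1\cup P_2$ in the local model, with the bundle structures of Proposition \ref{tor13}) to exactly those sources. The treatment of $t\ne s_0$ via Theorem \ref{tor16} and Notation \ref{ner06} is fine. However, two of the bridging arguments you interpose are not correct as stated. First, the passage from the three strata $P_0, P_1, P_2$ to the two components $\mc{G}_0,\mc{G}_1$ is not effected by ``running the blow-downs $S_3\to S_2\to S_1$'': the Gieseker fiber is not obtained by contracting the local model. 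What relates $S_3$ to $\mc{G}_{X_0}(2,d)$ is Proposition \ref{ner05}, i.e.\ $S_3\setminus Z$ is an open piece of the \emph{normalization} $\mc{G}'_{X_0}(2,d)$, and it is the normalization map $\mc{G}'_{X_0}(2,d)\to\mc{G}_{X_0}(2,d)$ that performs the gluing; under it the boundary loci in $H_1$ and $H_2$ are identified (note the determinant twists $\pi^*\mc{L}_0(x_2-x_1)$ and $\widetilde{\mc{L}}_0(x_1-x_2)$ defining $P_1$ and $P_2$ are interchanged by swapping $x_1$ and $x_2$), so that $P_1$ and $P_2$ have the \emph{same} image $\mc{G}_1$, while $P_0$ maps onto $\mc{G}_0$. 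Your bookkeeping, in which $P_2$ is ``reabsorbed with the relevant part of $P_0$ into a single stratum'' by blow-downs, misidentifies this mechanism, and without the correct identifications you cannot conclude the component count, $\mc{G}_1\cong P_1$, or $\mc{G}_0\cap\mc{G}_1\cong P_1\cap D_1$; these identifications are precisely what \cite[\S 6]{tha} and \cite{abe} establish.

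Second, your fallback argument for regularity is false: it is not true that a flat one-parameter family whose special fiber is a reduced divisor with two smooth components meeting transversally must have regular total space with local equation $x_1x_2=t$ — the family $xy=t^2$ has exactly such a special fiber but an $A_1$-singular total space. So regularity of $\mc{G}(2,\mc{L})$ cannot be deduced from the description of its central fiber; logically it goes the other way, and it is the explicit local (deformation-theoretic) analysis along the central fiber in \cite[\S 6]{tha}, \cite{abe} that produces the local model $xy=t$, giving simultaneously regularity of the total space and the reduced simple normal crossings structure of $\mc{G}_{X_0}(2,\mc{L}_0)$. Your first suggestion — modelling the total space on the iterated blow-up with $\ov{\mr{SL}}_2$ in place of $\mr{GL}_2$ — describes at best (an open part of a component of) the central fiber, which has dimension one less than $\mc{G}(2,\mc{L})$, so it cannot substitute for that local computation either.
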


\begin{proof}
  For proof see \cite[\S $6$]{tha} or \cite[\S $5$ and \S $6$]{abe}. 
 \end{proof}
 
 \begin{rem}
  Note that, for any $t \not= s_0$, $\mc{G}(2,\mc{L})_t \cong M_{\mc{X}_t}(2,\mc{L}_t)$ is non-singular (\cite[Corollary $4.5.5$]{huy}).
  Therefore, $\mc{G}(2,\mc{L})$ is smooth over the punctured disc $S \backslash \{s_0\}$.
 \end{rem}

\end{document}